\documentclass[a4paper,11pt]{amsart}
\pdfoutput=1

\usepackage[utf8]{inputenc}
\usepackage[T1]{fontenc}
\usepackage[
  text={445pt,575pt},
  headheight=9pt,
  centering
]{geometry}

\usepackage{microtype}
\usepackage{amsfonts,amssymb}
\usepackage{amsthm,amsmath}
\usepackage{xspace}
\usepackage[skip=.25\baselineskip]{parskip}
\usepackage{mathtools}
\usepackage{textcomp}
\usepackage{enumitem}
\usepackage{tikz-cd}
\usepackage{bbm}
\makeatletter
\g@addto@macro\bfseries{\boldmath}
\makeatother
\usepackage[frozencache, cachedir=minted]{minted}
\setminted[macaulay2]{mathescape=true}

\usepackage{todonotes}
\setlength{\marginparwidth}{2.3cm}
\makeatletter
\newcommand{\DefineTodoColor}[2]{%
  \define@key{todonotes}{#1}[]{\gdef\@todonotes@currentbackgroundcolor{#2}}%
}
\makeatother

\DefineTodoColor{Thomas}{red!20}
\DefineTodoColor{Frank}{blue!20}
\DefineTodoColor{Andreas}{green!20}
\DefineTodoColor{Tobias}{yellow!20}
\newlist{paraenum}{enumerate}{1}
\setlist[paraenum]{wide, label=(\arabic*)}
\newlist{inparaenum}{enumerate*}{1}
\setlist[inparaenum]{label=(\arabic*)}
\setlist[itemize]{leftmargin=4em,rightmargin=4.5em,label=---}
\setlist[enumerate]{leftmargin=4em,rightmargin=4.5em}
\setlist[description]{leftmargin=4em,labelindent=4em,rightmargin=4.5em}

\usepackage{nicematrix}
\NiceMatrixOptions{
  code-for-first-row = \scriptstyle,
  code-for-last-col  = \scriptstyle
}

\usepackage{graphicx}
\usepackage{url}
\usepackage{hyperref}
\usepackage[sort, capitalize]{cleveref}
\usepackage{xcolor}

\definecolor{darkred}{RGB}{160,0,0}
\definecolor{darkblue}{RGB}{0,0,160}
\hypersetup{
  colorlinks,
  citecolor=darkblue,
  filecolor=black,
  linkcolor=darkblue,
  urlcolor=darkblue
}
\theoremstyle{definition}
\newtheorem{theorem}{Theorem}[section]
\newtheorem{theoremAlph}{Theorem}

\newtheorem*{theorem*}{Theorem}

\newtheorem{lemma}[theorem]{Lemma}
\newtheorem{proposition}[theorem]{Proposition}
\newtheorem{corollary}[theorem]{Corollary}
\newtheorem{remark}[theorem]{Remark}

\newtheorem*{fact*}{Fact}
\newtheorem{example}[theorem]{Example}
\newtheorem{definition}[theorem]{Definition}

\newtheorem{conjecture}[theorem]{Conjecture}

\newtheorem{qu}[theorem]{Question}

\newcounter{q48}
\setcounter{q48}{47}

\newcommand\defas{\coloneqq}

\newcommand\set[1]{\left\{#1\right\}}

\newcommand{\A}{\mathcal{A}}
\newcommand{\C}[1]{\mathcal{#1}}

\newcommand{\B}[1]{\mathbb{#1}}

\newcommand{\RR}{\B R}
\newcommand{\CC}{\B C}

\newcommand{\M}{\C M}

\newcommand{\PD}{\mathrm{PD}}
\newcommand{\GL}{\mathrm{GL}}

\newcommand{\Matus}{Mat{\'u}{\v{s}}\xspace}

\DeclareMathOperator{\adj}{adj}
\DeclareMathOperator{\tr}{tr}
\DeclareMathOperator{\rk}{rk}

\DeclareMathOperator{\Sym}{Sym}
\DeclareMathOperator{\codim}{codim}

\DeclareMathOperator{\edim}{edim}
\DeclareMathOperator{\CI}{CI}
\DeclareMathOperator{\SCI}{SCI}
\DeclareMathOperator{\sgn}{sgn}
\DeclareMathOperator{\Span}{span}
\DeclareMathOperator{\inv}{inv}
\DeclareMathOperator{\id}{id}

\makeatletter
\newsavebox{\@brx}
\newcommand{\llangle}[1][]{\savebox{\@brx}{\(\m@th{#1\langle}\)}%
  \mathopen{\copy\@brx\mkern2mu\kern-0.9\wd\@brx\usebox{\@brx}}}
\newcommand{\rrangle}[1][]{\savebox{\@brx}{\(\m@th{#1\rangle}\)}%
  \mathclose{\copy\@brx\mkern2mu\kern-0.9\wd\@brx\usebox{\@brx}}}
\makeatother

\newcommand{\CIS}[1]{\llangle #1\rrangle}
\newcommand{\ideal}[1]{\langle #1\rangle}
\newcommand{\dual}{\rceil}
\newcommand{\comp}{\mathsf{c}}
\newcommand{\ol}[1]{\overline{#1}}

\usepackage{listofitems}
\newcommand\Marg[1]{%
  \readlist*\mylist{#1}%
  \mylist[1] \mathbin{\backslash} \mylist[2]%
}
\newcommand\Cond[1]{%
  \readlist*\mylist{#1}%
  \mylist[1] \mathbin{/} \mylist[2]%
}
\newcommand\Perp{\protect\mathpalette{\protect\independenT}{\perp}}
\def\independenT#1#2{\mathrel{\rlap{$#1#2$}\mkern2mu{#1#2}}}
\newcommand{\ind}[2]{\left.#1 \Perp #2 \inD}
\newcommand{\inD}[1][\relax]{\def\argone{#1}\def\temprelax{\relax}
  \ifx\argone\temprelax\right.\else\,\middle|#1\right.{}\fi}

\usepackage{ifthen}
\usepackage{xparse}
\usepackage{tikz}
\usetikzlibrary{backgrounds}
\usetikzlibrary{arrows.meta}
\newcommand{\graphedgestyle}[1]{\tikzset{#1/.style={draw=black}}}
\NewDocumentCommand{\graph}{m >{\SplitList{,}}m O{scale=0.4}}{%
\begin{tikzpicture}[
  anchor = base, baseline,
  Eij/.style = {draw=none},
  Eik/.style = {draw=none},
  Eil/.style = {draw=none},
  Ejk/.style = {draw=none},
  Ejl/.style = {draw=none},
  Ekl/.style = {draw=none},
  #3
]

\tikzset{every node/.style={draw,shape=circle,fill=black,minimum size=2pt,inner sep=0}}
\tikzset{every edge/.style={line width=1pt}}
\ProcessList{#2}{\graphedgestyle}

\ifthenelse{#1 > 1}{
\node (Ni) at (0, 1) {};
\node (Nj) at (1, 1) {};

\path (Ni) edge [Eij] (Nj);
}{}

\ifthenelse{#1 > 2}{
\node (Nk) at (1, 0) {};

\path (Ni) edge [Eik] (Nk);
\path (Nj) edge [Ejk] (Nk);
}{}

\ifthenelse{#1 > 3}{
\node (Nl) at (0, 0) {};

\path (Ni) edge [Eil] (Nl);
\path (Nj) edge [Ejl] (Nl);
\path (Nk) edge [Ekl] (Nl);
}{}

\end{tikzpicture}
}

\allowdisplaybreaks

\title{The geometry of Gaussian double Markovian distributions}
\author{Tobias Boege, Thomas Kahle, Andreas Kretschmer and Frank Röttger}
\date{\today}

\keywords{Gaussian, conditional independence, normal distribution,
graphical models, model geometry, smoothness}
\makeatletter
\@namedef{subjclassname@2020}{\textup{2020} Mathematics Subject Classification}
\makeatother
\subjclass[2020]{Primary: 62R01; Secondary: 13C70, 13P25, 14P10, 14P25, 62H22}
\begin{document}

\begin{abstract}
  Gaussian double Markovian models consist of covariance matrices
  constrained by a pair of graphs specifying zeros simultaneously in
  the matrix and its inverse. We study the semi-algebraic
  geometry of these models, in particular their dimension, smoothness
  and connectedness as well as algebraic and combinatorial properties.
\end{abstract}

\maketitle

\section{Introduction}
Let $N \coloneqq \set{1, \ldots, n}$ and let $G$ and $H$ be two
undirected, simple graphs on the vertex set $N$. Denote by $\PD_n$ the
set of real symmetric positive definite $n \times n$-matrices. In this
paper, we study the following statistical models.

\begin{definition}\label{def:doubleMarkovModel}
  The \emph{Gaussian double Markovian model of $G$ and $H$} is
  \[
    \M(G,H) \coloneqq \set{ \Sigma \in \PD_n : \left(\Sigma^{-1}\right)_{ij} = 0 \text{ for all } ij \not\in G, \ \Sigma_{kl} = 0 \text{ for all } kl \not\in H }.
  \]
\end{definition}
Denoting the complete graph on $N$ by $K_N$, ordinary undirected
Gaussian graphical models are contained in
\cref{def:doubleMarkovModel} as $\M(G) \coloneqq \M(G,K_N)$.
Covariance models are contained as $\M(K_N,H)$, so both model classes
are unified and generalized here.

\subsection*{Conditional independence and graphical modeling}

Conditional independence (CI) constraints are a central tool in
mathematical modeling of random events.  For random variables
$X_{1},\dots, X_{n}$, basic conditional independence statements
stipulate that one random variable $X_{i}$ is independent of another
$X_{j}$ given a collection of remaining variables $(X_{k})_{k\in K}$
where $i,j \in \set{1,\dots, n}$ and
$K\subseteq \set{1,\dots, n}\setminus \set{i,j}$.  CI~constraints for
discrete and normally distributed random variables translate into
algebraic conditions on elementary probabilities in the discrete case
and on covariance matrices in the Gaussian case.  Algebraic statistics
aims to understand algebraic and geometric properties of conditional
independence models and to relate them to properties of statistical
inference procedures.

In this paper we exclusively treat Gaussian random variables, i.e., we
assume that $(X_{1},\dots, X_{n})$ has a multivariate normal
distribution with positive definite covariance matrix
$\Sigma \in \PD_{n}$.  Since the theory of CI is insensitive to the
mean, we may restrict to centered distributions.

In graphical modeling, edges and paths represent correlation or
interaction and, conversely, notions of disconnectedness represent
independence.  Double Markovian models are conditional independence
models whose CI statements are of the following two forms: either
$\ind{X_{i}}{X_{j}}$ (for each non-edge $ij$ of~$H$) or
$\ind{X_{i}}{X_{j}}[X_{N\setminus \set{i,j}}]$ (for each non-edge $ij$
of~$G$).  Pairwise conditional independence as in the second type is
common in graphical modeling \cite{lauritzen1996graphical}.  The
resulting \emph{Gaussian graphical model} of a simple undirected graph
$G$ on $N$ with edge~set~$E_{G}$~is
\[
\M(G) = \M(G, K_N) = \set{ \Sigma \in \PD_{n} : (\Sigma^{-1})_{ij} = 0
	\text{ for all } ij \notin G}.
\]
In words, the non-edges of $G$ specify zeros of the inverse covariance
matrix $K = \Sigma^{-1}$, also called the \emph{concentration matrix}.
Gaussian graphical models first appeared in \cite{SK86},
and~\cite{uhlerSurvey} is a modern survey containing many connections
to e.g.\ optimization and matrix completion.  Marginal independence
constraints (as in the first case) also appear, for example with
bidirected~\cite{DR2002} or dashed graphs \cite{CW93}.  These models
$ \M(K_N,H) $ encode marginal independence constraints and are also
known as covariance graph models
\cite{kauermann96,lauritzen2020locally}.

A model similar to double Markovian models appeared recently in
\cite[Example~3.4]{lauritzen2020locally}, where
the authors consider graphical models with some entries of $K$ zero
and complementary entries of $\Sigma$ nonnegative.  They investigate
efficient estimation procedures.  These models go back to work of
Kauermann \cite{BR2003,kauermann96}.  One way to describe them
is via mixed parametrizations: the regular exponential family of all
multivariate mean-zero Gaussians can be parametrized by the mean
parameter $\Sigma = (\sigma_{ij})$ or the natural parameter
$K = (k_{st})$.  One can also employ a mixed parametrization, using
$\sigma_{ij}$ and $k_{st}$ for $ij\in A$ and $st\in B$, where
$A\dot\cup B$ is a partition of the entries of an
$n\times n$-symmetric matrix.  Double Markovian models with
$G \cup H = K_{N}$ arise from imposing zeros in the mixed
parametrization.  In general they do not form regular exponential
families, though.  In the terminology of \cite{lauritzen2020locally},
linear constraints on mixed parameters define \emph{mixed linear
	exponential families}.  Such models also appear in causality
theory~\cite{pearl1994can}.

We study geometric properties of statistical models since they can
imply favorable statistical properties.
The asymptotic behavior of $M$-estimators like the MLE depends on
properties of tangent cones that go under the name \emph{Chernoff
  regularity} in~\cite{geyer1994asymptotics}.  Drton has shown that
the nature of singularities determines the large sample asymptotics of
likelihood ratio tests~\cite{drton2009likelihood}.  Smoothness of
log-linear models for discrete random variables has been studied by
examining the parametrization by marginals
\cite{EvansSmoothness,ForcinaSmoothness}.  Generally, smoothness is
favorable because estimation procedures using analytical techniques
like gradient descent rely on it.

In several occasions, for example in
\cref{corollary:coordinate_planes} geometric niceness results follow
because either $\M(G,H)$ or its inverse $\M(H,G)$ is an ordinary
graphical model and thus irreducible, connected, and smooth.  In these
cases one has found an effective new parametrization of $\M(G,H)$.
This theme has occurred in the literature.  For example
\cite{drton08a} asks for a Markov equivalent directed and undirected
graph to the bidirected (i.e.\ covariance) graph.  Our geometric
niceness results, however, go beyond recognizing disguised graphical
models.

A systematic analysis of smoothness of Gaussian CI models has been
initiated in \cite{drton2010smoothness}.
That paper treats the $n=4$ case in detail.  It relies on similar
algebraic techniques as we do here, but also on the characterization
of realizable $4$-gaussoids from~\cite{LM07}.  We deal with a smaller
class of models here, but achieve results independent of the number of
random variables, aiming to understand how the geometry of $\M(G,H)$
depends on $G$ and~$H$.  In particular, we are interested in
dimension, smoothness, irreducible decompositions and other basic
geometric facts that seem useful and interesting for inference
methodology.

Algebraically, a double Markovian model $\M(G,H)$ is the vanishing set
inside $\PD_n$ of an ideal generated by some entries of
$\Sigma \in \PD_{n}$ and some more of its inverse.  The latter is an
algebraic condition as it can be encoded as the vanishing of
submaximal minors of~$\Sigma$.  This puts us broadly in the framework
of sparse determinantal ideals, see \cref{def_CI_ideal} for the
concrete class of ideals we are concerned with.  The sparsity is
twofold in our setting: our ideals are generated by only \emph{some}
minors of a \emph{sparse} generic symmetric matrix, i.e., a symmetric
matrix whose entries in the upper triangle are either distinct
variables or zero.  To our knowledge, no systematic study of these
ideals has been carried out, even in the case of submaximal
minors~only.  Minors of symmetric matrices are a classical topic in
commutative algebra, see for
example~\cite{conca1994symmetric,conca2019}.  Our results, in
particular \cref{theorem:CI_ideal}, can be viewed as a further step
towards the study of this class of sparse determinantal ideals.

\medskip

We illustrate our results on a
simple preliminary example.  \cref{sec:manyExamples} contains further
examples.

\begin{example}\label{example:smooth_non_complete}
	Let $G = \graph4{Eij,Eik,Eil}$ be a star with edge set
	$\set{12, 13, 14}$ and $H = \graph4{Eij,Ejk,Ekl}$ a path with edge
	set $\set{12, 23, 34}$.  To study the model~$\M(G,H)$, consider two
	indeterminate symmetric $4\times 4$-matrices
	$\Sigma = (\sigma_{ij}), K = (k_{ij})$ representing covariance and
	concentration matrices.  The non-edges of $G$ dictate the zeros of
	$K$ and the non-edges of $H$ those of~$\Sigma$.  Algebraically
	(which means ignoring the positive definiteness for a moment), the
	model is specified by the equations
	\begin{equation}\label{eq:exampleEqs}
	\Sigma K = \mathbbm1_{4}, \quad k_{23} = k_{24} = k_{34} =
	\sigma_{13} = \sigma_{14} = \sigma_{24} = 0.
	\end{equation}
	These equations can be solved in \texttt{Macaulay2}~\cite{M2} using
	primary decomposition algorithms.  This computation shows that the
	complex algebraic variety defined by \eqref{eq:exampleEqs} consists
	of two irreducible components.  In one of the components,
	$k_{22} = s_{11} = 0$ holds.  So, if this component contains
	covariance matrices at all, then they are of non-regular Gaussians.
	We do not consider this further in this example, although boundary
	components can be important with respect to marginalization; see
	\cref{ex:SmoothNonminor}.  The other component consists of the matrices
    $\Sigma$ of the~form
	\[
	\Sigma =
	\begin{pmatrix}
	\sigma_{11} & \sigma_{12} & 0 & 0 \\
	\sigma_{12} & \sigma_{22} & 0 & 0 \\
	0 & 0 & \sigma_{33} & 0 \\
	0 & 0 & 0 & \sigma_{44}
	\end{pmatrix},
	\quad
	K =
	\begin{pmatrix}
	k_{11} & k_{12} & 0 & 0 \\
	k_{12} & k_{22} & 0 & 0 \\
	0 & 0 & k_{33} & 0 \\
	0 & 0 & 0 & k_{44}
	\end{pmatrix},
	\]
	subject to the constraints $\Sigma K = \mathbbm1_{4}$.
	This component is 5-dimensional and contains positive definite matrices.
	If one additionally normalizes the variances as $\sigma_{ii} = 1$
	for $i=1,\dots, 4$, then $\Sigma$ is positive definite exactly if
	$\sigma_{12} \in (-1,1)$.
	The (complex) singular locus of the interesting component is empty.
	In fact, smoothness is clear from the simple parametrization.
\end{example}

Several features of this example follow from results in this paper.
For example, the vanishing ideal of the model is a monomial ideal
by~\cref{theorem:CI_ideal}.  That $\M_{1}(G,H)$ is a curve segment is
explained by \cref{proposition:single_edge}. The block-diagonal
structure follows from \cref{theorem:Aida}.

\begin{remark}\label{r:KSigma}
  For computation it is sometimes useful to work in $(\Sigma,K)$-space
  as in Example~\ref{example:smooth_non_complete}.  Let
  $\Sigma = (\sigma_{ij})$ and $K = (k_{st})$ be generic symmetric
  matrices (possibly with ones on the diagonal).  To computationally
  answer algebraic questions about constrained covariance matrices, one
  considers the rings $\RR[\sigma_{ij}: i \le j]$ and
  $\RR[\sigma_{ij}, k_{st}: {i \le j}, {s \leq t}]$ of polynomials with
  real coefficients and whose indeterminates stand for entries of the
  symmetric matrices $\Sigma = (\sigma_{ij})$ and $K = (k_{st})$.  To
  impose equational constraints, one forms quotients by ideals
  generated by the equations.  For example, the relation that
  $\Sigma K = \mathbbm{1}_{n}$ is implemented by construction of the
  quotient ring
  $\RR[\sigma_{ij}, k_{st}: i \le j, s \leq t]/(\Sigma K -
  \mathbbm{1}_n)$.  Another useful trick is to impose non-vanishing or
  invertibility of certain polynomials, for example $\det(\Sigma)$.
  This is achieved by localization.  The~ring
  $\RR[\sigma_{ij}: i \le j]_{\det(\Sigma)}$ is an enlarged version of
  $\RR[\sigma_{ij}: i \le j]$ where now $\det(\Sigma)$ is invertible.
  In fact, the natural map
  \begin{equation*}
    \RR[\sigma_{ij}: i \le j]_{\det(\Sigma)} \overset{\cong}{\longrightarrow}
    \RR[\sigma_{ij}, k_{st}: i \le j, s \leq t]/(\Sigma K - \mathbbm{1}_n),
  \end{equation*}
  defined by mapping $\sigma_{ij}$ to $\sigma_{ij}$ is an isomorphism,
  as it should be because the constraint that $K = \Sigma^{-1}$ makes
  all variables $k_{st}$ functions of the $\sigma_{ij}$.  If
  $I \subseteq \RR[\sigma_{ij}: i \le j]$ is an ideal, then the
  restriction of its extension in the localization at $\det(\Sigma)$
  agrees with its saturation at~$\det(\Sigma)$. This provides a way to
  study conditional independence ideals in the ring
  $\RR[\sigma_{ij}, k_{st}: {i \le j}, {s \leq t}]/\ideal{\Sigma K -
    \mathbbm{1}_n}$, where almost-principal minors of high degree are
  directly available as the $k_{st}$ variables.  Saturation at
  $\det(\Sigma)$, however, is of course not equivalent to the
  saturation at all principal minors.  We recommend
  \cite[Chapter~3]{SullivantBook} for a general introduction to
  computational methods of commutative algebra with a view towards
  statistics.
\end{remark}

\begin{remark}\label{rem:Markovity}
  The term \emph{double Markov property} appears in \cite[Lemma
  1]{kaced2013conditional} based on \cite[Exercise 16.25,
  p.~392]{csiszar2011information} where it is called \emph{double
    Markovity}.  It describes constraints on three random variables
  which are in a special \emph{pair of Markov chains}.  This notion is
  unrelated to the Markovness with respect to two undirected graphs
  studied here.  We judge the potential for confusion low enough to
  reuse this term.
\end{remark}

\subsection*{Overview of results}

The core of our work are several geometric and algebraic insights
having implications for statistical procedures dealing with $\M(G,H)$.
In these results it is often useful to restrict dimension and work
with correlation matrices, which are covariance matrices with ones on
the diagonal.  We write $\PD_{n,1}$ for the set of positive definite
matrices with ones on the diagonal.  It is bounded and known as the
\emph{elliptope}.  Then $\M_1(G,H) \coloneqq \M(G,H) \cap \PD_{n,1}$
is a \emph{correlation model}. Let $ E_G $ denote the edge set of
$ G $ and $ G\cap H $ the graph on $ N $ with edge set
$ E_G\cap E_H $, and similarly $G \cup H$ the graph with edge set
$ E_G\cup E_H $.  An important insight is that geometric and algebraic
properties of double Markovian models often depend on features or the
simplicity of $G\cap H$.  The first result is a decomposition theorem
relying on a notion of direct sum defined via block matrices in
\cref{sec:MinorsEtc}.

\begin{theoremAlph}[\Cref{theorem:Aida,c:trivialModel}]
  Let $V_1, \ldots, V_r$ be a partition of $N$ such that each $V_i$ is
  the vertex set of a connected component of $G \cap H$. Then
  \begin{equation*}
    \M(G,H) = \bigoplus_{i=1}^r \M(G|_{V_i}, H|_{V_i}),
  \end{equation*}
  i.e., every $\Sigma \in \M(G,H)$ has a block-diagonal structure with
  $r$ diagonal blocks having rows and columns indexed by the $V_i$. In
  particular, the correlation model satisfies
  $\M_1(G,H) = \set{\mathbbm{1}_n}$ if and only if
  $E_G \cap E_H = \emptyset$.
\end{theoremAlph}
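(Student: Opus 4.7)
The plan is to establish the asserted equality by two inclusions, after observing the block-matrix consequence of the partition. For the easier inclusion $\bigoplus_i \M(G|_{V_i}, H|_{V_i}) \subseteq \M(G,H)$: if $\Sigma$ is block-diagonal with blocks $\Sigma_i \in \M(G|_{V_i}, H|_{V_i})$, then $K = \Sigma^{-1}$ is block-diagonal as well, and any non-edge $ij \notin H$ (or $ij \notin G$) either straddles two blocks, so the corresponding entry of $\Sigma$ (or $K$) is automatically zero, or lies inside a block $V_l$, so it is a non-edge of the restricted graph and the required zero comes from the local model.

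For the opposite inclusion, I would fix $\Sigma \in \M(G,H)$ with $K = \Sigma^{-1}$, set $A = V_1$ and $B = N \setminus A$, and argue $\Sigma_{AB} = 0$; iterating over $i$ then gives the full block-diagonal decomposition. The combinatorial key is that for $i \in A$ and $j \in B$, the pair $ij$ is not an edge of $G \cap H$ because $A$ is a union of components, so either $ij \notin G$ forces $K_{ij} = 0$, or $ij \notin H$ forces $\sigma_{ij} = 0$. In particular the entrywise products $(\Sigma_{AB})_{ij} (K_{AB})_{ij}$ all vanish, and so does the Frobenius inner product $\tr(\Sigma_{AB}^\top K_{AB})$.

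The algebraic ingredient is the off-diagonal block of $\Sigma K = \mathbbm{1}_n$, namely $\Sigma_{AA} K_{AB} + \Sigma_{AB} K_{BB} = 0$. Since $\Sigma_{AA}$ and $K_{BB}$ are principal submatrices of positive-definite matrices, they are invertible, so $\Sigma_{AB} = -\Sigma_{AA} K_{AB} K_{BB}^{-1}$. Plugging this in and using cyclicity of the trace,
\[
0 = \tr\bigl(\Sigma_{AB}^\top K_{AB}\bigr) = -\tr\bigl(K_{BB}^{-1} K_{AB}^\top \Sigma_{AA} K_{AB}\bigr).
\]
Now $K_{AB}^\top \Sigma_{AA} K_{AB}$ is positive semidefinite and $K_{BB}^{-1}$ is positive definite, so the right-hand trace is nonnegative and equals zero only when $K_{AB} = 0$. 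Substituting back yields $\Sigma_{AB} = 0$, and repeating the argument for each $V_i$ produces the block-diagonal decomposition; each $\Sigma_{V_i,V_i}$ then visibly lies in $\M(G|_{V_i}, H|_{V_i})$.

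The main obstacle is the mismatch between the two sparsity patterns: when $i, j$ are in different components of $G \cap H$ and $ij \in H \setminus G$, the model yields $K_{ij} = 0$ but not $\sigma_{ij} = 0$ directly, so the argument must transfer a zero from $K$ to $\Sigma$ against the grain. Positive definiteness is essential here; without it the block identity would only supply a linear relation between $\Sigma_{AB}$ and $K_{AB}$. For the ``in particular'' statement: if $E_G \cap E_H = \emptyset$ every $V_i$ is a singleton, so each diagonal block of an element of $\M_1(G,H)$ equals $1$, forcing $\Sigma = \mathbbm{1}_n$; conversely, an edge $ij \in E_G \cap E_H$ lies in a component of size at least two in which $\sigma_{ij}$ is a free parameter within positive definiteness, producing correlation matrices other than $\mathbbm{1}_n$.
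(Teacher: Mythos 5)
Your proof is correct, and it reaches the key conclusion by a genuinely different argument than the paper's. Both proofs begin with the same combinatorial observation: since no edge of $G \cap H$ leaves $V_1$, every entrywise product $(\Sigma_{AB})_{ij}(K_{AB})_{ij}$ vanishes, hence $\tr(\Sigma_{AB}^\top K_{AB}) = 0$. From there the paper normalizes the diagonal blocks by symmetric square roots $A_V, A_W$ to produce a matrix $\Sigma'$ with identity diagonal blocks, shows $\tr(\Sigma') = \tr(\Sigma'^{-1}) = n$ using the block identity for $\Sigma'\Sigma'^{-1}$, and then applies the equality case of the Cauchy--Schwarz inequality to $\langle T, T^{-1}\rangle$ with $T = \sqrt{\Sigma'}$ to force $\Sigma' = \lambda\mathbbm{1}_n$. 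You instead substitute the off-diagonal block relation $\Sigma_{AB} = -\Sigma_{AA}K_{AB}K_{BB}^{-1}$ into the vanishing trace, obtaining $\tr\bigl(K_{BB}^{-1}K_{AB}^\top\Sigma_{AA}K_{AB}\bigr) = 0$, the trace of a product of a positive definite with a positive semidefinite matrix; this forces $K_{AB}^\top\Sigma_{AA}K_{AB} = 0$, hence $K_{AB} = 0$ and then $\Sigma_{AB} = 0$. Your route is shorter, avoids square roots of the full matrix, and, like the paper's, uses positive definiteness essentially and adapts verbatim to Hermitian positive definite matrices. Two small points you may wish to spell out: that $\tr(PM) = 0$ with $P \succ 0$ and $M \succeq 0$ forces $M = 0$ (write $P = Q^\top Q$ and note $\tr(QMQ^\top) = 0$), and that $K_{AB}^\top \Sigma_{AA} K_{AB} = 0$ with $\Sigma_{AA} \succ 0$ forces $K_{AB} = 0$. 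The easy inclusion and the ``in particular'' clause are handled essentially as in the paper, which exhibits the same one-parameter family of correlation matrices supported on a single common edge to show nontriviality when $E_G \cap E_H \neq \emptyset$.
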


The next result exhibits that also the union of $G$ and $H$
contributes.  If it is complete, then the imposed constraints are
simple enough to show, for example, smoothness.

\begin{theoremAlph}[\Cref{dim_model,theorem:Piotr,theorem:dim_smooth}]
  For all graphs $G$ and $H$ we have
  $\dim(\M(G,H)) \leq |E_G \cap E_H| + n$. If $G \cup H = K_N$, then
  $\M(G,H)$ is smooth of dimension $|E_G \cap E_H| + n$. Conversely,
  if $\M(G,H)$ attains this maximal dimension, its top-dimensional
  connected components are smooth with irreducible Zariski closure.
\end{theoremAlph}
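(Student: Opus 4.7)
I would prove all three assertions by establishing a uniform bound on the Zariski tangent space at every point of $\M(G,H)$. This tangent bound directly gives the upper bound on $\dim \M(G,H)$; when $G \cup H = K_N$ the bound is saturated at every point, which upgrades (via the Jacobian criterion) to smoothness of constant dimension; and when the maximum dimension is attained, the pointwise bound forces every point of a top-dimensional connected component $C$ to be smooth, from which irreducibility of $\overline{C}$ follows by a standard connectedness argument.

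At a point $\Sigma \in \M(G,H)$ with $K = \Sigma^{-1}$, the Zariski tangent space is
\[
  T_\Sigma \M(G,H) = \bigl\{M \in L_H : (KMK)_{ij} = 0 \text{ for all } ij \notin G\bigr\},
\]
where $L_H \defas \set{M \in \Sym_n : M_{ij} = 0 \text{ for all } ij \notin H}$ has dimension $n + |E_H|$. The main step is to show that at every such $\Sigma$ the $|E_H \setminus E_G|$ linear functionals $M \mapsto (KMK)_{ij}$ on $L_H$, indexed by $ij \in E_H \setminus E_G$, are linearly independent. This forces $\dim T_\Sigma \M(G,H) \le n + |E_H| - |E_H \setminus E_G| = n + |E_G \cap E_H|$ and hence $\dim \M(G,H) \le n + |E_G \cap E_H|$, proving the first assertion.

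To establish the independence, suppose $\sum_{ij \in E_H \setminus E_G} c_{ij}(KMK)_{ij} = 0$ for all $M \in L_H$ and let $C \in \Sym_n$ be the symmetric matrix with entries $c_{ij}$ on $E_H \setminus E_G$ and zero elsewhere. The functional equals $M \mapsto \tr(KCK \cdot M)$, so its vanishing on $L_H$ forces $(KCK)_{ij} = 0$ on all diagonal entries and on $E_H$, and in particular on $E_H \setminus E_G$. Since $C$ is itself supported in $E_H \setminus E_G$, we get
\[
  0 = \tr(C \cdot KCK) = \tr\bigl((K^{1/2} C K^{1/2})^2\bigr) = \norm{K^{1/2} C K^{1/2}}_F^2,
\]
and positive definiteness of $K$ forces $C = 0$.

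For the second assertion, when $G \cup H = K_N$ the non-edges of $G$ are precisely the edges in $E_H \setminus E_G$, so the functionals above exhaust the tangent-space constraints; consequently $\dim T_\Sigma \M(G,H) = n + |E_G \cap E_H|$ at every $\Sigma \in \M(G,H)$. Constancy of tangent dimension together with the Jacobian criterion makes $\M(G,H)$ a smooth submanifold of $\PD_n$ of this dimension, and $\mathbbm{1}_n \in \M(G,H)$ witnesses nonemptiness. For the third assertion, if $\dim \M(G,H) = n + |E_G \cap E_H|$ and $C$ is a top-dimensional connected component, then any $\Sigma \in C$ satisfies $\dim C \le \dim T_\Sigma C \le \dim T_\Sigma \M(G,H) \le n + |E_G \cap E_H| = \dim C$, so $\Sigma$ is a smooth point of $C$. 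Smoothness at $\Sigma$ transfers to the complex Zariski closure $\overline{C}$, whose complex dimension equals the real dimension of $C$, so $\Sigma$ lies on a unique complex irreducible component of $\overline{C}$. The resulting decomposition of $C$ by the components of $\overline{C}$ meeting it is a partition into clopen pieces, which must collapse to one since $C$ is connected; hence $\overline{C}$ is irreducible.

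The main obstacle I anticipate is the pointwise linear-independence argument. It depends sharply on $C$ being supported inside $E_H \setminus E_G$ so that in the pairing $\tr(C \cdot KCK)$ only entries of $KCK$ that are known to vanish are paired with nonzero entries of $C$; if $C$ were allowed to meet the diagonal or $E_G \cap E_H$, the trace identity would not collapse to zero and the argument breaks. Once that identity is in hand, the remaining deductions are standard applications of the Jacobian criterion and of the principle that a smooth connected real variety has irreducible Zariski closure.
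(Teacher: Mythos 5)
Your proposal is correct and follows essentially the same route as the paper: a pointwise bound on the Zariski tangent space via the Jacobian of the defining equations (\cref{l:pseudo_jacobian} and \cref{dim_model}), transversality of $\M(G)$ and $\M(H)^{-1}$ when $G \cup H = K_N$ (\cref{theorem:Piotr}), and the tangent-space sandwich combined with a clopen-decomposition/connectedness argument for the converse (\cref{theorem:dim_smooth}). Your linear-independence computation via $\tr(CKCK) = \norm{K^{1/2}CK^{1/2}}_F^2$ is the conormal (dual) formulation of the paper's observation that the principal submatrix of the positive definite inverse information matrix indexed by $E_G \setminus E_H$ is invertible; the one point to tighten is that the inequality $\dim C \leq \dim T_\Sigma \overline{C}$ is a priori valid only at points lying on a top-dimensional irreducible component of $\overline{C}$, so, as in the paper, one should first establish that such points lie on a unique component, deduce $\overline{C} = Z$ by connectedness, and only then conclude smoothness at every point of $C$.
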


In \cref{sec:connectedness} we initiate the study of connectedness
of~$\M(G,H)$ in the Euclidean topology.  We conjecture that all double
Markovian correlations models are connected
(\Cref{conj:M1GHconnected}).  This contrasts with the fact that, allowing semi-definite matrices,
similarly defined variants of $\M_1(G,H)$ can consist of isolated
points as in \cref{ex:ZeroDimlModel}.  We have the following results.

\begin{theoremAlph}[\Cref{corollary:coordinate_planes,thm:connected,proposition:single_edge,proposition:classify,proposition:edgeIntersection3}]
  The double Markovian model $\M(G,H)$ is connected in the following
  cases:
  \begin{enumerate}
  \item For every non-edge $kl$ of $G$ there is at most one path $p$
    in $H$ connecting $k$ and~$l$ (or if this holds for $G$ and $H$
    exchanged).
  \item There is a vertex $i \in N$ such that for all non-edges $kl$
    of $G$, every path in $H$ connecting $k$ and $l$ contains~$i$ (or
    if this holds for $G$ and $H$ exchanged).
  \item $|E_G \cap E_H| \leq 3$.
  \end{enumerate}
\end{theoremAlph}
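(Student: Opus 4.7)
The strategy in each case is to show $\M(G,H)$ is path-connected to the identity matrix $\mathbbm{1}_n \in \M(G,H)$. By \cref{theorem:Aida} we may assume $G \cap H$ is connected, since in general $\M(G,H)$ decomposes as a direct sum across connected components of $G \cap H$ and connectedness is preserved under such products.

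For (1), fix a non-edge $kl$ of $G$ and rewrite the concentration constraint as $K_{kl} = \pm \det(\Sigma^{kl})/\det(\Sigma) = 0$, where $\Sigma^{kl}$ denotes the minor obtained by deleting row $k$ and column $l$. Expand $\det(\Sigma^{kl})$ via the Leibniz formula: the non-vanishing monomials are indexed by bijections $\pi\colon N\setminus\{k\} \to N\setminus\{l\}$ whose support lies on $E_H \cup \{\text{diagonal}\}$, and a standard combinatorial argument decomposes each such $\pi$ uniquely into a simple $l$-to-$k$ path $P$ in $H$ together with disjoint cycles on the remaining vertices. The associated monomial factors as $\prod_{e \in P}\sigma_e \cdot \det(\Sigma_{N \setminus V(P), N \setminus V(P)})$, and the second factor, being a principal minor of a PD matrix, is strictly positive on $\PD_n$. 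Under the unique-path hypothesis the equation $K_{kl}=0$ therefore collapses on the model to $\prod_{e \in P}\sigma_e = 0$, forcing some $\sigma_{ij}$ along $P$ to vanish. Iterating across non-edges of $G$ exhibits $\M(G,H)$ as a finite union of submodels $\M(G, H')$ with $H' \subsetneq H$, each still satisfying the unique-path hypothesis; induction on $|E_H|$ then gives connectedness, with base case a configuration in which all concentration constraints become vacuous and $\M(G, H')$ reduces to a covariance graph model on a forest, which admits a convex parametrization. All pieces contain $\mathbbm{1}_n$, so the union is connected.

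For (2), pivot on the common vertex $i$ via the Schur complement, writing $\Sigma$ in block form relative to $\{i\}$ and $N \setminus \{i\}$. The cut-vertex hypothesis forces the concentration constraints from non-edges of $G$ not incident to $i$ to translate into constraints of the same form on the Schur complement $S := \Sigma_{N \setminus i, N \setminus i} - \Sigma_{N \setminus i, i}\Sigma_{i, N \setminus i}/\sigma_{ii} \in \PD_{n-1}$, so $S$ varies over $\M(G|_{N \setminus i}, H|_{N \setminus i})$; the constraints involving $i$ are linear in the column $\Sigma_{i, N \setminus i}$. Induction on $n$, combined with the reduced model inheriting the hypotheses of this theorem, yields a connected base, and the fibers are affine slices of $\PD_n$, hence convex. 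For (3), enumerate the few isomorphism types of $G \cap H$ with at most three edges and verify that each either satisfies (1), satisfies (2), or reduces directly to \cref{proposition:single_edge} after applying \cref{theorem:Aida}. The main obstacle lies in (1): controlling the Leibniz collapse uniformly through the inductive edge removals, and verifying that the remaining model at each stage still satisfies the unique-path hypothesis for every surviving non-edge of $G$, constitute the technical heart of the argument.
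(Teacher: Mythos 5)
Your case (1) is essentially the paper's own argument: the path expansion of the almost-principal minor (\cref{proposition:almostPrincipalMinorsViaPath}) plus positivity of principal minors collapses each constraint to a monomial, so by \cref{theorem:CI_ideal} the model is a finite union of coordinate subspaces intersected with $\PD_n$, each convex and containing $\mathbbm{1}_n$; your edge-removal induction is a more laborious packaging of the same fact and does go through (removing edges from $H$ only removes paths, so the unique-path hypothesis persists).

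The genuine gap is in case (2). You assert that after pivoting on $i$ the Schur complement $S = \Sigma_{N\setminus i,N\setminus i} - \sigma_{ii}^{-1}\Sigma_{N\setminus i,i}\Sigma_{i,N\setminus i}$ ``varies over $\M(G|_{N\setminus i},H|_{N\setminus i})$''. The concentration half is fine, since $(S^{-1})_{kl} = (\Sigma^{-1})_{kl}$ for $k,l\neq i$, but the covariance constraints do \emph{not} descend: for $st\in E_H^\comp$ with $s,t\neq i$ the model imposes $\Sigma_{st}=0$, which reads $S_{st} = -\sigma_{si}\sigma_{ti}/\sigma_{ii}$, not $S_{st}=0$. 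This is exactly the marginal-versus-conditional asymmetry of \cref{l:DM-minorclosed}: conditioning on $i$ sends $\CIS{G,H}$ to $\CIS{\Marg{G,i},\Cond{H,i}}$, and $\Cond{H,i}$ completes the neighborhood of $i$ to a clique, destroying precisely the covariance constraints your induction needs. Concretely, take $n=4$, $i=1$, $E_H=\set{12,13,14,24}$, $E_G=K_4\setminus\set{23}$: the hypothesis of (2) holds (both $H$-paths from $2$ to $3$ pass through $1$), yet the matrix with $\sigma_{13}=\sigma_{14}=0.1$ and all other off-diagonal entries zero lies in $\M(G,H)$ while its Schur complement has $S_{34}=-0.01\neq 0$, so $S$ leaves your claimed target model. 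The induction therefore does not set up (and even with a correct base you would still owe an argument that a map with convex fibres over a connected base has connected total space). The paper's proof of \cref{thm:connected} avoids conditioning entirely: it scales row and column $i$ by $\varepsilon\in[0,1]$ via a Hadamard product, which preserves positive definiteness, keeps every almost-principal minor $\det(\Sigma^\varepsilon_{N\setminus k,N\setminus l}) = \varepsilon^2\det(\Sigma_{N\setminus k,N\setminus l})=0$ because every contributing path passes through $i$, and at $\varepsilon=0$ lands in the convex set $\M(H\setminus i)^{-1}\subseteq\M(G,H)$. Finally, your case (3) is only the assertion that every reduced pair on at most four vertices satisfies (1) or (2); this appears to be true after the reduction via \cref{theorem:Aida}, but you do not check it, and it leans on your broken case (2). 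The paper instead classifies these models explicitly in \crefrange{proposition:single_edge}{proposition:edgeIntersection3} and reads off connectedness from the parametrizations.
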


The next theorem is our main algebraic result, see
\cref{theorem:CI_ideal} for a more general version.  Here a forest is
a (not necessarily connected) graph with no cycles.

\begin{theoremAlph}[\Cref{theorem:CI_ideal}]
  Let $G$ be any graph and $H$ a forest.  Then the vanishing ideal of
  $\M_1(G,H)$ is the square-free monomial ideal
  \[
    \mathcal{I}(\M_1(G,H))
    =
    \ideal{\sigma_{ij}, \sigma_p: ij \not\in H, \ p \text{ path in } H \text{ with } e(p) \not\in G}.
  \]
  Here, $\sigma_p$ is the product of variables corresponding to edges
  in $p$, and $e(p)$ denotes its endpoints.
\end{theoremAlph}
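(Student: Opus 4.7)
The plan is to establish both inclusions of the asserted ideal equality. After invoking \cref{theorem:Aida} to reduce to the case that $H$ is a single tree, the easy generators of $J$, namely $\sigma_{ij}$ for $ij \notin H$, vanish on $\M_1(G,H)$ by the definition of the model. For the monomial generator $\sigma_p$ associated to a path $p = (i_0,\ldots,i_k)$ in $H$ with $e(p) = i_0 i_k \notin G$, I would prove the cofactor identity
\[
  C_{i_0 i_k}(\Sigma) \;=\; \pm\, \sigma_p \cdot \det\!\bigl(\Sigma|_{N \setminus V(p)}\bigr)
\]
for any symmetric $\Sigma$ with $\sigma_{ii}=1$ and $\sigma_{ij}=0$ whenever $ij \notin H$. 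Expand $\det \Sigma^{(i_0,i_k)}$ by Leibniz and identify each bijection with a permutation $\tilde\phi$ of $N$ satisfying $\tilde\phi(i_0)=i_k$; nonvanishing of a term forces every non-fixed arc $i \mapsto \tilde\phi(i)$ with $i \neq i_0$ to be an edge of $H$. Because $H$ has no cycles, the cycle of $\tilde\phi$ through $i_0$ is forced to trace $p$ backwards from $i_k$ to $i_0$, contributing $\sigma_p$, while the remaining cycles of $\tilde\phi$ can only be fixed points or transpositions along edges of $H$ inside $N \setminus V(p)$, and together they sum to $\det(\Sigma|_{N \setminus V(p)})$. This principal minor is strictly positive on positive definite $\Sigma$, so $(\Sigma^{-1})_{i_0 i_k} = 0$ on $\M_1(G,H)$ forces $\sigma_p$ to vanish, establishing $J \subseteq \mathcal{I}(\M_1(G,H))$.

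For the reverse inclusion $\mathcal{I}(\M_1(G,H)) \subseteq J$, I would use that every squarefree monomial ideal is real radical, which reduces the claim to showing that the real variety $\mathcal{V}_\RR(J)$ lies in the Zariski closure of $\M_1(G,H)$. That variety decomposes as the union of coordinate subspaces $L_F$ ranging over the facets of the Stanley--Reisner complex; unwinding the generators of $J$, these facets are exactly the maximal subsets $F \subseteq E_H$ such that every connected component of the subforest $(N,F)$ induces a clique in $G$. For each facet $F$, I would exhibit correlation matrices $\Sigma \in L_F$ close to $\mathbbm{1}_n$ lying in $\M_1(G,H)$: such $\Sigma$ are positive definite and block-diagonal with respect to the partition of $N$ into components of $(N,F)$, so $\Sigma^{-1}$ shares this block structure; the marginal constraints $\sigma_{uv}=0$ for $uv \notin H$ hold since $F \subseteq E_H$, and the conditional constraints $(\Sigma^{-1})_{uv}=0$ for $uv \notin G$ hold because any $u,v$ with $(\Sigma^{-1})_{uv} \neq 0$ must lie in the same block and are hence joined by an edge of $G$ by the facet condition. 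Since this yields a Zariski-dense subset of $L_F$ inside $\M_1(G,H)$, summing over facets gives $\mathcal{V}_\RR(J) \subseteq \overline{\M_1(G,H)}$, as required.

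The main obstacle is the cofactor identity in the first paragraph. Both the sign bookkeeping and the combinatorial claim that no cycle of length at least three appears in $\tilde\phi$ off the distinguished cycle through $i_0$ rely crucially on $H$ being a forest, and I expect these steps to require the most care. Everything else -- the reduction to a single tree, the Stanley--Reisner unpacking of the facets, and the perturbation argument near the identity -- should follow routinely from the setup already developed in the paper.
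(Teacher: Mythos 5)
Your proposal is correct and takes essentially the same route as the paper: your cofactor identity is exactly \cref{proposition:almostPrincipalMinorsViaPath} specialized to the case of a unique connecting path (which is how the paper's \cref{theorem:CI_ideal} obtains the generators $\sigma_p$), and your reverse inclusion --- square-free monomial ideals are real radical, and each coordinate-subspace component $L_F$ meets $\M_1(G,H)$ in a Zariski-dense set of block-diagonal correlation matrices near $\mathbbm{1}_n$ --- is a carefully fleshed-out version of the paper's terser observation that every irreducible component of $\mathcal{V}_{\CC}(\SCI_{G,H})$ is a coordinate subspace intersecting $\PD_{n,1}$ in a smooth real point. The only quibble is your opening reduction via \cref{theorem:Aida}, which decomposes by connected components of $G \cap H$ rather than of $H$ and is in any case unnecessary, since the rest of your argument applies verbatim to an arbitrary forest $H$.
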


Finally, in
\crefrange{proposition:single_edge}{proposition:edgeIntersection3} we
give a classification up to symmetry and matrix inversion of all
double Markovian models with $|E_G \cap E_H| \leq 3$.

\section{Preliminaries on Conditional Independence Structures}
\label{CIIntro}

\subsection{Gaussian conditional independence}

Gaussian graphical models as well as the double Markovian models are
conditional independence models: they are sets of Gaussian distributions
specified by conditional independence assumptions derived from a graph
or pair of graphs. The conditional independence relations of random
variables can be studied combinatorially, using abstract properties
of conditional independence instead of concrete numerical data like a
density function or covariance matrix. To this end, we introduce formal
symbols $(ij|K)$ where $i\neq j \in N$ and $K\subseteq N \setminus ij$.
These formal symbols are subject to the efficient \emph{\Matus set
notation} where union is written as concatenation and singletons are
written without curly braces.  For example, $ijK$ is shorthand for~$\{i\}\cup\{j\}\cup K$.

The symbol $(ij|K)$ shall represent the conditional independence
$\ind{X_{i}}{X_{j}}[X_{K}]$ where $X_{K} = (X_{k})_{k\in K}$.  For
Gaussian random variables $X_{1},\dotsc, X_{n}$, the CI statement
$\ind{X_I}{X_J}[X_K]$ is equivalent to $\rk \Sigma_{IK,JK} = |K|$
by~\cite[Proposition~4.1.9]{SullivantBook}.
Using the adjoint formula for the inverse of a matrix, it can be seen
that a statement $(ij|N\setminus ij)$ (as it appears in the definition
of a graphical model) is equivalent to $(\Sigma^{-1})_{ij} = 0$.

If $I = i$ and $J = j$ are singletons, the rank condition is equivalent
to the vanishing of the determinant of the square submatrix $\Sigma_{iK,jK}$.
These determinants are \emph{almost-principal minors}. It is well-known
that the statements $(ij|K)$ completely describe the entire CI~relation
of a random vector~\cite[Section~2]{MatusMinors}.
The set of all conditional independence statements among $n$ random
variables is
$\A_{n} = \set{ (ij | K) : i\neq j \in N, K\subseteq N\setminus ij}$.
An abstract conditional independence relation is a subset of~$\A_{n}$.
Fundamental problems in the intersection of probability, computer
science and information theory concern the set of \emph{realizable} subsets
$\mathfrak{R} \subseteq 2^{\A_{n}}$, meaning that for
$\C R\in\mathfrak{R}$ there is a random vector $X$ satisfying all
statements in $\C R$ and none of those in~$\A_{n}\setminus \C R$.

To each positive definite $n\times n$-matrix $\Sigma$ we associate a
corresponding CI relation
\[
  \CIS{\Sigma} \defas \set{(ij|K) : \rk \Sigma_{iK,jK} = |K|} \subseteq \A_{n},
\]
consisting exactly of the CI statements satisfied by~$\Sigma$.
Conversely, the covariance matrices satisfying all statements of a
CI~relation $\C R$ form its \emph{Gaussian conditional independence model}:
\begin{equation*}
  \M(\C R) \coloneqq \set{ \Sigma \in \PD_n: \det(\Sigma_{iK,jK}) = 0
    \text{ for all } (ij|K) \in \C R } \subseteq \PD_n
  \subseteq \Sym^2(\RR^n) \cong \RR^{\binom{n+1}{2}}.
\end{equation*}
We consider this set together with the subset topology with respect to
the Euclidean topology on the set of symmetric matrices~$\Sym^2(\RR^n)$.
The cone $\PD_n$ is open in $\Sym^2(\RR^n)$ as it is the preimage of
$\RR_{>0}^n \subseteq \RR^n$ under the continuous map which sends
$\Sigma \in \Sym^2(\RR^n)$ to the vector in $\RR^n$ consisting of the
determinants of all $n$ leading principal minors, using Sylvester's
criterion.  Writing $\Sigma = (\sigma_{ij})$, the associated
\emph{correlation matrix} of $\Sigma$ has
$\frac{\sigma_{ij}}{\sqrt{\sigma_{ii}} \sqrt{\sigma_{jj}}}$ as its
$ij$-entry.  Its~diagonal consists of ones and all non-diagonal
entries lie in $(-1,1)$ as $\det(\Sigma_{ij,ij}) > 0$. Therefore,
the set of correlation matrices $\PD_{n,1}$ is an intersection of
$\PD_{n}$ with an affine linear subspace of $\Sym^2(\RR^n)$.  This
yields a subspace topology and also a canonical smooth structure
on~$\PD_{n,1}$, making it into a smooth submanifold of $\PD_n$ of
codimension~$n$.  It~is often convenient to work with the bounded set
of correlation matrices in the model:
\begin{equation*}
  \M_1(\C R) \coloneqq \set{ \Sigma \in \PD_{n,1}: \det(\Sigma_{iK,jK}) = 0 \text{ for all } (ij|K) \in \C R } = \PD_{n,1} \cap \M(\C R).
\end{equation*}
Many favorable properties transfer between $\M_1(\C R)$
and~$\M (\C R)$, especially if they are of differentiable nature, see
\Cref{l:SmoothnessCorrelation}.  Some care is necessary when
considering algebraic properties such as the number of Zariski
irreducible components.  $\M_{1}(\C R)$ is a linear section of
$\M (\C R)$, so its algebraic properties may differ; see
e.g.~\cref{ex:CI_vs_corr_model}.

There is no finite axiomatic characterization of the set
$\mathfrak{R}$ of realizable CI relations that is valid for all~$n$.
Neither in general~\cite{studeny92:_condit} nor for Gaussians
specifically~\cite{sullivant2009gaussian}.
Closure properties of CI relations often have cryptic names going back
to the search for a finite axiomatization.  In this terminology, for
$\Sigma\in\PD_{n}$, the relation $\CIS{\Sigma}$ is a weakly transitive,
compositional graphoid.  The compound of these properties is also the
definition of \emph{gaussoid} \cite{LM07}. Gaussoids approximate
Gaussian conditional independence in a similar way to matroids
approximating linear independence \cite{Geometry}.

\subsection{Undirected graphical models}

If $G$ is a graph on $N$, then
\[
  \CIS{G} \defas \set{(ij|K) : \text{$K$ separates $i$ and $j$ in $G$}}
\]
denotes the CI \emph{separation statements}, those that follow from
separation in the graph.  We refer to CI~relations of the form $\CIS{G}$
as \emph{Markov relations}. See \cite{lauritzen1996graphical} for all
details on modeling CI by graphs.
Any relation $\CIS{G}$ is realizable, meaning that it equals
$\CIS{\Sigma}$ for some~$\Sigma\in\PD_{n}$ and one can even pick
$\Sigma$ with all positive correlations~\cite[Theorem~4]{Geometry}.
The models realizing $\CIS{G}$ are smooth as they are inverse linear
spaces and thus parametrized by a diffeomorphism (see \cref{p:MarkovianCI}).
Since the CI~relation $\CIS{G}$ of any graph~$G$ is realizable,
it follows that $\CIS{G}$ is a gaussoid. In~addition, graph separation
is upward-stable, meaning that $(ij|L)$ implies $(ij|kL)$ for all
$k \in N \setminus ijL$, and being an upward-stable gaussoid is even a
characterization of being of the form $\CIS{G}$ for some undirected
graph~$G$ by \cite[Proposition~2]{MatusMinors}.

A \emph{pseudographoid} is an abstract CI structure which satisfies
the intersection property, which together with the semigraphoid axiom
forms the definition of \emph{graphoid}; see \cite[Remark~1]{LM07} for
the terminology.
The following lemma states that it is sufficient to verify that
$\Sigma$ satisfies the maximal separation statements (which correspond
to the non-edges in~$G$), for it to satisfy all separation statements
for~$G$.  In this case $\Sigma$ is \emph{Markovian for~$G$}.
\begin{lemma} \label{lemma:Markovian} Let $G$ be an undirected graph
  and $\Sigma$ a complex symmetric matrix with non-vanishing principal
  minors and let $m = \set{(ij|N\setminus ij) : i\neq j}$ denote the
  set of maximal CI statements.  Then
  $\CIS{G} \cap m \subseteq \CIS{\Sigma}$ implies
  $\CIS{G} \subseteq \CIS{\Sigma}$.
\end{lemma}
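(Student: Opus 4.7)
The plan is to translate the conclusion into the vanishing of an entry in the inverse of a principal submatrix of $\Sigma$, and then use the graph-theoretic separation structure together with a Schur complement formula to establish it.

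First I would rephrase both hypothesis and conclusion via the adjoint formula. For any statement $(ij|K)$ with nonvanishing principal minor $\det(\Sigma_{iKj,iKj})$, the almost-principal minor $\det(\Sigma_{iK,jK})$ equals that principal minor times the $(i,j)$-entry of $(\Sigma_{iKj,iKj})^{-1}$, so $(ij|K) \in \CIS{\Sigma}$ is equivalent to $\bigl(\Sigma_{iKj,iKj}^{-1}\bigr)_{ij} = 0$. Applied to $K = N\setminus ij$, the hypothesis $\CIS{G} \cap m \subseteq \CIS{\Sigma}$ reads: the concentration matrix $M \defas \Sigma^{-1}$ satisfies $M_{ij} = 0$ for every non-edge $ij$ of $G$. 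Given an arbitrary (not necessarily maximal) $(ij|K) \in \CIS{G}$, it remains to show $\bigl(\Sigma_{iKj,iKj}^{-1}\bigr)_{ij} = 0$.

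For this, I would exploit the separation. Since $K$ separates $i$ from $j$ in $G$, let $A$ be the vertex set of the connected component of $G\setminus K$ containing $i$ and $B \defas N\setminus(A\cup K)$; then $j \in B$ and no edge of $G$ joins $A$ to $B$, so $M$ vanishes on the $A\times B$ block. Writing $S \defas iKj$ and $T \defas N\setminus S = A'\sqcup B'$ with $A'=A\setminus i$, $B' = B\setminus j$, the block-inversion identity gives
\[
  \Sigma_{S,S}^{-1} = M_{S,S} - M_{S,T}\,M_{T,T}^{-1}\,M_{T,S}.
\]
The $(i,j)$-entry of the first summand is $M_{ij}=0$ since $ij$ is a non-edge. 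The matrix $M_{T,T}$ is block-diagonal in the partition $T = A'\sqcup B'$ (because $M_{A',B'}=0$), and so is $M_{T,T}^{-1}$. Expanding $\bigl(M_{S,T}M_{T,T}^{-1}M_{T,S}\bigr)_{ij} = \sum_{t,t'\in T} M_{it}(M_{T,T}^{-1})_{tt'}M_{t'j}$ and splitting according to which of $A'$ or $B'$ contains $t$ and $t'$, the cross-block terms vanish by block-diagonality of $M_{T,T}^{-1}$, the $A'\times A'$ terms by $M_{t'j}=0$ for $t'\in A'$, and the $B'\times B'$ terms by $M_{it}=0$ for $t\in B'$. Hence $(\Sigma_{S,S}^{-1})_{ij}=0$, as required.

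The main subtlety—rather than a substantial obstacle—is invertibility: in the hypothesis we only have nonvanishing principal minors of $\Sigma$, not positive definiteness. This is handled uniformly by Jacobi's identity $\det(M_{T,T}) = \det(\Sigma_{S,S})/\det(\Sigma)$, together with its analogues for $M_{A'A'}$ and $M_{B'B'}$, which certify that every matrix one needs to invert along the way has nonzero determinant.
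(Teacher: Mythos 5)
Your proof is correct, but it takes a genuinely different route from the paper. The paper's proof is axiomatic: it cites Mat\'u\v{s}'s result that $\CIS{\Sigma}$ is a weakly transitive compositional graphoid whenever $\Sigma$ is a complex symmetric matrix with non-vanishing principal minors, and then invokes \cite[Lemma~3]{LM07}, which says precisely that for a pseudographoid containing the maximal statements of a graph, all separation statements follow. You instead give a self-contained linear-algebra argument: translate each $(ij|K)$ into the vanishing of $\bigl(\Sigma_{iKj,iKj}^{-1}\bigr)_{ij}$ via the adjoint formula (valid here because the relevant principal minors are nonzero, so $\rk \Sigma_{iK,jK} \ge |K|$ automatically), read the hypothesis as a zero pattern on $M = \Sigma^{-1}$ over the non-edges of $G$, and propagate it through the block-inversion identity $\Sigma_{S,S}^{-1} = M_{S,S} - M_{S,T}M_{T,T}^{-1}M_{T,S}$ using the $A$--$B$ separation structure. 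All the steps check out, including the invertibility bookkeeping via Jacobi's identity (in fact, once $M_{T,T}$ is known to be invertible and block-diagonal, its diagonal blocks are automatically invertible, so the separate identities for $M_{A'A'}$ and $M_{B'B'}$ are not strictly needed). What the paper's approach buys is brevity and reuse of the graphoid machinery that recurs throughout the article; what yours buys is transparency and independence from the cited closure properties --- it is essentially a direct proof that the pairwise Markov property of the concentration matrix implies the global one, which is the content of the special case of \cite[Lemma~3]{LM07} being used.
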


\begin{proof}
  By~\cite[Corollary~1]{MatusGaussian} the CI~structure $\CIS{\Sigma}$
  is a weakly transitive, compositional graphoid already when $\Sigma$
  is a complex symmetric matrix with non-vanishing principal minors
  (which includes the real positive definite case).
  The lemma follows from \cite[Lemma~3]{LM07}, by choosing
  $\C M = \CIS{G} \cap m$.  Then $G$ is a graph with $i$ and $j$
  adjacent if and only if $(ij|N\setminus ij) \not\in \C M$. Since
  $\C M \subseteq \CIS{\Sigma}$ by assumption, it~follows that
  $\CIS{G} \subseteq \CIS{\Sigma}$.
\end{proof}

By \cref{lemma:Markovian}, $\Sigma$ is Markovian for $G$ if and only
if $\CIS{G} \cap m \subseteq \CIS{\Sigma}$. The maximal CI~statements
$\CIS{G} \cap m$ point out precisely the non-edges in $G$ and therefore
$\Sigma$ being Markovian for $G$ is equivalent to $(\Sigma^{-1})_{ij} = 0$
for all $ij \notin G$. This shows that $\M(G) = \M(\CIS{G})$.

\begin{proposition} \label{p:MarkovianCI}
  Every Markov relation $\CIS{G}$ is realizable by a regular Gaussian
  distribution.  For each graph~$G$, the model $\M(G)$ is irreducible
  and smooth.
\end{proposition}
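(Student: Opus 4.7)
The plan is to derive the geometric part of this proposition from the standard parametrization of $\M(G)$ by concentration matrices. The realizability claim is not really new: the paragraph immediately preceding the statement already cites \cite[Theorem~4]{Geometry} for the existence of a positive definite $\Sigma$ realizing $\CIS{G}$ (indeed with all positive correlations), so I would simply invoke that fact. What remains is the assertion about $\M(G)$.

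To handle it, let $\mathcal{L}_G \subseteq \Sym^2(\RR^n)$ denote the linear subspace of symmetric matrices whose $(i,j)$-entry vanishes for every non-edge $ij$ of $G$; note $\dim \mathcal{L}_G = |E_G| + n$. By \cref{lemma:Markovian}, a matrix $\Sigma \in \PD_n$ lies in $\M(G)$ exactly when $K \defas \Sigma^{-1}$ belongs to $\mathcal{L}_G$, so
\[
  \M(G) = \set{K^{-1} : K \in \mathcal{L}_G \cap \PD_n}.
\]
Since matrix inversion $\iota \colon \PD_n \to \PD_n$ is a real-analytic self-diffeomorphism (an involution), $\M(G)$ is the diffeomorphic image under $\iota$ of the set $\mathcal{L}_G \cap \PD_n$. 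This latter set is non-empty (it contains $\mathbbm{1}_n$), open, and convex in the affine space $\mathcal{L}_G$, hence a connected smooth manifold of dimension $|E_G| + n$. Its diffeomorphic image $\M(G)$ therefore is a smooth submanifold of $\PD_n$ of the same dimension; this gives the smoothness part.

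For irreducibility I would use the same parametrization algebraically. The inversion map is rational, regular on the complement of $\set{\det = 0}$, and when restricted to $\mathcal{L}_G$ it dominates the Zariski closure of $\M(G)$ via a rational map from the irreducible affine variety $\mathcal{L}_G$. Since the image of an irreducible variety under a rational map is irreducible, the Zariski closure of $\M(G)$ is irreducible. The only subtle point worth flagging is that smoothness is meant in the real-analytic/manifold sense while irreducibility is algebraic; both, however, drop out of the single parametrization by $\mathcal{L}_G \cap \PD_n$, so I do not anticipate any genuine obstacle.
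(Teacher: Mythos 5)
Your proposal is correct and follows essentially the same route as the paper: both identify $\M(G)^{-1}$ as the intersection of a linear subspace with $\PD_n$ (via \cref{lemma:Markovian}), observe that this is an open convex, hence smooth and irreducible, set, and transfer these properties to $\M(G)$ through matrix inversion. The only cosmetic differences are that you cite \cite[Theorem~4]{Geometry} for realizability where the paper cites \cite[Theorem~1]{LM07}, and you carry out the transfer directly via the inversion diffeomorphism of $\PD_n$ rather than invoking \cref{l:SmoothnessCorrelation,l:SmoothnessInverse}.
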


\begin{proof}
  Realizations for $\CIS{G}$ were constructed from (inverses of)
  generalized adjacency matrices in~\cite[Theorem~1]{LM07}.
  By \cref{lemma:Markovian} the set $\M(G)^{-1} = \set{ \Sigma^{-1} :
  \Sigma \in \M(G) }$ is a linear subspace intersected with the
  cone~$\PD_n$. It is the interior of a \emph{spectrahedron}.
  As such it is an irreducible semi-algebraic set and smooth.
  These properties are transferred to the inverse~$\M(G)$ by
  \cref{l:SmoothnessCorrelation,l:SmoothnessInverse} below.
\end{proof}

Matúš~\cite[Theorem~2]{MatusLogConvex} proved a geometric
characterization of sets $\M(G)^{-1}$: among all Gaussian conditional
independence models, they are precisely those which are convex subsets
of $\PD_n$.

\subsection{Minors, duality and direct sums}
\label{sec:MinorsEtc}

Marginalization and conditioning are natural operations on random
vectors and can also be carried out on conditional independence
structures.  These abstract operations mimic the effect of statistical
operations on a purely formal level.

\begin{definition}\label{def:CIminors}
  Let $\C R \subseteq \A_N$ and $k \in N$. The \emph{marginal}
  and the \emph{conditional} of $\C R$ on $N \setminus k$ are,
  respectively,
\begin{align*}
  \Marg{\C R,k} &\defas \set{ (ij|K) \in \A_{N \setminus k} : (ij|K)  \in \C R }, \\
  \Cond{\C R,k} &\defas \set{ (ij|K) \in \A_{N \setminus k} : (ij|k\cup K) \in \C R }.
\end{align*}
Any set $\C R' \subseteq \A_{N'}$, $N' \subseteq N$, obtained from
$\C R$ by a sequence of marginalization and conditioning operations is
a \emph{minor} of~$\C R$.
\end{definition}
On covariance matrices, marginalizing away a variable $k \in N$ is
achieved by taking the principal submatrix $\Marg{\Sigma,k} \defas
\Sigma_{N \setminus k}$. The conditional distribution on~$k$ is the
Schur complement of the $k \times k$ entry $\Cond{\Sigma,k} \defas
\Sigma_{N \setminus k} - \sigma_{kk}^{-1} \Sigma_{N\setminus k,k} \cdot
\Sigma_{k,N\setminus k}$. This is proven in \cite[Theorem~2.4.2]{SullivantBook}.

\begin{lemma}
For $\Sigma \in \PD_n$ we have
$\Marg{\CIS{\Sigma},k} = \CIS{\Marg{\Sigma,k}}$ and
$\Cond{\CIS{\Sigma},k} = \CIS{\Cond{\Sigma,k}}$.
In particular, minors of realizable CI~relations are realizable.
\end{lemma}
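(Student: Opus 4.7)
The plan is to verify the two claimed equalities of CI relations by direct computation of the relevant almost-principal minors, using the Schur complement formula to handle the conditioning case. The "in particular" clause then falls out because marginals and Schur complements of positive definite matrices are again positive definite.

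For the marginalization equality, I would simply observe that a statement $(ij|K) \in \A_{N\setminus k}$ is encoded by the vanishing of $\det(\Sigma_{iK,jK})$. Since $i,j \in N \setminus k$ and $K \subseteq N \setminus ijk$, the indices $iK$ and $jK$ do not involve $k$, so this submatrix of $\Sigma$ coincides with the same submatrix of the principal submatrix $\Sigma_{N \setminus k} = \Marg{\Sigma,k}$. Hence $(ij|K) \in \CIS{\Sigma}$ if and only if $(ij|K) \in \CIS{\Marg{\Sigma,k}}$, which is exactly the stated equality.

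For the conditioning equality, the key input is the Schur complement determinant identity. Writing $L \subseteq N\setminus ijk$, I would apply the block determinant formula to the matrix $\Sigma_{iLk,jLk}$, separating off the row and column indexed by $k$ (whose diagonal entry $\sigma_{kk}$ is nonzero by positive definiteness). This yields
\[
\det(\Sigma_{iLk,jLk}) = \sigma_{kk}\cdot \det\!\bigl(\Sigma_{iL,jL} - \sigma_{kk}^{-1}\Sigma_{iL,k}\Sigma_{k,jL}\bigr) = \sigma_{kk}\cdot \det\!\bigl((\Cond{\Sigma,k})_{iL,jL}\bigr),
\]
where the second equality uses the explicit form of the Schur complement given right before the lemma. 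Since $\sigma_{kk}>0$, the vanishing of the left-hand side is equivalent to that of $\det((\Cond{\Sigma,k})_{iL,jL})$. Translating via the definitions, $(ij|L) \in \Cond{\CIS{\Sigma},k}$ means $(ij|kL) \in \CIS{\Sigma}$, which by the identity is equivalent to $(ij|L) \in \CIS{\Cond{\Sigma,k}}$, giving the desired equality.

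The main (minor) obstacle is making sure that $\Marg{\Sigma,k}$ and $\Cond{\Sigma,k}$ really lie in $\PD_{N \setminus k}$ so that $\CIS{\cdot}$ is defined on them. Principal submatrices of positive definite matrices are positive definite, and the Schur complement of a positive definite matrix with respect to a principal block is positive definite (standard; also visible from the block $LDL^\top$ decomposition). With this, iterating the two operations shows that any minor of $\CIS{\Sigma}$ is of the form $\CIS{\Sigma'}$ for some $\Sigma' \in \PD_{N'}$, proving the final assertion on realizability.
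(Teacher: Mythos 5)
Your proof is correct and complete; the paper itself states this lemma without proof, deferring to the identification of marginalization with taking principal submatrices and of conditioning with Schur complements (cited from Sullivant's book just above the lemma), and your verification via the block determinant identity $\det(\Sigma_{iLk,jLk}) = \pm\,\sigma_{kk}\det\bigl((\Cond{\Sigma,k})_{iL,jL}\bigr)$ together with positive definiteness of $\sigma_{kk}$ and of the Schur complement is exactly the standard argument that fills this in. The only cosmetic point is that placing the row and column indexed by $k$ last may introduce a sign in the determinant, which is irrelevant since only its vanishing matters.
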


For any Gaussian distribution, matrix inversion exchanges the
covariance and concentration matrices.  The combinatorial version of
this operation furnishes an involution on CI~relations.

\begin{definition}\label{def:CIdual}
  The \emph{dual} of $\C R \subseteq \A_N$ is
  $ \C R^\dual \defas \set{ (ij|N \setminus ijK) : (ij|K) \in \C R }
  \subseteq \A_N.  $
\end{definition}

This involution turns a covariance matrix $\Sigma$ which is Markovian
for a graph~$G$ into a concentration matrix $K = \Sigma^{-1}$ such
that $K_{ij} = 0$ for all $ij \notin G$. It also exchanges marginal
and conditional~\cite[Lemma~1]{LM07}:

\begin{lemma}\label{lem:dualInverse}
For any $\C R \subseteq \A_N$ and $k \in N$ we have
$\Marg{\C R^\dual,k} = (\Cond{\C R,k})^\dual$.
If $\Sigma$ is positive definite, then $\CIS{\Sigma}^\dual = \CIS{\Sigma^{-1}}$.
In particular, duals of realizable CI~relations are realizable.
\end{lemma}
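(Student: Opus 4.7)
The plan is to prove the three assertions in order. The first is a purely symbolic identity of CI~structures, the second reduces via Jacobi's complementary-minor identity to a determinantal calculation, and the third falls out as a direct consequence of the second.

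For $\Marg{\C R^\dual,k} = (\Cond{\C R,k})^\dual$ I would simply unfold \cref{def:CIminors,def:CIdual}. A statement $(ij|L) \in \A_{N \setminus k}$ lies in $\Marg{\C R^\dual, k}$ iff $(ij|L) \in \C R^\dual$, iff $L = N\setminus ijK$ for some $(ij|K) \in \C R$. Since $L \subseteq N\setminus ijk$, the element $k$ must lie in $K$, so writing $K = k \cup K'$ with $K' \subseteq N \setminus ijk$ one obtains $L = (N\setminus k)\setminus ijK'$ and $(ij|kK') \in \C R$. On the other side, $(ij|L) \in (\Cond{\C R,k})^\dual$ iff $L = (N\setminus k)\setminus ijL'$ for some $(ij|L') \in \Cond{\C R,k}$, iff $(ij|kL') \in \C R$. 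Matching $K' = L'$ identifies the two descriptions.

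For $\CIS{\Sigma}^\dual = \CIS{\Sigma^{-1}}$ the key ingredient is the classical Jacobi identity for complementary minors of an inverse: for invertible $\Sigma$ and equally-sized index sets $A,B \subseteq N$,
\[
  \det(\Sigma^{-1})_{A,B} = \pm \frac{\det \Sigma_{N\setminus B,\, N\setminus A}}{\det \Sigma}.
\]
Taking $A = iL$ and $B = jL$ and noting $N\setminus jL = i \cup (N\setminus ijL)$ and $N\setminus iL = j \cup (N\setminus ijL)$, the complementary minor is precisely the almost-principal minor of $\Sigma$ encoding the CI~statement $(ij|N\setminus ijL)$. Because $\Sigma$ and $\Sigma^{-1}$ are positive definite, all principal minors are positive, so in both cases the rank condition defining $\CIS{\cdot}$ is equivalent to the vanishing of the corresponding almost-principal minor. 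Since $\det \Sigma \ne 0$, the two minors vanish simultaneously, giving $(ij|L) \in \CIS{\Sigma^{-1}} \Leftrightarrow (ij|N\setminus ijL) \in \CIS{\Sigma} \Leftrightarrow (ij|L) \in \CIS{\Sigma}^\dual$.

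The final claim follows immediately: if $\C R = \CIS{\Sigma}$ for some $\Sigma \in \PD_n$, then by~(2) one has $\C R^\dual = \CIS{\Sigma^{-1}}$, and $\Sigma^{-1} \in \PD_n$ realizes $\C R^\dual$. The only delicate point is the index bookkeeping in~(1), where one must recognize that the condition ``$k$ is absent from the outer conditioning set of the dual'' is exactly the condition ``$k$ is present in the original conditioning set'', i.e.\ the definition of conditioning; the algebraic content in~(2) is then a one-line application of Jacobi's identity, with the sign irrelevant since we only care about vanishing.
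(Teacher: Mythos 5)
Your proof is correct. The paper states this lemma without proof, and your argument — unfolding \cref{def:CIminors,def:CIdual} for the minor identity, and applying Jacobi's complementary-minor identity $\det(\Sigma^{-1})_{iL,jL} = \pm\det(\Sigma_{N\setminus jL,\,N\setminus iL})/\det\Sigma$ together with the equivalence of the rank condition and the vanishing of the almost-principal minor in the positive definite case — is exactly the standard argument the authors are implicitly relying on.
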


The final operation of interest is concatenating two independent
Gaussian random vectors $(X_i)_{i \in N}$ and $(Y_i)_{i \in M}$ which
are indexed by disjoint ground sets $N$ and $M$. This is called
\emph{direct sum} in the structure theory of CI~relations
\cite{matus94}.  The corresponding CI relation is as follows.

\begin{definition}
  Let $\C R$ and $\C R'$ be two CI structures on disjoint ground sets
  $N$~and~$M$, respectively.  Their \emph{direct sum} is the
  CI~structure
\begin{alignat*}{2}
  \C R \oplus \C R' :=  &\; \set{ (ij|K) \in \A_{NM} : i \in N, j \in M } \\
    \cup&\; \set{ (ij|KL) \in \A_{NM} : (ij|K) \in \C R, L \subseteq M } \\
    \cup&\; \set{ (ij|KL) \in \A_{NM} : (ij|K) \in \C R', L \subseteq N }
    \subseteq \A_{NM}.
\end{alignat*}
\end{definition}

On the level of covariance matrices, the direct sum imposes a
block-diagonal structure with the summands on the diagonal. For
$\Sigma\in\PD_{N}$ and $\Sigma' \in \PD_{M}$ let
$\Sigma \oplus \Sigma' =
\begin{psmallmatrix}
  \Sigma & 0 \\
  0 & \Sigma'
\end{psmallmatrix}
$.  The following lemma is immediate.
\begin{lemma} \label{l:DirectSum}
For $\Sigma \in \PD_N$ and $\Sigma' \in \PD_M$ we have
$\CIS{\Sigma \oplus \Sigma'} = \CIS{\Sigma} \oplus \CIS{\Sigma'}$.
In particular, the direct sum $\C R \oplus \C R'$ is realizable
if and only if $\C R$ and $\C R'$ are both realizable.
Moreover, the direct sum commutes with duality and minors.
\end{lemma}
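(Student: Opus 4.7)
My plan is to establish the four claims in sequence, with the identity $\CIS{\Sigma \oplus \Sigma'} = \CIS{\Sigma} \oplus \CIS{\Sigma'}$ being the crux; the rest will follow by combinatorial manipulations of the definitions. Throughout, for a statement $(ij|K) \in \A_{N \cup M}$, I decompose $K = K_N \cup K_M$ with $K_N = K \cap N$ and $K_M = K \cap M$, and analyze the almost-principal submatrix $(\Sigma \oplus \Sigma')_{iK, jK}$ via its induced block structure.

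For the identity, when $i \in N$ and $j \in M$, reordering rows as $(i, K_N, K_M)$ and columns as $(K_N, K_M, j)$ exhibits the submatrix with all nonzero entries either in the top $1 + |K_N|$ rows and left $|K_N|$ columns (coming from $\Sigma$) or in the bottom $|K_M|$ rows and right $|K_M| + 1$ columns (coming from $\Sigma'$). These two blocks have disjoint row and column supports and individual ranks at most $|K_N|$ and $|K_M|$, so the total rank is at most $|K|$ and the CI statement holds automatically; this matches the first clause of $\C R \oplus \C R'$. When $i, j \in N$, the same reordering yields a truly block-diagonal matrix with blocks $\Sigma_{iK_N, jK_N}$ and $\Sigma'_{K_M, K_M}$; since $\Sigma'_{K_M, K_M}$ is invertible as a principal submatrix of a PD matrix, the condition $\rk (\Sigma \oplus \Sigma')_{iK, jK} = |K|$ reduces to $\rk \Sigma_{iK_N, jK_N} = |K_N|$, i.e., $(ij|K_N) \in \CIS{\Sigma}$, matching the second clause. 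The case $i, j \in M$ is symmetric.

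The realizability statement now splits. The forward direction is immediate: if $\Sigma$ and $\Sigma'$ realize $\C R$ and $\C R'$ respectively, then $\Sigma \oplus \Sigma'$ realizes $\C R \oplus \C R'$ by the identity just proved. For the converse, iteratively marginalizing away the elements of $M$ in $\C R \oplus \C R'$ recovers $\C R$ (using the commutation with marginals proved in the next step), and minors of realizable relations are realizable by the preceding lemma.

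For commutation with duality and minors, I unfold definitions via the same case split. For duality: an element $(ij|K) \in \C R \oplus \C R'$ with $i, j \in N$, $K = K_N \cup L$, $L \subseteq M$ and $(ij|K_N) \in \C R$ has dual $(ij \mid (N \setminus ijK_N) \cup (M \setminus L))$, which lies in the second clause of $\C R^\dual \oplus \C R'^\dual$ because $(ij \mid N \setminus ijK_N) \in \C R^\dual$. The case $i \in N$, $j \in M$ is trivial, as both sides contain all such statements, and the reverse inclusion follows from involutivity of $\dual$. For minors, an analogous case split shows $\Marg{\C R \oplus \C R', k} = \Marg{\C R, k} \oplus \C R'$ and $\Cond{\C R \oplus \C R', k} = \Cond{\C R, k} \oplus \C R'$ for $k \in N$, and symmetrically for $k \in M$; iteration yields commutation with arbitrary minors. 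The only mildly substantive step in the proof is the rank bound in the mixed case $i \in N$, $j \in M$ of the first identity; all other verifications are clean bookkeeping once the block structures are exposed.
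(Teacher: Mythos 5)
Your proof is correct. The paper states this lemma without proof, treating it as a routine verification, and your block-decomposition argument (reordering $(\Sigma\oplus\Sigma')_{iK,jK}$ according to the partition $K=K_N\cup K_M$ and reading off the rank case by case, then propagating the identity through duality and minors by unfolding definitions) is exactly the intended computation; the only point worth making explicit in the mixed case is that $\rk(\Sigma\oplus\Sigma')_{iK,jK}\geq |K|$ holds automatically because the principal submatrix indexed by $K$ is nonsingular, so your upper bound of $|K|$ indeed forces equality.
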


\begin{lemma} \label{SmoothnessDirectSum}
For $V = \M(\C R)$ and $V' = \M(\C R')$ on disjoint ground sets
$N$ and $M$, respectively, the direct sum $U = V \oplus V'$ on
$NM$ is smooth if and only if $V$ and $V'$ are both smooth.
\end{lemma}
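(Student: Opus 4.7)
The plan is to identify $U = V \oplus V'$ with the Cartesian product $V \times V'$ via a linear embedding into ambient symmetric-matrix space, and then to invoke the product principle for smoothness.

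Concretely, I would consider the $\RR$-linear injection
\[
  \Phi \colon \Sym^2(\RR^N) \times \Sym^2(\RR^M) \to \Sym^2(\RR^{NM}), \quad (A,A') \mapsto A \oplus A',
\]
whose image $L$ is the linear subspace of block-diagonal symmetric matrices with blocks indexed by $N$ and~$M$. Since a block-diagonal symmetric matrix is positive definite iff both of its diagonal blocks are, $\Phi$ restricts to a bijection $\PD_N \times \PD_M \to L \cap \PD_{NM}$. By the description of the direct sum at the level of matrices underlying \cref{l:DirectSum}, it further restricts to a bijection $V \times V' \to U$. Because $\Phi$ is a linear isomorphism onto the linear subspace $L$, this restriction is a semi-algebraic isomorphism that identifies $V \times V' \subseteq \Sym^2(\RR^N) \times \Sym^2(\RR^M)$ with $U \subseteq \Sym^2(\RR^{NM})$, and such identifications transport pointwise smoothness faithfully (a subset of the linear subspace $L$ is a smooth submanifold of $\Sym^2(\RR^{NM})$ iff it is a smooth submanifold of $L$).

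What remains is the product principle for smoothness: $V \times V'$ is smooth at $(\Sigma, \Sigma')$ if and only if $V$ is smooth at $\Sigma$ and $V'$ is smooth at $\Sigma'$. One direction follows by multiplying local smooth parametrizations of $V$ at $\Sigma$ and of $V'$ at $\Sigma'$; the converse is the Jacobian criterion applied to the product, using that the Zariski closure of $V \times V'$ is the product of the closures and that both the dimension and the Zariski tangent space of a product decompose as the direct sum of the corresponding data for the factors. Applying this equivalence at every point of $U$ yields both directions of the lemma.

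The only subtlety worth flagging is pinning down which notion of smoothness is in force: whether \emph{smooth} means a regular point of the Zariski closure, a smooth semi-algebraic submanifold of $\Sym^2(\RR^n)$, or a differentiable manifold point. In all three readings the product principle holds for the same reason — (Zariski) tangent spaces split as direct sums of factor tangent spaces and dimensions add — so the argument above establishes the lemma uniformly, and no case distinction is actually required.
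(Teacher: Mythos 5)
Your argument is correct and follows essentially the same route as the paper: identify $U$ with the product $V \times V'$ via the block-diagonal embedding and conclude from the facts that irreducibility, dimension and (Zariski) tangent spaces all decompose along the two factors. The paper's proof simply states these three decomposition facts and invokes the definition of smoothness as equality of tangent-space dimension and model dimension, so your more explicit treatment of the linear embedding and the product principle adds detail but no new idea.
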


\begin{proof}
  From the block-diagonal shape of matrices in $U$ and
  \cref{l:DirectSum} it follows that:
\begin{enumerate}
\item $V$ and $V'$ are irreducible if and only if $U$ is
  irreducible,

\item $T_{\Sigma \oplus \Sigma'} U = T_{\Sigma} V \oplus T_{\Sigma'} V'$,
  and

\item $\dim U = \dim V + \dim V'$.
\end{enumerate}
Given irreducibility, smoothness means equality of the tangent space
dimension to the model dimension.  Therefore the smoothness conditions
are equivalent.
\end{proof}

\begin{remark}
  Any direct summand of a CI~relation is a marginalization.
  Marginalizations in general need not preserve smoothness, as
  \cref{ex:SmoothNonminor} below shows.  But, one direction of
  \cref{SmoothnessDirectSum} yields that a direct summand of a smooth
  model is smooth.  Consequently, the non-zero entries in off-diagonal
  blocks are obstructions to smoothness of marginalizations.
\end{remark}

The corresponding operations on graphs have been explained
in~\cite{MatusMinors}. For a graph $G$ and a vertex~$k$, write
$\Marg{G,k}$ for the graph~$G$ where vertex~$k$ and all incident
edges are deleted and $\Cond{G,k}$ for the graph~$G$ where
vertex~$k$ is deleted and all vertices previously adjacent
to~$k$ are connected to form a clique. The direct sum $G \oplus G'$
of graphs $G, G'$ on disjoint ground sets $N, M$, respectively,
consists of the disjoint unions of the vertex and edge sets of
$G$ and $G'$ forming an undirected graph on~$NM$.
The operations on CI~relations, positive definite matrices and
graphs are all aligned:

\begin{lemma}\label{l:graph-minor}
Let $G$ be a graph on vertex set $N$ and $k \in N$. We have
$\Marg{\CIS{G},k} = \CIS{\Cond{G,k}}$,
$\Cond{\CIS{G},k} = \CIS{\Marg{G,k}}$ and
$\CIS{G \oplus G'} = \CIS{G} \oplus \CIS{G'}$.
In particular, Markov relations are closed under forming minors
and direct sums.
\end{lemma}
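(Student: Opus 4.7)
The plan is to unfold the definitions of both sides of each of the three identities and verify them by elementary arguments about paths and vertex separation in graphs. Since $\CIS{G}$ is defined by separation, every claim reduces to a statement about which vertex sets cut off $i$ from $j$ in some auxiliary graph.

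For the identity $\Marg{\CIS{G},k} = \CIS{\Cond{G,k}}$, I would fix $(ij|K)$ with $i,j,K \subseteq N\setminus k$ and show that $K$ separates $i$ from $j$ in $G$ if and only if $K$ separates $i$ from $j$ in $\Cond{G,k}$. The key observation is a bijection-like correspondence between paths: every path in $\Cond{G,k}$ using one of the new clique edges $uv$ (with $u,v \in N_G(k)$) lifts to a path in $G$ through $k$ by inserting $k$ between $u$ and $v$, and conversely every $G$-path passing through $k$ shortcuts via such a new edge in $\Cond{G,k}$. Since $k \notin K$, either operation preserves whether $K$ is hit; hence the separation properties coincide. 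The identity $\Cond{\CIS{G},k} = \CIS{\Marg{G,k}}$ is simpler: $(ij|K) \in \Cond{\CIS{G},k}$ means $kK$ separates $i$ from $j$ in $G$, which, since $\Marg{G,k}$ is simply $G$ with $k$ and its incident edges deleted, is readily seen to be equivalent to $K$ separating $i$ from $j$ in $\Marg{G,k}$.

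For the direct sum identity $\CIS{G\oplus G'} = \CIS{G} \oplus \CIS{G'}$, I would split into cases according to the components of $i$ and $j$. Since $G \oplus G'$ has no edges between $N$ and $M$, vertices in different components are trivially separated by any~$K$, matching the first clause of the definition of $\C R \oplus \C R'$. If $i,j \in N$, any path in $G \oplus G'$ between them lies entirely in $G$, so $K$ separates them in $G \oplus G'$ if and only if $K \cap N$ separates them in $G$, which matches the second clause; the symmetric case $i,j \in M$ matches the third. Putting the three matches together gives set-theoretic equality.

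I expect no serious obstacle here; the subtlest point is the first identity, where the ``graph conditioning'' operation $\Cond{G,k}$ of completing $N_G(k)$ into a clique is not immediately transparent as the graph-level counterpart of CI marginalization, so the path-lifting argument must be carried out carefully. The concluding ``In~particular'' statement is then immediate: repeated application of the first two identities shows that every minor of a Markov relation is itself Markov (with respect to a graph obtained by iterated marginalization/conditioning), and the third identity shows the same for direct sums.
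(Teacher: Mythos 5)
Your proof is correct. The paper does not actually prove \cref{l:graph-minor}: it states the lemma without proof, deferring to \cite{MatusMinors} for the graph operations, so your write-up supplies an argument the paper omits. All three verifications are the standard ones and each reduces, as you say, to comparing separation in $G$ with separation in an auxiliary graph; the second identity ($kK$ separates in $G$ iff $K$ separates in $\Marg{G,k}$) and the direct-sum identity are immediate, and your case split for $\CIS{G\oplus G'}$ matches the three clauses of the definition of $\C R\oplus\C R'$ exactly.

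One very minor point in the first identity: a path in $\Cond{G,k}$ may use \emph{several} of the new clique edges, in which case inserting $k$ at each of them produces a walk in $G$ (visiting $k$ more than once) rather than a path. This is harmless --- a walk from $i$ to $j$ avoiding $K$ contains a path from $i$ to $j$ avoiding $K$, so non-separation still transfers --- but it is worth phrasing the lifting step in terms of walks, or extracting a path at the end, to keep the argument airtight. The converse direction is fine as stated, since a path visits $k$ at most once and the shortcut is again a path.
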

Duality has no counterpart in undirected graphical models. If undirected
graphical models are referred to as ``concentration models'', their duals
are ``covariance models''. Sometimes they are written with bidirected
edges instead of undirected ones.

\subsection{Double Markovian models}

In a double Markovian relation a pair of graphs $(G,H)$ specifies vanishing
conditions via $G$ on the concentration matrix and via $H$ on the covariance
matrix. This can be expressed using duality on ordinary Markov relations:

\begin{definition}
Let $G$ and $H$ be undirected graphs on vertex set~$N$. Their \emph{double
Markov relation} is $\CIS{G,H} = \CIS{G} \cup \CIS{H}^\dual$.
\end{definition}

By \Cref{lemma:Markovian}, $\M(G,H) = \M(\CIS{G,H})$ and the following
is similar to \cref{l:graph-minor}.

\begin{lemma} \label{l:DM-minorclosed}
  Let $G$ and $H$ be graphs on the vertex set~$N$. Then
  \begin{equation*}
    \CIS{G,H}^\dual = \CIS{H,G}, \quad
    \Marg{\CIS{G,H},k} = \CIS{\Cond{G,k},\Marg{H,k}}, \quad
    \Cond{\CIS{G,H},k} = \CIS{\Marg{G,k},\Cond{H,k}}.
  \end{equation*}
  Hence the class of double Markov relations is minor-closed.
\end{lemma}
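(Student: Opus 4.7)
The plan is to reduce each claim to \cref{l:graph-minor} and \cref{lem:dualInverse} by exploiting that the elementary operations $\Marg{-,k}$, $\Cond{-,k}$, and $(-)^\dual$ all distribute over unions (the first two because they are defined by pointwise membership conditions on statements, the third by its definition). The only mild computation is verifying one dual identity not explicitly recorded above.

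First I would address $\CIS{G,H}^\dual = \CIS{H,G}$. Expanding the definition gives $\CIS{G,H}^\dual = (\CIS{G} \cup \CIS{H}^\dual)^\dual$. Since duality is an involution on $\A_N$ (directly from \cref{def:CIdual}), distributing over the union yields $\CIS{G}^\dual \cup \CIS{H} = \CIS{H,G}$. Next, for the marginal I would write
\[
  \Marg{\CIS{G,H},k} = \Marg{\CIS{G},k} \cup \Marg{\CIS{H}^\dual, k}.
\]
The first summand is $\CIS{\Cond{G,k}}$ by \cref{l:graph-minor}. For the second, \cref{lem:dualInverse} gives $\Marg{\CIS{H}^\dual,k} = (\Cond{\CIS{H},k})^\dual$, and then applying \cref{l:graph-minor} again produces $\CIS{\Marg{H,k}}^\dual$. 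Assembling the pieces yields $\CIS{\Cond{G,k}, \Marg{H,k}}$.

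The conditional statement is then obtained either by repeating the above argument with the roles reversed, or more elegantly by conjugating with duality. Specifically, applying $(-)^\dual$ to the identity in \cref{lem:dualInverse} and using involutivity gives the companion identity $\Cond{\C R^\dual,k} = (\Marg{\C R,k})^\dual$. Combined with distributivity, this yields
\[
  \Cond{\CIS{G,H},k} = \Cond{\CIS{G},k} \cup \Cond{\CIS{H}^\dual,k} = \CIS{\Marg{G,k}} \cup \CIS{\Cond{H,k}}^\dual,
\]
which by definition is $\CIS{\Marg{G,k}, \Cond{H,k}}$. Minor-closedness then follows by iterating: every minor of $\CIS{G,H}$ is obtained by a finite sequence of one-vertex marginals and conditionals, and each step produces another double Markov relation on a graph pair computed via the corresponding graph-level operations. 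There is no real obstacle here — the only thing to be careful about is the bookkeeping that duality swaps marginalization and conditioning, which is exactly why $G$ and $H$ exchange their $\Marg{}/\Cond{}$ roles on the right-hand sides.
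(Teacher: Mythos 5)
Your proof is correct and is exactly the argument the paper intends (the paper omits it, remarking only that it is ``similar to'' \cref{l:graph-minor}): decompose $\CIS{G,H} = \CIS{G} \cup \CIS{H}^\dual$, use that duality, marginalization and conditioning distribute over unions, and invoke \cref{l:graph-minor} and \cref{lem:dualInverse}. Your derivation of the companion identity $\Cond{\C R^\dual,k} = (\Marg{\C R,k})^\dual$ from involutivity of duality is the right way to handle the one case not explicitly recorded.
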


All Markov relations are realizable and their models are smooth.
These desirable geometric properties fail to hold for double Markov
models. The CI~relation $\CIS{G,H}$ gives only partial information
about the geometry of the model. It is incomplete in the sense that
$\CIS{G,H}$ does not contain all CI~statements which are true on the
model~$\M(G,H)$. The remainder of this section contains examples of
pathological behavior in the double Markovian setting.

Our first example of this is a double Markov relation which is not
even a semigraphoid because it violates the following basic rule,
which is satisfied by any probability distribution (Gaussian or not)
and therefore must hold on the model $\M(G,H)$:
\[
  (12|) \land (13|2) \; \Rightarrow \; (13|) \land (12|3).
\]

\begin{example}
  \label{ex:veryNotRealizable}
  Consider the two graphs $G = \graph4{Eij,Ejk}$ and
  $H = \graph4{Eik}$, where the vertices are labeled $1,2,3,4$,
  clockwise starting at the top left.  The vertex $4$ can be replaced
  by any graph as long as it is not connected to $123$.  Because
  $(13|) \not\in \CIS{G}$ and $(13|24) \not\in \CIS{H}$,
  $(13|) \not\in \CIS{G,H}$.  However, $(13|2)$ holds in $G$ and
  $(12|L)$ holds in~$H$ for any $L\subseteq 34$.  But $(12|)$ and
  $(13|2)$ without $(13|)$ in $\CIS{G,H}$ contradict the semigraphoid
  property, because by contraction
  $(12|) \land (13|2) \Rightarrow (1,23|)$ and then by decomposition
  $(1,23|) \Rightarrow (13|)$ and by weak union
  $(1,23|) \Rightarrow (12|3)$.
\end{example}

The semigraphoid closure of the CI structure in
\cref{ex:veryNotRealizable} (with $4$ being an isolated vertex)
is~$\A_4$ and its model consists only of the identity matrix.  As
explained by \Cref{c:trivialModel}, this happens because the edge sets
of $G$ and $H$ are disjoint.

The next example uses the weak transitivity axiom
$(12|) \land (12|3) \Rightarrow (13|) \lor (23|)$
to construct a family of graph pairs $(G,H)$ whose model is non-smooth.
The basic idea is that the logical OR in the conclusion of weak transitivity
produces two components of the model.  We verify that the CI model of
$\set{(12|), (12|3)}$ has two irreducible components which intersect
in the positive definite cone.  In particular, this model is not
smooth.

\begin{example}
  \label{ex:Nonsmooth} Pick
  $G = H = \graph4{Eik,Ejk}$ which connects $1$ with $2$ via $3$.
  Making only small changes to the following arguments, the fourth
  node can be replaced by any (possibly empty) graph that is not
  connected to $123$.  The CI structure is
  \begin{gather*}
    \CIS{G,H} = (\A_4
      \setminus \set{ (13|L) : L \subseteq 24 })
      \setminus \set{ (23|L) : L \subseteq 14 }.
  \end{gather*}
  In particular the formula
  $(12|) \land (12|3) \land \lnot(13|) \land \lnot(23|)$ holds for
  $\C R = \CIS{G,H}$, which violates weak transitivity.
  Because it violates weak transitivity, $\C R$ is not a gaussoid and
  not realizable by a regular Gaussian distribution.  There are two
  gaussoid extensions of $\C R$ to consider: $\C R_1$~which adds $(13|L)$
  and $\C R_2$ which adds $(23|L)$, for all $L$, respectively, to $\C R$.
  These extensions are isomorphic by exchanging the roles of $1$~and~$2$.
  They are Markov relations corresponding to the complete graph with one
  edge removed. Hence, they are realizable and their models are irreducible
  and smooth by \Cref{p:MarkovianCI}.
  However, the model of $\C R$ consists of two copies of this smooth model,
  intersecting at the identity matrix, which makes it a singular~point.
\end{example}

Unlike being double Markovian, being double Markovian with a smooth
model is not minor-closed. Smoothness fails because marginalizations
of irreducible models can be reducible.

\begin{example} \label{ex:SmoothNonminor}
  The two graphs $G = \graph4{Eil,Ejk,Ekl}$ and $H = \graph4{Eik,Ejk}$
  impose the following relations on a positive definite $4 \times 4$ matrix
  $\Sigma = (\sigma_{ij})$ in their double Markovian model:
\begin{gather*}
  \sigma_{12} = \sigma_{14} = \sigma_{24} = \sigma_{34} = 0, \; \text{from $\CIS{H}^\dual$}, \\
  \sigma_{13} \sigma_{23} \sigma_{44} = 0, \; \sigma_{13} \sigma_{22} \sigma_{44} = 0, \;
    \text{from $\CIS{G}$}.
\end{gather*}
The bounded model $\M_1(G,H) = \M(G,H) \cap \PD_{4,1}$ is a curve segment
parametrized by $\sigma_{23} \in (-1,1)$ since $\sigma_{13}$ is forced to
zero on~$\PD_{4,1}$.
The marginal CI~structure on $123$ is $\Marg{\CIS{G,H},4} =
\CIS{\Cond{G,4},\Marg{H,4}} = \CIS{\graph3{Eik,Ejk},\graph3{Eik,Ejk}}$,
the one from \cref{ex:Nonsmooth}. Its model has \emph{two} components
which intersect in the identity matrix and is therefore not smooth.

To understand this phenomenon one has to distinguish the model of the
marginal CI~structure, $\M(\Cond{G,4},\Marg{H,4})$, from the pointwise
marginalization of~$\M(G,H)$.  What is discussed above is the former.
It is reducible and properly contains the latter model as one of its
two components.

The ``unexpected'' component of $\M(\Cond{G,4},\Marg{H,4})$ arises
from semi-definite matrices on the boundary of $\M(G,H)$ which become
regular after marginalization.  Namely, the two equations
\[
  \sigma_{13} \sigma_{23} \sigma_{44} = 0, \; \sigma_{13} \sigma_{22} \sigma_{44} = 0
\]
imply $\sigma_{13} = 0$ on positive definite matrices, but there are
semi-definite solutions to them where
\begin{inparaenum}
\item $\sigma_{22} = 0$ and $\sigma_{23} = 0$, or
\item $\sigma_{44} = 0$, and $\sigma_{13}$ and $\sigma_{23}$ are arbitrary.
\end{inparaenum}
Thus there are three types of solutions:
\[
  \begin{pmatrix}
    \sigma_{11} & 0 & 0 & 0 \\
    0 & \sigma_{22} & \sigma_{23} & 0 \\
    0 & \sigma_{23} & \sigma_{33} & 0 \\
    0 & 0 & 0 & \sigma_{44}
  \end{pmatrix},
  \qquad
  \begin{pmatrix}
    \sigma_{11} & 0 & \sigma_{13} & 0 \\
    0 & 0 & 0 & 0 \\
    \sigma_{13} & 0 & \sigma_{33} & 0 \\
    0 & 0 & 0 & \sigma_{44}
  \end{pmatrix},
  \qquad
  \begin{pmatrix}
    \sigma_{11} & 0 & \sigma_{13} & 0 \\
    0 & \sigma_{22} & \sigma_{23} & 0 \\
    \sigma_{13} & \sigma_{23} & \sigma_{33} & 0 \\
    0 & 0 & 0 & 0
  \end{pmatrix}.
\]
The first type is visible in $\PD_4$ and in $\M(G,H)$ and its
marginalization forms one of the components of
$\M(\Cond{G,4},\Marg{H,4})$.  The second type of solutions is not
visible in the marginalization because it contains no $\PD_{3}$
matrices.  Marginalizing~$4$ from the CI structure removes the last
row and column of these matrices and imposes additional constraints,
in this case $\sigma_{13}\sigma_{23} = 0$.  This turns the third type
of solution positive definite and reduces the dimension by one.  In
this case, both the first and second component arise from this
2-dimensional boundary component.
\end{example}

Next, we present a family of double Markov relations which are
realizable but whose model is singular at the identity matrix.
Moreover, this gives an infinite family of realizable, non-smooth
models all of whose proper minors are realizable and smooth.

\begin{example}
  \label{ex:SelfdualNonsmooth}
  By~\cite[Proposition~4.2]{drton2010smoothness}, the model of
  $\C R = \set{ (12|), (12|N \setminus 12) }$ is singular at the
  identity matrix.  It~can be represented by the double Markov relation~$\CIS{G,G}$
  where $G$ is the graph on $N$ whose only non-edge is~$12$.
  If~$k=1,2$, the marginalization and conditioning of $G$ by~$k$ are
  both the complete graph, so their double Markovian model is the
  entire cone $\PD_n$ and smooth.  If~$k \in N \setminus 12$, then
  $\Cond{G,k}$ is a complete graph and $\Marg{G,k}$ is another graph
  on $N \setminus k$ whose only non-edge is~$12$.  Since a complete
  graph imposes no relations, we find that $\Marg{\CIS{G,G},k} =
  \CIS{\Marg{G,k}}$ is a Markov relation and thus its model is smooth
  by \cref{p:MarkovianCI}. It is clear that all further minors of
  these two cases yield smooth models as well. This gives infinitely
  many examples of minor-minimal, non-smooth double Markovian
  CI~structures.
  Explicit~computation in \texttt{Macaulay2} for $n=4,5,6$ verifies
  that (the~Zariski closure of) the model is an irreducible variety
  and this shows that singularities are not always caused by an
  intersection of irreducible components.
  In~\cref{ex:pathConnSingular} we examine the singular locus
  of $\M(G,G)$ further.  It~turns out to be another conditional
  independence model.
\end{example}

We have no general proof for the Zariski-irreducibility of the models
in the preceding example when $n \ge 7$. It was shown in
\cite[Lemma~6.4]{Gaussant} that the CI~structures
$\set{(12|), (12|N \setminus 12)}$ are realizable by Gaussian
distributions for all~$n \ge 4$.  Thus the models are irreducible in
the finite ``lattice of CI relations'' studied by Drton and Xiao in
\cite[Section~2.1]{drton2010smoothness}.  This is a coarsening of the
lattice of the usual Zariski topology induced on~$\PD_n$, so
CI-irreducibility provides a necessary condition for
Zariski-irreducibility.

There is a Galois~connection between CI~structures
$\C R \subseteq \A_N$ and Gaussian
CI~models~$\M(\C R) \subseteq \PD_n$.  The closed CI~structures under
this connection are termed \emph{complete relations} by Drton and
Xiao, and their \cite[Theorem~2.2]{drton2010smoothness} characterizes
them as the intersections of realizable gaussoids.
The completion of a double Markovian relation $\CIS{G,H}$ adds all
CI~statements which hold on every matrix in its model
$\M(G,H)$. For~single Markovian relations $\CIS{G}$ is always complete
because it is realizable by \Cref{p:MarkovianCI} and its elements can
be read off from~$G$.

On the other hand, \Cref{ex:veryNotRealizable} exhibits a pair of
graphs $G$ and $H$ whose CI~structure does not satisfy the
semigraphoid property.  Since the semigraphoid axiom holds for every
Gaussian distribution, it follows that $\CIS{G,H}$ need not be
complete. Moreover, even if the set $\CIS{G,H}$ is closed under the
compositional graphoid axioms, which hold for all Gaussians (called
\emph{semigaussoid axioms} by Drton and Xiao), it may still be
incomplete:

\begin{example} \label{ex:Incomplete}
  Let $G = \graph4{Eij,Eil,Ejk,Ekl}$ and $H = \graph4{Eik,Eil,Ejk,Ejl}$ both
  be $4$-cycles. We have $\CIS{G} = \set{ (13|24), (24|13) }$ and $\CIS{H}^\dual
  = \set{ (12|), (34|) }$. Their union $\C R$ is the set of antecedents to
  an instance of inference rule \cite[Lemma~10~(17)]{LM07}:
  \[
    (12|) \wedge (34|) \wedge (13|24) \wedge (24|13) \Rightarrow (13|)
  \]
  This formula is valid for all regular Gaussians. Since $\C R$ is not closed
  under this rule, it is not realizable by a positive definite matrix.
  It~is not complete either because the inference rule~(17) is a \emph{Horn clause},
  i.e., it has a unique consequence~$(13|)$ which every realizable
  superset of $\C R$ and hence $\C R$ as their intersection would have
  to contain if it were complete.

  However, $\C R$ is a gaussoid (therefore closed under the
  compositional graphoid axioms) and it is realizable by a complex
  matrix with non-vanishing principal minors. Consequently, working
  with equations in a computer algebra system like \texttt{Macaulay2}, one
  cannot deduce any further CI~statements from $\CIS{G,H}$.  Positive
  definiteness has to be taken into account.

  The completion of $\C R$ in the positive definite setting can be
  computed as its closure under the semigaussoid axioms
  \cite[Definition~1~(7)--(9)]{LM07} and the higher inference rules
  \cite[Lemma~10 (17)--(21)]{LM07} of Lněnička and Matúš. It equals
  $\ol{\C R} = \A_4 \setminus \left(\set{ (14|L) : L \subseteq 23 }
    \cup \set{ (23|L) : L \subseteq 14 }\right)$.  This structure is a
  self-dual Markov relation and hence can be written as the relation
  of two identical graphs $(J,J)$ such that $\CIS{J} =
  \CIS{J}^\dual$. Indeed $J = G \cap H$ gives $\CIS{J,J} = \ol{\C R}$.
  This~shows that nevertheless
  $\M(G,H) = \M(\ol{\C R}) = \M(J, J) = \M(G\cap H)$ is smooth.
\end{example}

\begin{qu} \label{qu:CIcomplete}
Is there a combinatorial criterion similar to separation in undirected
graphs to derive a complete set of valid CI~statements for $\M(G,H)$?
\end{qu}

A first step is \Cref{c:trivialModel} where the triviality of the
model is characterized by disjointness of the edge sets. In this case,
every CI~statement is a consequence of the statements in~$\CIS{G,H}$.

\section{Geometry of the models \texorpdfstring{$ \M(G,H) $}{M(G,H)}}
\label{sec:geometry}

Our study of the smoothness of double Markovian models starts with the
known observation that one may as well work with the bounded set of
correlation matrices.
\begin{lemma}[{\cite[Lemma~3.2]{drton2010smoothness}}] \label{l:SmoothnessCorrelation}
  The set $\M(\C R)$ is a smooth submanifold of $\PD_n$ if and
  only if $\M_1(\C R)$ is a smooth submanifold.
\end{lemma}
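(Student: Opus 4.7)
The plan is to exhibit an explicit diffeomorphism of $\PD_n$ with a product $\PD_{n,1} \times \RR^n_{>0}$ under which $\M(\C R)$ corresponds to $\M_1(\C R) \times \RR^n_{>0}$, and then argue that smoothness is preserved and reflected by such a correspondence.

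First, I would define the \emph{normalization map}
\[
  \Phi\colon \PD_n \longrightarrow \PD_{n,1} \times \RR^n_{>0}, \qquad
  \Sigma \longmapsto \bigl( D^{-1} \Sigma D^{-1},\; (\sigma_{11},\ldots,\sigma_{nn}) \bigr),
\]
where $D = \operatorname{diag}(\sqrt{\sigma_{11}},\ldots,\sqrt{\sigma_{nn}})$. Since the diagonal entries of a positive definite matrix are strictly positive, the square roots are smooth functions on $\PD_n$, so $\Phi$ is smooth. Its inverse sends $(\Sigma', (d_1,\ldots,d_n))$ to $D \Sigma' D$ with $D = \operatorname{diag}(\sqrt{d_i})$, which is likewise smooth. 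Hence $\Phi$ is a diffeomorphism.

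The second step is to show that $\Phi(\M(\C R)) = \M_1(\C R) \times \RR^n_{>0}$. The key observation is that conditional independence is invariant under conjugation by a positive diagonal matrix: for $D = \operatorname{diag}(d_1,\ldots,d_n)$ with $d_i > 0$ one has
\[
  (D\Sigma D)_{iK,jK} = D_{iK,iK}\,\Sigma_{iK,jK}\,D_{jK,jK},
\]
so $\operatorname{rk}(D\Sigma D)_{iK,jK} = \operatorname{rk}\Sigma_{iK,jK}$, giving $\CIS{D\Sigma D} = \CIS{\Sigma}$. Consequently $\Sigma \in \M(\C R)$ if and only if its correlation matrix lies in $\M_1(\C R)$, regardless of the values of the positive diagonal entries. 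This identifies $\M(\C R)$ with $\M_1(\C R) \times \RR^n_{>0}$ under $\Phi$.

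Finally, smoothness of submanifolds is a local property preserved by diffeomorphisms, so $\M(\C R)$ is a smooth submanifold of $\PD_n$ if and only if $\M_1(\C R) \times \RR^n_{>0}$ is a smooth submanifold of $\PD_{n,1} \times \RR^n_{>0}$. Since $\RR^n_{>0}$ is an open smooth manifold, this product is a smooth submanifold exactly when $\M_1(\C R)$ is. There is no real obstacle here beyond verifying the diagonal-invariance of CI rank conditions, which is immediate; the content of the lemma is that this scaling action decouples the diagonal (variance) directions from the correlation directions cleanly.
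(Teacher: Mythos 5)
Your proposal is correct and follows essentially the same route as the paper: both use the normalization diffeomorphism $\PD_n \cong \RR^n_{>0} \times \PD_{n,1}$ together with the invariance of the CI rank conditions under conjugation by positive diagonal matrices to identify $\M(\C R)$ with $\RR^n_{>0} \times \M_1(\C R)$. The only point you compress is the direction ``product smooth $\Rightarrow$ factor smooth,'' which the paper justifies by exhibiting the slice $\set{(1,\ldots,1)} \times \M_1(\C R)$ as the preimage of a regular value of the projection to $\RR^n_{>0}$ (using smooth paths of diagonal matrices to fill out the tangent directions); this is a standard fact, so your argument stands.
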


\begin{proof}
  The map $\PD_n \rightarrow \RR_{>0}^n \times \PD_{n,1}$, sending
  $\Sigma$ to the pair consisting of its diagonal and its associated
  correlation matrix, is a diffeomorphism. Consider the following
  commutative diagram:
  \begin{equation*}
    \begin{tikzcd}
      \PD_n \arrow[r, "\cong"]
      & \RR_{>0}^n \times \PD_{n,1}
      & \set{(1,1, \ldots, 1)} \times \PD_{n,1} \arrow[l, hook]
      & \PD_{n,1} \arrow[l, equal] \\
      \M(\C R) \arrow[u, hook] \arrow[r, "\cong"']
      & \RR_{>0}^n \times \M_1(\C R) \arrow[u, hook]
      & \set{(1,1, \ldots, 1)} \times \M_1(\C R) \arrow[l, hook] \arrow[u, hook]
      & \M_1(\C R) \arrow[l, equal] \arrow[u, hook].
    \end{tikzcd}
  \end{equation*}
  All maps are topological embeddings or homeomorphisms, and the upper
  row consists of diffeomorphisms and embeddings of smooth
  submanifolds. If $\M(\C R)$ is a smooth submanifold,
  $\RR_{>0}^n \times \M_1(\C R)$ inherits a smooth manifold structure
  making the second vertical inclusion an embedding of a smooth
  submanifold.  Now, the product
  $\set{(1,1, \ldots, 1)} \times \M_1(\C R)$ is the preimage of a
  regular value under the projection map
  $\RR_{>0}^n \times \M_1(\C R) \rightarrow \RR_{>0}^n$. This again
  can be seen from the composition
  \begin{equation*}
    \M(\C R) \hookrightarrow \PD_n \overset{\cong}{\rightarrow} \RR_{>0}^n \times \PD_{n,1} \rightarrow \RR_{>0}^n,
  \end{equation*}
  sending $\Sigma \in \M(\C R)$ to its diagonal. The claim follows
  because for each $\Sigma \in \M(\C R)$ having only ones on the
  diagonal and an arbitrary positive diagonal matrix $D$, we have
  $D \Sigma D \in \M(\C R)$ (by the proof of
  \cite[Lemma~3.1]{drton2010smoothness}).  Choosing an appropriate
  smooth path of diagonal matrices passing through the identity
  matrix, we obtain every possible tangent vector in
  $T_{(1,1, \ldots, 1)}\RR_{>0}^n$.
	
  If $\M_1(\C R)$ is a smooth submanifold of $\PD_n$, then also
  automatically of $\PD_{n,1}$, and the product
  $\RR_{>0}^n \times \M_1(\C R)$ inherits a canonical smooth structure
  making the second vertical inclusion an embedding of a smooth
  submanifold. Then clearly the induced smooth structure on $\M(\C R)$
  makes the leftmost vertical inclusion an embedding of a smooth
  submanifold as well.
\end{proof}

The proof shows that if $\M_1(\C R)$ is a smooth submanifold of $\PD_n$, then
it is in fact a smooth submanifold of $\M(\C R)$ via the
inclusion. The following lemma is easily verified.

\begin{lemma}[{\cite[Lemma~3.3]{drton2010smoothness}}] \label{l:SmoothnessInverse}
  There is a self-inverse diffeomorphism
  $\PD_{n,1} \rightarrow \PD_{n,1}$, given by matrix inversion
  followed by forming the correlation matrix, mapping $\M_1(\C R)$
  onto $\M_1(\C R^\dual)$.
\end{lemma}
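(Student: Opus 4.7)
The plan is to explicitly write down the map, verify self-inversion by direct computation, and then use \cref{lem:dualInverse} together with the scale-invariance of the CI structure to show the stated restriction.

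First I would define the map $\phi\colon \PD_{n,1} \to \PD_{n,1}$ by $\phi(\Sigma) \defas D_\Sigma^{-1/2}\, \Sigma^{-1}\, D_\Sigma^{-1/2}$, where $D_\Sigma = \mathrm{diag}(\Sigma^{-1})$. The matrix $\Sigma^{-1}$ lies in $\PD_n$, hence $D_\Sigma$ has strictly positive diagonal, so $D_\Sigma^{-1/2}$ is well-defined and depends smoothly on $\Sigma$. Matrix inversion is smooth on $\PD_n$, so $\phi$ is smooth, and by construction $\phi(\Sigma)$ has ones on the diagonal and is positive definite.

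Next I would verify that $\phi \circ \phi = \id$. Let $\Sigma \in \PD_{n,1}$, write $K = \Sigma^{-1}$ with diagonal $D = D_\Sigma$, so $\phi(\Sigma) = D^{-1/2} K D^{-1/2}$. Then $\phi(\Sigma)^{-1} = D^{1/2} K^{-1} D^{1/2} = D^{1/2}\Sigma D^{1/2}$. Since $\Sigma$ has ones on the diagonal, the diagonal of $\phi(\Sigma)^{-1}$ is precisely $D$, i.e., $D_{\phi(\Sigma)} = D$. Therefore
\[
\phi(\phi(\Sigma)) = D^{-1/2}\,(D^{1/2}\Sigma D^{1/2})\,D^{-1/2} = \Sigma,
\]
which shows $\phi$ is a self-inverse smooth bijection, hence a diffeomorphism.

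Finally I would verify that $\phi$ sends $\M_1(\C R)$ to $\M_1(\C R^\dual)$. The key observation is that conjugation by a positive diagonal matrix preserves the CI structure: for any positive diagonal $D'$ and any $\Sigma' \in \PD_n$, a submatrix $(D'\Sigma'D')_{iK,jK}$ equals $D'_{iK,iK}\,\Sigma'_{iK,jK}\,D'_{jK,jK}$, so its determinant vanishes iff $\det \Sigma'_{iK,jK}$ does. In particular $\CIS{D^{-1/2}\Sigma^{-1}D^{-1/2}} = \CIS{\Sigma^{-1}}$, and by \cref{lem:dualInverse} the latter equals $\CIS{\Sigma}^\dual$. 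Consequently $\C R \subseteq \CIS{\Sigma}$ iff $\C R^\dual \subseteq \CIS{\phi(\Sigma)}$, so $\phi(\M_1(\C R)) \subseteq \M_1(\C R^\dual)$; applying the same argument to $\C R^\dual$ and using $(\C R^\dual)^\dual = \C R$ together with $\phi \circ \phi = \id$ gives the reverse inclusion.

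There is no real obstacle here: the only subtlety is remembering that the inverse of an element of $\PD_{n,1}$ no longer has unit diagonal, which is exactly why one must re-normalize, and that this re-normalization is harmless both for smoothness and for the CI structure.
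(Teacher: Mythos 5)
Your proof is correct and is exactly the ``easily verified'' argument the paper has in mind: it cites \cite[Lemma~3.3]{drton2010smoothness} and omits the details, which you supply correctly (the explicit renormalized inversion map, the computation $D_{\phi(\Sigma)}=D_\Sigma$ giving self-inversion, and invariance of the CI structure under positive diagonal conjugation combined with \cref{lem:dualInverse}). No gaps.
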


In particular, $\M(\C R)$ and $\M_1(\C R)$ are smooth if and only if
$\M(\C R^\dual)$ and $\M_1(\C R^\dual)$ are.  As an image of a linear
space under the inversion diffeomorphism any graphical model $\M(G)$
is smooth, which might not be obvious from the defining
$(n-1) \times (n-1)$ almost-principal minors.
The (semi-)algebraic geometry of $\M(\C R)$ and $\M_1(\C R)$ can be
quite different in the non-smooth case. E.g., the number of
irreducible components of their Zariski closures need not always
agree. However, the bijective morphism of semi-algebraic sets
$\RR^n_{>0} \times \M_1(\mathcal{R}) \rightarrow \M(\mathcal{R}), (D,
\Sigma) \mapsto D \Sigma D$ can be used to show that their dimensions
always differ by $n$.

\subsection{Basics from real algebraic geometry}
We collect several foundational definitions and results from
\cite{bochnak2013real} and refer to this textbook for an
extensive treatment.

A \emph{real algebraic set} $Z \subseteq \RR^n$ is the
vanishing set $\mathcal{V}(S)$ of a collection
$S \subseteq \RR[x_1, \ldots, x_n]$ of polynomials, and $S$ may
be replaced by the ideal $\ideal{S}$ it generates.  Real algebraic
sets are the closed sets of the \emph{Zariski topology}
on~$\RR^n$.  If $\Theta \subseteq \RR^n$ is any subset,
its ideal is
$\C I(\Theta) = \set{f\in \RR[x_{1},\dotsc,x_{n}] : f(x) = 0,
  \text{ for all } x\in \Theta}$.  The real algebraic set of
$\C I(\Theta)$ is the \emph{Zariski closure $\ol \Theta$ of $\Theta$}.
Every irreducible component of the Zariski closure of $\Theta$ in
$\RR^n$ intersects~$\Theta$.  If~$Z$ is irreducible in this
topology, $Z$ is a \emph{real algebraic variety}.  A set of the form
\begin{equation*}
  \Theta = \set{x \in \RR^n: f_1(x) = \cdots = f_r(x) = 0, g_1(x) > 0, \ldots, g_s(x)> 0},
\end{equation*}
where $f_i, g_j \in \RR[x_1, \ldots, x_n]$ are real polynomials
is a \emph{basic semi-algebraic set}.  A finite union of basic
semi-algebraic sets in a fixed $\RR^n$ is a
\emph{semi-algebraic set}.
The \emph{dimension} of a semi-algebraic set $\Theta \subseteq \RR^n$
is the Krull dimension of its coordinate ring
$\RR[x_1, \ldots, x_n] / \mathcal{I}(\Theta)$.  The dimension of
$\Theta$ then equals the dimension of its Zariski closure as
$\mathcal{I}(\Theta) = \mathcal{I}(\overline{\Theta})$.  A
semi-algebraic set $\Theta \subseteq \RR^n$ is
\emph{semi-algebraically connected} if for every two \emph{disjoint}
semi-algebraic subsets $A, B \subseteq \Theta$ which are closed in
$\Theta$ and satisfy $A \cup B = \Theta$, one has $A = \Theta$ or
$B = \Theta$.  According to \cite[Theorem~2.4.5]{bochnak2013real}, a
semi-algebraic set $\Theta \subseteq \RR^n$ is semi-algebraically
connected if and only if it is connected with respect to the Euclidean
topology on~$\RR^n$.

\begin{definition}\label{def:smooth}
  For a real algebraic set $V \subseteq \RR^n$ with vanishing ideal
  $\mathcal{I}(V) = \ideal{f_1, \ldots, f_r}$, the \emph{Zariski
    tangent space $T_p V$ at $p \in V$} is the kernel of the Jacobian
  matrix
  \begin{equation*}
    J_p = \left( \frac{\partial f_i}{\partial x_j}(p) \right)_{
      \substack{i = 1, \ldots, r, \\ j = 1, \ldots, n}
    }.
  \end{equation*}
  The algebraic set $V$ is \emph{smooth at $p \in V$} if it is
  irreducible and $\dim(T_p V) = \dim(V)$ or, equivalently,
  $\rk(J_p) = n - \dim(V)$.  Finally, $V$ is \emph{smooth}
  if it is smooth at every point $p \in V$.
\end{definition}

Assume $V$ is an irreducible real algebraic set.  Krull's principal
ideal theorem implies that the rank of the Jacobian is at most
$n - \dim(V)$ or equivalently $\dim(T_p V) \geq \dim(V)$.  Moreover,
the rank does not depend on the set of generators of~$\mathcal{I}(V)$.
This follows from the fact that adding an element of the ideal
$\mathcal{I}(V)$ to a list of generators does not change the rank as
the gradient of the additional polynomial is linearly dependent on the
gradients of the generators at every point $p \in V$. This reasoning
also proves the following:
\begin{lemma}\label{lemma:rkInequality}
  Let $V \subseteq \RR^n$ be a real algebraic set with generators
  $f_1, \ldots, f_r$ of $\mathcal{I}(V)$ and let
  $g_1, \ldots g_s \in \mathcal{I}(V)$ be arbitrary.
  Then for all $p \in V$ we have
  \begin{equation*}
    \rk\left( \frac{\partial f_i}{\partial x_j}(p) \right)_{\substack{i = 1, \ldots, r, \\ j = 1, \ldots, n}}
    \geq \rk\left( \frac{\partial g_i}{\partial x_j}(p) \right)_{\substack{i = 1, \ldots, s, \\ j = 1, \ldots, n}}.
  \end{equation*}
\end{lemma}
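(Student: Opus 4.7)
The plan is to reduce the statement to the observation that every gradient $\nabla g_j(p)$ lies in the row span of the Jacobian of the generators $f_i$ at~$p$, so that the Jacobian matrix of the $g_j$ factors through the Jacobian of the $f_i$ and hence has no larger rank.

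Concretely, since $g_1, \ldots, g_s \in \mathcal{I}(V) = (f_1, \ldots, f_r)$, I would begin by writing each $g_j = \sum_{i=1}^r h_{ji} f_i$ for some $h_{ji} \in \RR[x_1, \ldots, x_n]$. Differentiating with the product rule gives
\[
  \frac{\partial g_j}{\partial x_k} = \sum_{i=1}^r \frac{\partial h_{ji}}{\partial x_k} f_i + \sum_{i=1}^r h_{ji} \frac{\partial f_i}{\partial x_k}.
\]
Evaluating at a point $p \in V$ kills the first sum because $f_i(p) = 0$ for all~$i$, so
\[
  \frac{\partial g_j}{\partial x_k}(p) = \sum_{i=1}^r h_{ji}(p) \frac{\partial f_i}{\partial x_k}(p).
\]

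In matrix form, letting $J_f(p)$ and $J_g(p)$ denote the Jacobians of the $f_i$ and the $g_j$ at $p$, respectively, and $H(p) = (h_{ji}(p))_{j,i}$, this identity reads $J_g(p) = H(p) \cdot J_f(p)$. Consequently
\[
  \rk J_g(p) \le \rk J_f(p),
\]
which is exactly the claimed inequality. The argument involves no subtle step; the only thing to verify is that membership in $\mathcal{I}(V)$ is enough (as opposed to membership in the radical or some geometric ideal), but by hypothesis $g_1, \ldots, g_s$ lie in the ideal generated by $f_1, \ldots, f_r$, so the representation $g_j = \sum_i h_{ji} f_i$ is immediately available, and no genuine obstacle arises.
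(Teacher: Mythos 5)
Your proof is correct and matches the paper's argument: the paper justifies this lemma by exactly the observation that the gradient of any element of $\mathcal{I}(V)$ at a point $p \in V$ is a linear combination of the gradients of the generators, which is your product-rule computation. Nothing to add.
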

The natural application of \cref{lemma:rkInequality} is to bound the
$\dim(V)$ when $\mathcal I(V)$ is not known.

A semi-algebraic set $\Theta \subseteq \RR^n$ is \emph{smooth at
  $p \in \Theta$} if $p$ is contained in a unique irreducible
component $Z$ of the Zariski closure of $\Theta$ and $p$ is a smooth
point of~$Z$. If $\Theta$ is smooth at every point, then $\Theta$ is
\emph{smooth}.  The set of smooth points of $\Theta$ is its
\emph{smooth locus}, denoted~$\Theta_{\rm sm}$.  The smooth locus of a
non-empty real algebraic set $V$ is a non-empty Zariski-open subset
of~$V$ by~\cite[Proposition~3.3.14]{bochnak2013real}.
If $V$ is irreducible, $V_{\rm sm}$ is a Zariski-dense open subset.
The smooth locus $V_{\rm sm}$ of a real algebraic variety $V$ is a
smooth submanifold of $\RR^n$ by \cite[Proposition~3.3.11]{bochnak2013real}.
From this it follows that, if $\Theta \subseteq \mathbb{R}^n$ is a
basic semi-algebraic set such that its Zariski closure is irreducible,
then $\Theta_{\rm sm}$ is a smooth submanifold of $\RR^n$ by viewing
$\Theta_{\rm sm}$ as an open subset (in~the Euclidean topology)
of~$\overline{\Theta_{{\rm sm}}}$.

\subsection{Smoothness of the models \texorpdfstring{$\M(G,H)$}{M(G,H)}}
\label{sec:smooth}

The geometry of double Markovian models is best understood in terms of
semi-algebraic sets.  We can identify
\begin{equation*}
  \M(G,H) = \M(G) \cap \M(H)^{-1}
\end{equation*}
with
\begin{equation*}
	\widetilde{\M(G,H)} \coloneqq \set{(\Sigma,\Sigma^{-1}): \Sigma_{ij} = 0 \text{ for all } ij \notin H \text{ and } (\Sigma^{-1})_{kl} = 0 \text{ for all } kl \notin G},
\end{equation*}
and the latter is a smooth submanifold of $\PD_n \times \PD_n$ if and only if $\M(G,H)$ is a smooth submanifold of $\PD_n$, as follows from the diagram:
\begin{equation*}
\begin{tikzcd}
\PD_n \ar[r, hook, "\id \times \inv"]
& \PD_n \times \PD_n \\
\M(G,H) \ar[u, hook] \ar[r, "\cong"]
& \widetilde{\M(G,H)} \ar[u, hook].
\end{tikzcd}
\end{equation*}
Because of this, it suffices to study the smoothness of~$\M(G,H)$.

Both $\mathcal{M}_G$ and $\mathcal{M}_H^{-1}$ are smooth submanifolds
of~$\PD_n$.  It is sufficient for the intersection to be smooth that
the intersection be transverse at every point
\cite[Chapter~1~\S5]{guillemin2010differential}, meaning that the
dimensions of tangent spaces add up to that of the ambient manifold.
This criterion yields \cref{theorem:Piotr}, once we have computed the
tangent spaces.

\begin{proposition}\label{p:tangentSpaces}
  A basis of the tangent space~$T_P \M(G)$ is given by the matrices
  \[M^{ij} \coloneqq P^i \cdot P_j + P^j \cdot P_i, \qquad \text {for
      $i=j$ or $ij \in E_G$},\]
  where $P^i$ is the $i$-th column of $P$ and $P_j$ is the $j$-th row
  of~$P$.
  A basis of the tangent space $T_P \M(H)^{-1}$ consists of
  $E^{ij} \coloneqq E_{ij} + E_{ji}$ for $i=j$ or $ij \in E_H$, where
  $E_{ij}$ is the $n \times n$ matrix having a $1$ at the $(i,j)$-th
  position and zeros everywhere else.
\end{proposition}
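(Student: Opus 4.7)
The plan is to deduce both tangent-space descriptions by identifying each model locally with an open subset of a linear subspace of $\Sym^2(\RR^n)$, using matrix inversion as a smooth identification in the case of $\M(G)$.

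First I will handle $T_P \M(H)^{-1}$, which is the easier case. By definition, $\M(H)^{-1}$ consists of positive definite matrices conforming to $H$, i.e., it is the intersection of $\PD_n$ with the linear subspace $L_H \subseteq \Sym^2(\RR^n)$ of symmetric matrices having zeros at every position $ij \notin E_H$. Since $\PD_n$ is open in $\Sym^2(\RR^n)$, this intersection is an open subset of $L_H$, so its tangent space at every point equals $L_H$ itself. A basis of $L_H$ is given by the symmetric elementary matrices $E^{ij} = E_{ij} + E_{ji}$ for $i=j$ or $ij \in E_H$, which is exactly the claimed basis.

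For $T_P \M(G)$ the same reasoning applied to $\M(G)^{-1}$ yields $T_{P^{-1}}\M(G)^{-1} = L_G$, spanned by $E^{ij}$ for $i=j$ or $ij \in E_G$. The inversion map $\inv \colon \PD_n \to \PD_n$ is a smooth diffeomorphism whose derivative at $Q$ is the linear map $\dot Q \mapsto -Q^{-1}\dot Q\, Q^{-1}$, as obtained by differentiating $Q \cdot Q^{-1} = \mathbbm{1}_n$. Since $\M(G) = \inv(\M(G)^{-1})$, applying this derivative at $Q = P^{-1}$ gives
\[
  T_P \M(G) \;=\; \set{ -P \dot K\, P : \dot K \in L_G },
\]
which is spanned by the matrices $-P E^{ij} P$ for $i=j$ or $ij \in E_G$. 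A direct entrywise computation shows $(P E_{ij} P)_{kl} = P_{ki} P_{jl}$, so $P E_{ij} P$ is the rank-one outer product $P^i \cdot P_j$ of the $i$-th column with the $j$-th row of $P$. Symmetrizing and negating yields $-P E^{ij} P = -M^{ij}$, and scaling by $-1$ produces the advertised basis; linear independence transfers from the $E^{ij}$ through the isomorphism $\dot K \mapsto -P \dot K P$.

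The argument is essentially routine once the models are recognized as linear after (possibly) inversion; the main conceptual point, rather than an obstacle, is that the defining almost-principal minors of $\M(G)$ need not be confronted directly, because the diffeomorphism $\inv$ transports the trivial tangent description of the linear model $\M(G)^{-1}$ to the required description for $\M(G)$. The dimension count $n + |E_G|$ (respectively $n + |E_H|$) of the produced basis then agrees with $\dim \M(G)$ (respectively $\dim \M(H)^{-1}$) as expected.
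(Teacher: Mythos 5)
Your proposal is correct and follows essentially the same route as the paper: identify $\M(H)^{-1}$ (respectively $\M(G)^{-1}$) as an open subset of a coordinate subspace of $\Sym^2(\RR^n)$, then transport the basis $E^{ij}$ through the differential $\dot K \mapsto -P\dot K P$ of matrix inversion, whose image of $E^{ij}$ is $-M^{ij}$. The entrywise verification $(PE_{ij}P)_{kl}=P_{ki}P_{jl}$ and the transfer of linear independence through the isomorphism match the paper's argument.
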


\begin{proof}
  We view both tangent spaces as subspaces of
  $T_P \PD_n = T_P \Sym^2(\RR^n) \cong \Sym^2(\RR^n)$.  As
  $\M(H)^{-1}$ is just the intersection of $\PD_n$ with the linear
  subspace of $\Sym^2(\RR^n)$ given by the vanishing of the
  $ij$-entries for all non-edges $ij$ of~$H$, the second claim
  follows.  For $T_P \M(G)$, we use the differential of the matrix
  inversion $\inv\colon \PD_n \rightarrow \PD_n$.  In coordinates
  $X = (x_{ij})$ it~is
  \begin{equation*}
    \frac{\partial \inv}{\partial x_{ij}}(X) =
    \frac{\partial(X^{-1})}{\partial x_{ij}} = - X^{-1} E^{ij} X^{-1} = - (X^{-1})^i \cdot (X^{-1})_j.
  \end{equation*}
  Now $T_{P^{-1}} \M(H)^{-1}$ is generated by $E^{ij}$ for $i=j$ or
  $ij \in E_H$, and the differential at $P^{-1}$ maps
  $T_{P^{-1}} \M(H)^{-1}$ isomorphically onto $T_P \M(G)$, and the
  image of $E^{ij}$ is~$-M^{ij}$.
\end{proof}

\begin{remark}\label{r:dim}
  The proof also shows $\dim (\M(G)) = |E_G| + n$. Moreover,
  $G \cup H = K_N$ if and only if
  $T_{\mathbbm{1}_n} \M(G) + T_{\mathbbm{1}_n} \M(H)^{-1} =
  T_{\mathbbm{1}_n} PD_n$, that is, if and only if the intersection is
  transverse at $P = \mathbbm{1}_n$. In particular, under this
  assumption $\M(G,H) = \M(G) \cap \M(H)^{-1}$ is smooth
  at~$\mathbbm{1}_n$ and thus at all positive diagonal matrices by
  means of the map $P \mapsto D P D$ for a fixed positive diagonal
  matrix~$D$.  This is a diffeomorphism of $\PD_n$, restricting to a
  homeomorphism $\M(G,H) \rightarrow \M(G,H)$, mapping smooth points
  to smooth points and $\mathbbm{1}_n$ to~$D^2$.
\end{remark}

\begin{remark}
  For $H = K_N$, the proof of \cref{p:tangentSpaces} implies that the
  matrix $M \coloneqq (M^{ij}_{kl})_{kl, ij \in \binom{N+1}{2}}$ is
  symmetric and invertible whenever $P$ is symmetric and invertible --
  a fact that might be difficult to prove directly. If $P$ is positive
  definite, $M$ is the inverse information matrix appearing, for
  example, in the Cram\'er--Rao inequality and
  in~\cite[Example~2.2]{drton2009likelihood}.
\end{remark}

The following theorem was suggested to us by Piotr Zwiernik.  It was
the starting point of our investigations and here we present a
differential-geometric proof.

\begin{theorem}\label{theorem:Piotr}
  If $G \cup H = K_N$ is the complete graph, then the model
  $\M(G,H)$ is smooth. In fact, the intersection of $\M(G)$ and
  $\M(H)^{-1}$ is transverse at every intersection point.
\end{theorem}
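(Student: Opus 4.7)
The plan is to verify transversality of $\M(G)$ and $\M(H)^{-1}$ at every $P \in \M(G,H)$; the theorem then follows because a transverse intersection of smooth submanifolds is itself a smooth submanifold. By \cref{p:tangentSpaces}, the tangent spaces satisfy $T_P \M(G) = P V_G P$ and $T_P \M(H)^{-1} = V_H$, where $V_G \subseteq \Sym^2(\RR^n)$ denotes the subspace of symmetric matrices whose off-diagonal support is contained in $E_G$, and analogously for $V_H$. Transversality at $P$ is thus the identity $P V_G P + V_H = \Sym^2(\RR^n)$.

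To attack this equation, I would dualize with respect to the Frobenius inner product $\langle A,B\rangle := \tr(AB)$. A routine calculation identifies $V_H^\perp$ with the subspace $U_H$ of symmetric matrices that vanish on the diagonal and on $E_H$, and analogously $V_G^\perp = U_G$; cyclicity of the trace then yields $(P V_G P)^\perp = K U_G K$ for $K := P^{-1}$. Transversality at $P$ is thereby recast as
\[ U_H \cap K U_G K = \{0\}. \]

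The crux is a two-way computation of $\tr(ZY)$ for an element $Y \in U_G$ with $Z := KYK \in U_H$. First, expanding entrywise, $\tr(ZY) = \sum_{ij} Z_{ij} Y_{ij}$ vanishes because the off-diagonal supports of $Z$ (non-edges of $H$) and of $Y$ (non-edges of $G$) are disjoint --- this is precisely where the hypothesis $G \cup H = K_N$ enters. Second, splitting $K$ via its positive definite square root and using the cyclicity of the trace,
\[ \tr(ZY) = \tr(KYKY) = \tr\!\bigl((K^{1/2} Y K^{1/2})^2\bigr) = \norm{K^{1/2} Y K^{1/2}}_F^2 \geq 0, \]
with equality forcing $K^{1/2} Y K^{1/2} = 0$, hence $Y = 0$ and so $Z = 0$.

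The main obstacle is spotting the right reformulation: first translating the additive transversality condition into the intersection statement $U_H \cap K U_G K = \{0\}$, and then recognizing that $\tr(KYKY)$ is a squared Frobenius norm after symmetrically splitting $K = K^{1/2} K^{1/2}$. Once those two observations are in place, the disjointness of non-edges forces $\tr(ZY) = 0$ in a single line, and the positivity argument closes the proof.
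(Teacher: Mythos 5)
Your proof is correct, and it follows the same overall strategy as the paper: establish transversality of $\M(G)$ and $\M(H)^{-1}$ at every common point using the tangent-space description from \cref{p:tangentSpaces}. The difference lies in how the identity $T_P\M(G) + T_P\M(H)^{-1} = \Sym^2(\RR^n)$ is verified. The paper argues primally: it observes that the Gram-type matrix $M = (M^{ij}_{kl})$ is the inverse Fisher information matrix, hence positive definite, so its principal submatrix indexed by $E_G \setminus E_H$ is invertible, which forces the spans to add up to everything. You argue dually: you pass to orthogonal complements under the trace form, reduce transversality to $U_H \cap K U_G K = \{0\}$, and kill the intersection by computing $\tr(ZY)$ twice --- once as zero from the disjointness of the non-edge supports (this is where $G \cup H = K_N$ enters, exactly as in the paper), and once as $\norm{K^{1/2} Y K^{1/2}}_F^2 \geq 0$. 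Both proofs ultimately rest on the positive definiteness of the congruence form $Y \mapsto \tr(PYPY)$ restricted to a coordinate subspace; your version makes this self-contained via the symmetric square root instead of citing the information matrix and the principal-submatrix fact, at the cost of the extra dualization step. One small presentational point: it is worth stating explicitly that $K^{1/2} Y K^{1/2}$ is symmetric, so that $\tr\bigl((K^{1/2} Y K^{1/2})^2\bigr)$ really is the squared Frobenius norm rather than a possibly negative quantity.
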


\begin{proof}
  The inverse information matrix $M$ at a positive definite matrix
  $P \in \PD_{n}$ is positive definite. The matrix
  $\left( M^{ij}_{kl} \right)_{kl,ij \in E_G \setminus E_H}$ is its
  principal $(E_G \setminus E_H \times E_G \setminus E_H)$-submatrix
  and therefore also positive definite and in particular
  invertible. This implies $T_P \M(G) + T_P \M(H)^{-1} = T_P \PD_n$,
  so the intersection of the smooth submanifolds $\M(G)$ and
  $\M(H)^{-1}$ of $\PD_n$ is transverse at every common
  point~$P$. Therefore, the intersection
  $\M(G,H) = \M(G) \cap \M(H)^{-1}$ is a smooth submanifold of $\PD_n$
  by~\cite[Chapter~1~\S5]{guillemin2010differential}.
\end{proof}

\begin{remark}\label{r:PiotrArgument}
  Multivariate centered Gaussian random vectors form a regular
  exponential family with mean parameter $\Sigma$ and natural
  parameter~$K=\Sigma^{-1}$ \cite[Chapter 3]{sundberg2019}.  According
  to \cite[Corollary 3.17]{sundberg2019}, a mixed parametrization
  $(\sigma_{ij},k_{st})_{ij\in A, st\in B}$ with $A\cup B = E_{K_{N}}$
  and $A\cap B = \emptyset$ is a valid parametrization for the
  exponential family.  In the situation of \cref{theorem:Piotr}, when
  $G \cup H = K_N$, the non-edges of $G$ and $H$ are disjoint.
  Therefore one could pick $A, B \subseteq E_{K_{N}}$ such that the
  non-edges of $G$ are contained in $B$ and the non-edges of $H$
  in~$A$.  However, when zero constraints on both mean and natural
  parameters are imposed, the result is in general not a regular
  exponential family.  Therefore smoothness results like
  \cref{theorem:Piotr} do not simply follow from general theory.
\end{remark}

The proof shows that in \cref{theorem:Piotr},
$\codim(\M(G,H)) = \codim(\M(G)) + \codim(\M(H))$ and so, regarding
dimensions, by \cref{r:dim}
\begin{align*}
  \dim(\M(G,H)) & = \frac{n^{2}+n}{2} - \left(\frac{n^{2}-n}{2} - |E_{G}| + \frac{n^{2}-n}{2} - |E_{H}| \right) \\
                & = |E_G| + |E_H| - \frac{n^{2}-3n}{2} \\
                & = |E_{G} \cap E_{H}| + n,
\end{align*}
where we have used
$|E_{G}| + |E_{H}| = \binom{n}{2} + |E_{G}\cap E_{H}|$.  The
intersection with $\PD_{n,1}$ satisfies
\begin{equation*}
  \dim(\M_1(G,H)) = \dim(\M(G,H) \cap \PD_{n,1})  = \dim(\M(G,H)) - n = |E_G \cap E_H|.
\end{equation*}

If one is not in the favorable situation $G \cup H = K_{N}$, the
dimension computation becomes more involved.  Let $G$ and $H$ now be
arbitrary graphs on~$N$, and denote by $E_G^\comp$ the edge complement
$\binom{N}{2} \setminus E_G$ and similarly for~$E_H$.  The following
lemma is a technical core for dimension bounds.

\begin{lemma}\label{l:pseudo_jacobian}
  With $\Sigma=(\sigma_{st})_{st \in \binom{N+1}{2}}$, let
  $f_{ij} = \sigma_{ij}$ for $ij \in E_H^\comp$ and
  $g_{kl} = \det(\Sigma_{N \setminus k, N \setminus l})$ for
  $kl \in E_G^\comp$.  Consider the differentials
  \[
    J_{H} = \left(\frac{\partial f_{ij}}{\partial
        \sigma_{st}}\right)_{ij \in E^{\comp}_{H} ,
      st\in\binom{N+1}{2}}
    \quad \text{ and } \quad
    J_{G} = \left(
      \frac{\partial g_{kl}}{\partial{\sigma_{st}}}
      \right)_{kl \in E^{\comp}_{G} ,
      st\in\binom{N+1}{2}}.
  \]
  For every $\Sigma \in \M(G,H)$, define
  $(|E_G^\comp| + |E_H^\comp|) \times \binom{N+1}{2}$ matrix
  $\widetilde{J}_\Sigma$ by stacking $J_{G}$ and $J_{H}$:
  \begin{equation*}
    \widetilde{J}_\Sigma \coloneqq
    \begin{pmatrix}
      J_{G}\\
      J_{H}
    \end{pmatrix}.
  \end{equation*}
  Then $\rk \widetilde{J}_\Sigma \geq \binom{n}{2} - |E_G \cap E_H|$.
\end{lemma}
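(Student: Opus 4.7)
The plan is to bound the kernel dimension $\dim \ker \widetilde{J}_\Sigma$ from above by $n + |E_G \cap E_H|$; rank-nullity then yields the desired lower bound on $\rk \widetilde{J}_\Sigma$. The first step is to identify this kernel with an intersection of tangent spaces. Because $\M(H)^{-1}$ is a linear slice of $\PD_n$ cut out by the $f_{ij}$, its tangent at any point is $L_H \defas \set{T \in \Sym^2(\RR^n) : T_{ij} = 0 \text{ for all } ij \in E_H^\comp}$. Since $\M(G)$ is smooth of codimension $|E_G^\comp|$ by \cref{p:MarkovianCI}, the $|E_G^\comp|$ defining minors $g_{kl}$ already have a full-rank Jacobian at every $\Sigma \in \M(G)$, whose kernel therefore coincides with the manifold tangent space $T_\Sigma \M(G)$. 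Hence $\ker \widetilde{J}_\Sigma = T_\Sigma \M(G) \cap L_H$.

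Using the dimension formula for sum and intersection, together with $\dim T_\Sigma \M(G) = |E_G| + n$ and $\dim L_H = |E_H| + n$, the desired bound becomes $\dim(T_\Sigma \M(G) + L_H) \geq |E_G \cup E_H| + n$. We plan to prove this by exhibiting that many linearly independent vectors in the sum: the full basis $\set{E^{ij} : i = j \text{ or } ij \in E_H}$ of $L_H$, together with the vectors $\set{M^{ij} : ij \in E_G \setminus E_H}$ from the basis of $T_\Sigma \M(G)$ provided by \cref{p:tangentSpaces}. These number $|E_H| + n + |E_G \setminus E_H| = |E_G \cup E_H| + n$, matching the bound.

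Passing to the quotient $\Sym^2(\RR^n)/L_H$, the independence of this collection reduces to showing that the classes of $M^{ij}$ for $ij \in T \defas E_G \cap E_H^\comp$ are linearly independent, which amounts to invertibility of the symmetric matrix $A$ with entries $A_{mn, ij} = M^{ij}_{mn} = \sigma_{mi}\sigma_{nj} + \sigma_{mj}\sigma_{ni}$ for $mn, ij \in T$. This is the main obstacle and the only step that is not purely formal. The plan is to recognise, via the identity $\tr(\Sigma E^{ij}\Sigma E^{mn}) = 2 A_{mn,ij}$, that $A$ coincides with the principal $T \times T$ block of the operator $U \mapsto \Sigma U \Sigma$ on $\Sym^2(\RR^n)$ in the Frobenius-orthonormal basis consisting of $E_{ii}$ for $i \in N$ and $E^{ij}/\sqrt{2}$ for $i < j$. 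This operator is self-adjoint and positive definite, since $\langle U, \Sigma U \Sigma \rangle_F = \norm{\Sigma^{1/2} U \Sigma^{1/2}}_F^2$ vanishes only at $U = 0$; hence every principal submatrix is positive definite, so $A$ is invertible and chaining the reductions yields the required dimension bound.
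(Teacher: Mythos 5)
Your proof follows essentially the same route as the paper's: identify $\ker\widetilde{J}_\Sigma$ with $T_\Sigma\M(G)\cap T_\Sigma\M(H)^{-1}$, apply the dimension formula for sums and intersections, and obtain the lower bound on the sum from the invertibility of the principal $(E_G\setminus E_H)\times(E_G\setminus E_H)$ block of the inverse information matrix $(M^{ij}_{mn})$ --- whose positive definiteness you additionally prove from scratch via the operator $U\mapsto\Sigma U\Sigma$, where the paper merely asserts it. The one loose step is your claim that smoothness of $\M(G)$ of codimension $|E_G^\comp|$ by itself forces the $|E_G^\comp|$ minors $g_{kl}$ to have full-rank Jacobian (this inference is invalid in general, cf.\ $x^2=0$); the full rank, and hence $\ker J_G=T_\Sigma\M(G)$, instead follows from $g_{kl}=\pm\det(\Sigma)\,(\Sigma^{-1})_{kl}$ together with the inversion-diffeomorphism computation already carried out in \cref{p:tangentSpaces}, so the gap is easily closed.
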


\begin{proof}
  The kernel of $\widetilde{J}_\Sigma$ is the intersection of the
  kernels of $J_{G}$ and $J_{H}$.  The Zariski closures
  $\overline{\M(G)}$ and $\overline{\M(H)^{-1}}$ in
  $\Sym^2(\RR^n) \cap \GL(\RR^n)$ of $\M(G)$ and $\M(H)^{-1}$ are both
  irreducible smooth varieties, $\M(G)$ by \cref{p:MarkovianCI} and
  $\M(H)^{-1}$ because it is a linear space.
  The Zariski tangent spaces have been computed in
  \cref{p:tangentSpaces}.
  Using $\dim(U \cap V) = \dim(U) + \dim(V) - \dim(U + V)$ for
  finite-dimensional vector spaces $U$ and $V$ inside a common vector
  space, we  compute
  \begin{align*}
    \dim(\ker(\widetilde{J}_\Sigma)) &= \dim(\Span (E^{ij}: ij \in E_H \text{ or } i=j ) \cap \Span ( M^{kl}: kl \in E_G \text{ or } k=l )) \\ &= (|E_G|+n) + (|E_H|+n) \\ &- \dim(\Span (E^{ij}: ij \in E_H \text{ or } i=j) + \Span (M^{kl}: kl \in E_G \text{ or } k=l )) \\ &\leq |E_G \cap E_H| + n.
  \end{align*}
    In the last step we used that the dimension of the sum of the two vector
    spaces is at least $|E_H \cup (E_G \setminus E_H)| + n = |E_G \cup E_H|
    + n$ because the matrix $(M^{kl}_{st})_{kl, st \in E_G \setminus E_H}$
    is a principal submatrix of the inverse of the information matrix and
    therefore invertible.
\end{proof}

\begin{theorem}\label{dim_model}
	We have $\dim(\M(G,H)) \leq |E_G \cap E_H| + n$.
\end{theorem}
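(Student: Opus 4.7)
The plan is to combine the lower bound on the Jacobian rank supplied by \Cref{l:pseudo_jacobian} with Krull's principal ideal theorem (as recalled just before \Cref{lemma:rkInequality}), in order to bound the dimension of every irreducible component of the Zariski closure $\overline{\M(G,H)} \subseteq \Sym^2(\RR^n) \cong \RR^{\binom{n+1}{2}}$.

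First, I would pick any $\Sigma \in \M(G,H)$ and let $V$ be an irreducible component of $\overline{\M(G,H)}$ containing $\Sigma$. The polynomials $f_{ij}$ and $g_{kl}$ from \Cref{l:pseudo_jacobian} vanish on all of $\M(G,H)$ and hence on its Zariski closure, so in particular they lie in $\mathcal{I}(V)$. Let $J_\Sigma$ denote the Jacobian at $\Sigma$ of some generating set of $\mathcal{I}(V)$. Then \Cref{lemma:rkInequality}, applied to $g_i \in \{f_{ij},g_{kl}\}$, yields $\rk(J_\Sigma) \geq \rk(\widetilde{J}_\Sigma)$, and \Cref{l:pseudo_jacobian} bounds the latter below by $\binom{n}{2} - |E_G \cap E_H|$. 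On the other hand, since $V$ is irreducible, Krull's principal ideal theorem provides the upper bound $\rk(J_\Sigma) \leq \binom{n+1}{2} - \dim V$.

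Putting these inequalities together gives
\begin{equation*}
\binom{n+1}{2} - \dim V \;\geq\; \binom{n}{2} - |E_G \cap E_H|,
\end{equation*}
and hence $\dim V \leq n + |E_G \cap E_H|$. Since $\dim \M(G,H) = \dim \overline{\M(G,H)}$ equals the maximum of $\dim V$ over all irreducible components, this establishes the theorem. The substantive work has already been done in \Cref{l:pseudo_jacobian}; the only thing one has to be careful about is that the lower Jacobian estimate is stated at points of $\M(G,H)$, but this is not an obstacle because every irreducible component of $\overline{\M(G,H)}$ meets $\M(G,H)$, so a valid choice of $\Sigma \in V \cap \M(G,H)$ always exists.
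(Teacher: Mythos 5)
Your proposal is correct and follows essentially the same route as the paper's proof: both apply \Cref{lemma:rkInequality} to the polynomials $f_{ij}, g_{kl}$ of \Cref{l:pseudo_jacobian} to transfer its rank bound to the Jacobian of a generating set of each irreducible component's ideal, and then invoke Krull's principal ideal theorem to convert that into the dimension bound. Your closing remark that every irreducible component of the Zariski closure meets $\M(G,H)$ (so a suitable $\Sigma$ exists) is exactly the point the paper relies on implicitly, and it is justified by the fact recalled in the real-algebraic-geometry preliminaries.
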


\begin{proof}
  The polynomials $f_{ij}$ and $g_{kl}$ from \cref{l:pseudo_jacobian}
  lie in the vanishing ideal
  $\mathcal{I}(\M(G,H)) \subseteq \RR[\sigma_{st}: st \in
  \binom{N+1}{2}]$, and hence in the prime ideal of every irreducible
  component $Z$ of the Zariski closure
  $\mathcal{V}(\mathcal{I}(\M(G,H)))$ inside the affine space
  $\Sym^2(\RR^n)$. Then the Jacobian matrix $J_\Sigma$ at
  $\Sigma \in Z$ of a generating set of $\mathcal{I}(Z)$ satisfies
  $\rk J_\Sigma \geq \rk \widetilde{J}_\Sigma$ by
  \cref{lemma:rkInequality}.  By \cref{l:pseudo_jacobian}, for
  $\Sigma \in \M(G,H)$,
  $\rk \widetilde{J}_\Sigma \geq \binom{n}{2} - |E_G \cap E_H|$.  Then
  Krull's principal ideal theorem implies that the Zariski tangent
  space satisfies $\dim(T_\Sigma Z) \geq \dim(Z)$, and so
  \begin{equation*}
    \dim(Z) \leq \dim(T_\Sigma Z) = \binom{n+1}{2} - \rk(J_\Sigma) \leq \binom{n+1}{2} - \rk(\widetilde{J}_\Sigma) \leq |E_G \cap E_H| + n. \qedhere
  \end{equation*}
\end{proof}

\begin{corollary}\label{corollary:dim_model}
  We have $\dim(\M_1(G,H)) \leq |E_G \cap E_H|$.
\end{corollary}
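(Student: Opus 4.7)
The plan is to reduce this to \cref{dim_model} via the scaling action of positive diagonal matrices, as already foreshadowed in the discussion following \cref{l:SmoothnessInverse}. The key observation is that $\M(G,H)$ is stable under the action $\Sigma \mapsto D\Sigma D$ for any positive diagonal matrix $D \in \RR_{>0}^n$: indeed, $(D\Sigma D)_{kl} = D_{kk}D_{ll}\sigma_{kl}$, which vanishes whenever $\sigma_{kl}$ does, and $(D\Sigma D)^{-1} = D^{-1}\Sigma^{-1}D^{-1}$, so the non-edges of $G$ specify the same zeros on the inverse.

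Concretely, I would consider the map
\[
  \Phi\colon \RR_{>0}^n \times \PD_{n,1} \longrightarrow \PD_n, \qquad (D,\Sigma) \longmapsto D\Sigma D,
\]
which is a semi-algebraic homeomorphism (its inverse sends $\Xi$ to the pair consisting of $\mathrm{diag}(\sqrt{\xi_{ii}})$ and the associated correlation matrix of $\Xi$, both of which are semi-algebraic in the entries of~$\Xi$). By the stability observation above, $\Phi$ restricts to a semi-algebraic homeomorphism
\[
  \RR_{>0}^n \times \M_1(G,H) \;\xrightarrow{\ \cong\ }\; \M(G,H).
\]

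Since the dimension of a semi-algebraic set is invariant under semi-algebraic homeomorphism and is additive on products (\cite[Chapter~2]{bochnak2013real}), this yields
\[
  \dim(\M(G,H)) = n + \dim(\M_1(G,H)).
\]
Combining this with \cref{dim_model} gives $\dim(\M_1(G,H)) \le |E_G \cap E_H|$, as desired. There is no real obstacle here; the only point to double-check is that $\Phi$ and its inverse are genuinely semi-algebraic (so that the dimension theory from \cite{bochnak2013real} applies), which is immediate from the explicit formulas.
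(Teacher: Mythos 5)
Your proof is correct and follows essentially the same route the paper intends: the paper does not spell out a proof of this corollary, but the discussion after \cref{l:SmoothnessInverse} explicitly points to the bijective semi-algebraic morphism $(D,\Sigma) \mapsto D\Sigma D$ as the reason that $\dim(\M(G,H))$ and $\dim(\M_1(G,H))$ differ by $n$, which combined with \cref{dim_model} gives the claim. Your write-up just makes the semi-algebraicity of the inverse map and the dimension-theoretic facts from \cite{bochnak2013real} explicit.
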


\begin{remark}
  The inequalities in \cref{corollary:dim_model} and \cref{dim_model}
  can be strict.  Example~\ref{ex:Nonsmooth} contains a model
  $\M_1(G,H)$ of dimension $1$ with $|E_G \cap E_H| = 2$. It is
  reducible with two irreducible components which intersect only in
  the identity matrix~$\mathbbm{1}_{4}$.
\end{remark}

Using the dimension bound we can show that $E_G \cap E_H = \emptyset$
if $\M_1(G,H)$ is zero-dimensional, i.e., a union of finitely many
points (including~$\mathbbm{1}_n$).  Indeed, if
$E_G \cap E_H \neq \emptyset$, then $\dim(\M_1(G,H)) \geq 1$ because
after some permutation of~$N$ one can assume that
$12 \in E_{G}\cap E_{H}$.  Then
\begin{equation*} \Sigma =
  \begin{pmatrix}
    1 & \sigma_{12} & 0 & \ldots & 0\\
    \sigma_{12} & 1 & 0 &\ldots & 0\\
    0 & 0 & 1 & \ldots & 0\\
    \vdots & \vdots & \vdots & \ddots & \vdots\\
    0 & 0 & 0 & \ldots & 1
  \end{pmatrix}
\end{equation*}
lies in $\M_{1}(G,H)$ for any $\sigma_{12}\in (-1,1)$.
\Cref{c:trivialModel} below strengthens this further.  In fact,
$E_G \cap E_H = \emptyset$ if and only if the model consists only of
$\mathbbm{1}_{n}$.

\begin{lemma}\label{lemma:bound_Zariski_tangent_space}
  Let $V \coloneqq \overline{\M(G,H)}$ and $\Sigma \in \M(G,H)$. Then
  $\dim(T_\Sigma V) \leq |E_G \cap E_H| + n$.
\end{lemma}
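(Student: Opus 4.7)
The plan is to reuse the technical content of \cref{l:pseudo_jacobian} but apply it to the Zariski tangent space at $\Sigma$ rather than to the dimension of the irreducible component passing through it. The key observation is that the polynomials $f_{ij} = \sigma_{ij}$ for $ij \in E_H^\comp$ and $g_{kl} = \det(\Sigma_{N\setminus k, N\setminus l})$ for $kl \in E_G^\comp$ all vanish on $\M(G,H)$, hence lie in
\[
  \mathcal{I}(V) = \mathcal{I}(\overline{\M(G,H)}) = \mathcal{I}(\M(G,H)).
\]
So even though they need not generate $\mathcal I(V)$, they are valid test polynomials for bounding the Jacobian rank from below.

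Concretely, I would fix any set of generators $h_1, \ldots, h_r$ of $\mathcal{I}(V)$ and let $J_\Sigma$ denote their Jacobian at $\Sigma$. The Zariski tangent space satisfies
\[
  \dim(T_\Sigma V) = \binom{n+1}{2} - \rk(J_\Sigma).
\]
By \cref{lemma:rkInequality}, adding the $f_{ij}$ and $g_{kl}$ to the generating set cannot decrease the Jacobian rank, hence $\rk(J_\Sigma) \geq \rk(\widetilde{J}_\Sigma)$, with $\widetilde{J}_\Sigma$ as in \cref{l:pseudo_jacobian}. Since $\Sigma \in \M(G,H)$, that lemma gives $\rk(\widetilde{J}_\Sigma) \geq \binom{n}{2} - |E_G \cap E_H|$.

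Putting these together,
\[
  \dim(T_\Sigma V) \;\leq\; \binom{n+1}{2} - \left( \binom{n}{2} - |E_G \cap E_H| \right) \;=\; n + |E_G \cap E_H|,
\]
which is the claimed bound. There is essentially no obstacle here beyond making sure to quote \cref{l:pseudo_jacobian} with $\Sigma \in \M(G,H)$ (not just in $V$), since the tangent-space identifications in \cref{p:tangentSpaces} used in its proof rely on $\Sigma$ being positive definite and Markovian for $G$ and $H^{-1}$-conforming; the argument is otherwise parallel to the proof of \cref{dim_model}, merely replacing the appeal to Krull's principal ideal theorem by the definition of the Zariski tangent space.
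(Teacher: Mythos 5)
Your proposal is correct and follows the paper's own proof essentially verbatim: both identify $T_\Sigma V$ as the kernel of the Jacobian of a generating set of $\mathcal I(V)$, then combine \cref{lemma:rkInequality} with the rank bound $\rk \widetilde J_\Sigma \geq \binom{n}{2} - |E_G\cap E_H|$ from \cref{l:pseudo_jacobian} to conclude. Nothing further is needed.
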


\begin{proof}
  The Zariski tangent space $T_\Sigma V$ is the kernel of
  the Jacobian matrix $J_\Sigma$ at $\Sigma \in \M(G,H)$ of a generating
  set of $\mathcal{I}(V)$.  By \cref{lemma:rkInequality} and
  \cref{l:pseudo_jacobian}, we have
  $\rk J_\Sigma \geq \rk \widetilde{J}_\Sigma \geq \binom{n}{2} - |E_G
  \cap E_H|$ which yields the claimed inequality.
\end{proof}

\begin{theorem}\label{theorem:dim_smooth}
  Every connected component of $\M(G,H)$ of dimension
  $|E_G \cap E_H| + n$ is smooth and has irreducible Zariski closure.
\end{theorem}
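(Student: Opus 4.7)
The plan is to combine the tangent-space bound of \Cref{lemma:bound_Zariski_tangent_space} with the connectedness of $C$ to trap $C$ inside the smooth locus of a single $d$-dimensional irreducible component of $V \defas \overline{\M(G,H)}$, where $d \defas |E_G \cap E_H| + n$. By \Cref{dim_model} every irreducible component of $V$ has dimension at most $d$, and \Cref{lemma:bound_Zariski_tangent_space} gives $\dim T_\Sigma V \leq d$ for every $\Sigma \in \M(G,H)$.

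The core step is a Euclidean dichotomy: partition $C = C_d \sqcup C_{<d}$ according to whether or not $\Sigma$ lies in some $d$-dimensional irreducible component of $V$. Openness of $C_{<d}$ is immediate, since the union $V_d$ of those components is Zariski closed in $V$ and $C_{<d} = C \cap (V \setminus V_d)$. For openness of $C_d$, if $\Sigma \in C_d$ then the local dimension $\dim_\Sigma V$ equals $d$, and combining Krull's inequality with the tangent bound yields $\dim_\Sigma V \leq \dim T_\Sigma V \leq d$, forcing equality. This makes the local ring of $V$ at $\Sigma$ regular, so $\Sigma$ lies in a unique irreducible component $Z_\Sigma$ of $V$ (necessarily of dimension $d$) which is smooth at $\Sigma$ in the sense of \Cref{def:smooth}. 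The smooth locus of $V$ is Zariski, hence Euclidean, open, so $C_d$ is open in $C$. Non-emptiness of $C_d$ follows from $\dim \overline{C} = d$: some irreducible component $W$ of $\overline{C}$ has dimension $d$ and meets $C$ (every component of a Zariski closure does), and $W$ sits inside an irreducible component $Z$ of $V$ of dimension at most $d$; since an irreducible variety admits no proper closed subvariety of equal dimension, $W = Z$ is a $d$-dimensional component of $V$. Connectedness of $C$ now yields $C = C_d$.

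It follows that every $\Sigma \in C$ lies on a unique $d$-dimensional irreducible component $Z_\Sigma$ of $V$ which is smooth at $\Sigma$. The assignment $\Sigma \mapsto Z_\Sigma$ is locally constant because the complement in $V$ of all components other than $Z_\Sigma$ is Zariski open, so connectedness of $C$ places $C$ inside a single component $Z$. Thus $\overline{C} \subseteq Z$ with $\dim \overline{C} = d = \dim Z$ and $Z$ irreducible, and the same ``no proper subvariety of equal dimension'' argument gives $\overline{C} = Z$, proving irreducibility of the Zariski closure. Smoothness of $Z = \overline{C}$ at each $\Sigma \in C$ is inherited from that of $V$: $T_\Sigma Z \subseteq T_\Sigma V$ has dimension at most $d$, while Krull's theorem gives $\dim T_\Sigma Z \geq \dim Z = d$. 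I expect the main obstacle to be the openness of $C_d$, where one must promote the purely numerical bound $\dim T_\Sigma V \leq d$ into the full semi-algebraic smoothness of \Cref{def:smooth}; this passes through the equivalence between smoothness and regularity of the local ring.
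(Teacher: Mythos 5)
Your proof is correct and follows essentially the same route as the paper: the chain $d \le \dim_\Sigma V \le \dim T_\Sigma V \le d$ coming from \cref{dim_model} and \cref{lemma:bound_Zariski_tangent_space} forces regularity of the local rings of $V$ along the component, hence a unique ($d$-dimensional) irreducible component through each point, and connectedness then traps the whole connected component inside a single irreducible $Z$ on which it is smooth. (The only blemish is the sentence deriving openness of $C_d$ from openness of the smooth locus, which is a slight non sequitur as stated; the correct justification --- that the complement of the remaining irreducible components is Zariski open, so $V$ locally coincides with $Z_\Sigma$ --- is exactly what you supply in your final paragraph.)
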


\begin{proof}
  Let $V \coloneqq \overline{\M(G,H)} \subseteq \Sym^2(\RR^n)$. Let $M$ be a connected component of $\M(G,H)$ and $Z$ an irreducible component of its Zariski closure $\overline{M}$ with $\dim(Z) = |E_G \cap E_H| + n$. Then at every point $\Sigma \in M \cap Z$, we have
  \begin{equation*}
    |E_G \cap E_H| + n = \dim(Z) \leq \dim(T_\Sigma Z) \leq \dim(T_\Sigma V) \leq |E_G \cap E_H| + n,
  \end{equation*}
  hence all inequalities are equalities, proving that the local rings
  $\mathcal{O}_{V,\Sigma}$ with $\Sigma \in M \cap Z$ are regular. Regular local rings are integral domains by \cite[Corollary~13.6]{kemper2011CommAlg}. Hence, for every $\Sigma \in M \cap Z$, there is only one
  irreducible component of $\overline{M}$ containing $\Sigma$, so $Z$ does not intersect any other irreducible component of $\overline{M}$ inside $M$.
  Therefore, as $M$ is connected, $\overline{M} = Z$ is irreducible, and $M = M \cap Z$ is smooth.
\end{proof}

\begin{remark}
  The proof of \Cref{theorem:dim_smooth} also shows that a connected component of $\M_1(G,H)$ of dimension $|E_G \cap E_H|$ is smooth and has irreducible Zariski closure. \Cref{proposition:edgeIntersection3} \eqref{item:smoothModelUnionNotComplete} contains a smooth model
  $\M_1(G,H)$ on $4$ vertices of dimension $3 = |E_G \cap E_H|$ with
  $G \cup H \neq K_4$, so the converse of \Cref{theorem:Piotr} is false. Even when $G \cup H \neq K_N$, \Cref{theorem:dim_smooth} still provides a sufficient criterion for smoothness. In fact, we know of no example of a smooth model $\M(G,H)$ (resp. $\M_1(G,H)$) having dimension less than $|E_G \cap E_H| + n$ (resp. $|E_G \cap E_H|$).
\end{remark}

We now move on to the vanishing ideal $\mathcal{I}(\M_{1}(G,H))$ of
double Markovian models.  Ordinary Gaussian graphical models have
rational parametrizations and their vanishing ideals are prime.
Vanishing ideals of double Markovian models need not be prime.  They
arise from conditional independence ideals by removing components
whose varieties do not intersect~$\PD_{n}$ and taking the radical.
We do not expect double Markovian CI ideals to be radical.  In the
discrete case, radicality fails even for ideals defined by the global
Markov condition~\cite[Example~4.9]{kahle2014positive}.  The~V\'amos
gaussoid from \cite[Example~13]{Geometry} yields a Gaussian CI ideal
which is not radical (in~the ring $\mathbb C[\sigma_{ij} : i \le j]$
where the diagonal of $\Sigma$ is \emph{not} normalized).

For the rest of the section we mostly restrict to the normalized
variance case, that is $\M_{1}(G,H)$ as opposed to~$\M(G,H)$.  This
removes duplication from the statements.  In most cases only small
changes are necessary to change a result for $\M_{1}(G,H)$ into one
for~$\M(G,H)$.

\begin{definition}\label{def_CI_ideal}
  Let $G$ and $H$ be two graphs on $N$ and $\Sigma = (\sigma_{ij})$ a
  generic symmetric matrix with ones on the diagonal.  The
  \emph{saturated conditional independence ideal}
  $\CI_{G,H} \subseteq \RR[\sigma_{ij}: i < j]$
  is the saturation of the ideal
  $\ideal{\sigma_{ij}, \det(\Sigma_{kC, lC}): ij \in E_H^\comp, kl \in
    E_G^\comp \text{ separated by } C}$ at the product of all
  principal minors of~$\Sigma$.  Similarly, the \emph{simplified
    saturated conditional independence ideal}
  $\SCI_{G,H} \subseteq \RR[\sigma_{ij}: i < j]$
  is the saturation of
  $\ideal{\sigma_{ij}, \det(\Sigma_{N \setminus l, N \setminus k}): ij
    \in E_H^\comp, kl \in E_G^\comp}$ at the product of all
  principal minors of~$\Sigma$.
\end{definition}

Clearly, $\SCI_{G,H} \subseteq \CI_{G,H}$ and their varieties in the
affine space of symmetric matrices with ones on the diagonal agree
over both $\RR$ and~$\CC$ by \Cref{lemma:Markovian}.
The ideals $\SCI_{G,H}$ and $\CI_{G,H}$ are saturations of determinantal
ideals of symmetric matrices.  The latter have been featured in the
work of Conca~\cite{conca1994divisor,conca1994symmetric,conca1994symmetricLadder},
but little seems to be known in general (at least in comparison to
ordinary determinantal ideals).  Double Markovian models might provide
an incentive to further study ideals generated by collections of
minors of sparse symmetric matrices.

In the following lemma we consider for a moment the scheme of
$\CI_{G,H}$, including the multiplicity information stored in the
coordinate ring.  The dual of the Zariski tangent space at the
identity $\mathbbm{1}_n$ is
$\mathfrak{m}/(\CI_{G,H} + \mathfrak{m}^2)$ where
$\mathfrak{m} \coloneqq \ideal{\sigma_{st}: s < t}$ is the maximal
ideal in $A \coloneqq \RR[\sigma_{st}: s < t]$ corresponding
to~$\mathbbm{1}_n$.  Its dimension is also known as the embedding
dimension of $(A/\CI_{G,H})_{\mathfrak{m}}$, denoted
$\edim((A/\CI_{G,H})_{\mathfrak{m}})$ and equals the dimension of the
tangent space at the identity.

\begin{lemma}\label{lemma:Zariski_tangent_space_at_1}
  Let $V \coloneqq \mathcal{V}(\CI_{G,H})$ as a subscheme of the
  affine space of symmetric $n \times n$ matrices with ones on the
  diagonal. Then the embedding dimension of $V$ at $\mathbbm{1}_n$ is
  $|E_G \cap E_H|$.
\end{lemma}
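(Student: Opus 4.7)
The plan is to compute $\dim_{\RR}\bigl(\mathfrak{m}/(\CI_{G,H}+\mathfrak{m}^2)\bigr)$ directly, which by definition equals the embedding dimension. Since $\mathfrak{m}/\mathfrak{m}^2$ has dimension $\binom{n}{2}$ (one basis vector per off-diagonal variable), the answer is $\binom{n}{2}$ minus the dimension of the $\RR$-span of the linear parts, at $\mathbbm{1}_n$, of a generating set of $\CI_{G,H}$. First I would reduce to working with an unsaturated defining ideal: every principal minor of $\Sigma$ evaluates to $1$ at $\mathbbm{1}_n$ and is therefore a unit in $A_{\mathfrak{m}}$, so saturating by a product of principal minors does not change the localization. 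Thus it suffices to analyze the linear forms $\sigma_{ij}$ for $ij\in E_H^{\comp}$ together with the almost-principal minors $\det(\Sigma_{kC,lC})$ for $kl\in E_G^{\comp}$ and $C$ separating $k$ and $l$ in $G$.

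The key computation is that the linear part of each such almost-principal minor at $\mathbbm{1}_n$ is simply $\sigma_{kl}$. Evaluated at $\mathbbm{1}_n$, the submatrix $\Sigma_{kC,lC}$ has its row indexed by $k$ and its column indexed by $l$ identically zero (since $k\neq l$ and $k,l\notin C$), while the remaining $|C|\times|C|$ block indexed by $C\times C$ is the identity matrix $I_{|C|}$. Laplace expansion along row $k$ then produces as linear part
\[
    \sigma_{kl}\cdot \det(I_{|C|}) \;+\; \sum_{c\in C}\pm\,\sigma_{kc}\cdot \det\bigl(\Sigma_{C,\{l\}\cup(C\setminus c)}(\mathbbm{1}_n)\bigr) \;=\; \sigma_{kl},
\]
because each of the remaining $|C|\times|C|$ cofactor matrices still contains the zero column indexed by $l$ and is therefore singular at $\mathbbm{1}_n$. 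Since the maximal statement $(kl\mid N\setminus kl)$ is a valid separation whenever $kl\in E_G^{\comp}$, the form $\sigma_{kl}$ is in the span for every non-edge $kl$ of $G$.

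Combining, the image of $\CI_{G,H}$ in $\mathfrak{m}/\mathfrak{m}^2$ coincides with
\[
    \Span\bigl\{\sigma_{ij} : ij\in E_H^{\comp}\bigr\}\;+\;\Span\bigl\{\sigma_{kl} : kl\in E_G^{\comp}\bigr\} \;=\; \Span\bigl\{\sigma_{ij} : ij\notin E_G\cap E_H\bigr\},
\]
a subspace of dimension $\binom{n}{2}-|E_G\cap E_H|$. Therefore $\edim\bigl((A/\CI_{G,H})_{\mathfrak{m}}\bigr)=|E_G\cap E_H|$, completing the argument. The main obstacle is the Laplace-expansion step above: one must confirm that no separator $C$ of larger size contributes a linear form beyond $\sigma_{kl}$. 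Structurally this is forced by the zero column $l$ of $\Sigma_{kC,lC}(\mathbbm{1}_n)$, which kills every cofactor attached to an entry of row $k$ other than $(k,l)$; once this is observed, the dimension count is mere bookkeeping.
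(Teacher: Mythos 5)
Your proposal is correct and follows essentially the same route as the paper: both reduce the embedding dimension to the codimension of the image of $\CI_{G,H}$ in $\mathfrak{m}/\mathfrak{m}^2$, dispose of the saturation by noting that the principal minors are units at $\mathbbm{1}_n$, and observe that each almost-principal minor $\det(\Sigma_{kC,lC})$ is congruent to $\pm\sigma_{kl}$ modulo $\mathfrak{m}^2$. Your Laplace-expansion justification of that last congruence is just a more explicit version of the paper's one-line remark that every term of the determinant other than $\sigma_{kl}\cdot\det(I_{|C|})$ involves at least two off-diagonal variables.
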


\begin{proof}
  The short exact sequence
  \begin{equation*}
    0 \rightarrow (\CI_{G,H} + \mathfrak{m}^2)/\mathfrak{m}^2 \rightarrow
    \mathfrak{m}/\mathfrak{m}^2 \rightarrow \mathfrak{m}/(\CI_{G,H} + \mathfrak{m}^2) \rightarrow 0
  \end{equation*}
  shows that
  \begin{equation*}
    \edim((A/\CI_{G,H})_{\mathfrak{m}}) = \binom{n}{2} -
      \dim_{\RR}((\CI_{G,H} + \mathfrak{m}^2)/\mathfrak{m}^2),
  \end{equation*}
  so it suffices to compute
  $d \coloneqq \dim_{\RR}((\CI_{G,H} +
  \mathfrak{m}^2)/\mathfrak{m}^2)$.  But $\mathfrak{m}^{2}$ contains
  all products of two or more $\sigma_{st}$ and therefore
  $\det(\Sigma_{kC, lC}) \equiv_{\mathfrak{m}^2} \pm \sigma_{kl}$.
  As every principal minor contains $1$ as a monomial, saturation does not change $d$. Hence, $d = |E_G^\comp \cup E_H^\comp| = \binom{n}{2} - |E_G \cap E_H|$.
\end{proof}

The lemma states $\dim(T_{\mathbbm{1}_{n}}V) = |E_G \cap E_H|$,
and similarly one shows $\dim(T_{\mathbbm{1}_n} V) = |E_G \cap E_H| + n$ if
$V = \mathcal{V}(\CI_{G,H})$ consists of all symmetric $n \times n$
matrices with no restriction on the diagonals, considering $\CI_{G,H}$
as an ideal in $\RR[\sigma_{st}: s \leq t]$.  The lemma also shows
that, when $\CI_{G,H}$ equals the vanishing ideal and $V$ is smooth
at $\mathbbm{1}_{n}$, then $\dim(\M_1(G,H)) = |E_G \cap E_H|$ and
$\dim(\M(G,H)) = |E_G \cap E_H| + n$.

The next proposition expresses almost-principal minors of symmetric
matrices via paths in a graph.  Here we use the conventions from
\cite{lauritzen1996graphical}.  A path $p$ traverses no vertex more
than once and $V(p) \subseteq N$ denotes this set of vertices,
$e(p) = kl$ the endpoints of $p$, and
$\sigma_{p} = \prod_{ij\in p}\sigma_{ij}$ the product over the
variables corresponding to edges of~$p$. The sign of $p$ is $\sgn(p) \coloneqq (-1)^{|V(p)| - 1}$.
With all this in place we can express almost-principal minors in terms
of path products.

\begin{proposition}
  \label{proposition:almostPrincipalMinorsViaPath}
  Let $H$ be a graph on the vertex set~$N$ and let
  $\Sigma = (\sigma_{ij})$ be a generic $n\times n$ symmetric matrix
  with $\sigma_{ij} = 0$ for all $ij \notin E_H$.
  Then
  \begin{equation*}
    (-1)^{k+l}\det(\Sigma_{N \setminus k, N \setminus l}) = \sum_{\substack{p \text{ path in } H, \\ e(p) = kl }} \sgn(p) \cdot \det(\Sigma_{N \setminus V(p), N \setminus V(p)}) \cdot \sigma_p.
  \end{equation*}
\end{proposition}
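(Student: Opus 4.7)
The plan is to apply the Leibniz formula to the almost-principal minor and then reorganize the resulting sum by the cycle structure of the relevant permutations. Writing permutations $\hat\tau$ of $N$ satisfying $\hat\tau(k) = l$ instead of bijections $N\setminus k \to N\setminus l$, a standard sign calculation yields
\[
  (-1)^{k+l}\det(\Sigma_{N \setminus k, N \setminus l}) \;=\; \sum_{\hat\tau \in S_N,\; \hat\tau(k) = l} \sgn(\hat\tau) \prod_{i \in N \setminus k} \sigma_{i,\hat\tau(i)}.
\]
The $(-1)^{k+l}$ accounts exactly for the discrepancy between $\sgn(\tau)$ (where $\tau$ is the bijection indexing the submatrix) and $\sgn(\hat\tau)$ for the extension $\hat\tau$ sending $k \mapsto l$.

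Next, I would decompose each $\hat\tau$ into disjoint cycles and isolate the unique cycle $c_0$ containing~$k$; by the constraint $\hat\tau(k)=l$ this cycle also contains~$l$. Writing $c_0 = (k, l, v_2, \ldots, v_{s-1})$, its contribution to the product is $\sigma_{l,v_2}\sigma_{v_2,v_3}\cdots\sigma_{v_{s-1},k}$ (note: no factor $\sigma_{k,l}$ appears, because the index $i=k$ is omitted). Under the sparsity assumption $\sigma_{ij}=0$ for $ij\notin E_H$, this contribution is nonzero precisely when the sequence $l, v_2, \ldots, v_{s-1}, k$ is a walk in $H$ with distinct vertices, i.e.\ (equivalently, since $H$ is undirected) a path $p$ in $H$ with $e(p) = kl$, $V(p) = V(c_0)$ and $\sigma_p = \sigma_{l,v_2}\sigma_{v_2,v_3}\cdots\sigma_{v_{s-1},k}$. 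Removing $c_0$ leaves a permutation $\rho$ of $N\setminus V(p)$, which may be arbitrary. Parametrising the original sum by the pair $(p, \rho)$ and using that $\sgn(\hat\tau) = \sgn(c_0)\cdot\sgn(\rho)$, I would then recognise the inner sum over $\rho$ as the Leibniz expansion of $\det(\Sigma_{N\setminus V(p), N\setminus V(p)})$.

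The main obstacle is careful sign bookkeeping. The two sign identities to verify are $(-1)^{k+l}\sgn(\tau) = \sgn(\hat\tau)$ for the extension (a standard but error-prone computation using how deleting/inserting a row and column shifts parity) and $\sgn(c_0) = (-1)^{s-1} = (-1)^{|V(p)|-1} = \sgn(p)$, which is immediate from the length-minus-one formula for cycle signs. A minor case to check is $k=l$, where $c_0 = (k)$ corresponds to the one-vertex path with $\sigma_p = 1$ and $\sgn(p)=1$, reducing the identity to the ordinary Leibniz expansion of the principal minor $\det(\Sigma_{N \setminus k, N \setminus k})$. Once the signs are settled, the identity falls out of the reindexing with no further computation.
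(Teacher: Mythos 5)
Your proposal is correct and follows essentially the same route as the paper's proof: expand the almost-principal minor by the Leibniz formula and trace the orbit of $k$ under the permutation to extract a path from $k$ to $l$ in $H$, with the complementary cycles reassembling into the principal minor $\det(\Sigma_{N\setminus V(p),N\setminus V(p)})$. The only difference is that you additionally carry out the sign bookkeeping ($(-1)^{k+l}$ via the cofactor expansion and $\sgn(c_0)=(-1)^{|V(p)|-1}=\sgn(p)$), which the paper explicitly declines to do since the signs are not used in the sequel.
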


This formula appears first in \cite{jones2005covariance}.  We include
a quick proof disregarding the signs which we do not use in the
sequel.

\begin{proof}
  By the Leibniz formula,
  $\det(\Sigma_{N \setminus l, N \setminus k}) = \sum_{\tau}
  \sgn(\tau) \prod_{i \in N \setminus l} \sigma_{i, \tau(i)}$, where
  the sum is over all bijective
  $\tau\colon N\setminus l \to N\setminus k$.  The summand
  corresponding to $\tau$ is non-zero if and only if each
  $\set{i, \tau(i)}$ with $\tau(i) \neq i$ is an edge of~$H$. Starting
  at the vertex $k$, the sequence $k, \tau(k), \tau^2(k), \dots$ is a
  path from $k$ to $l$ in~$H$, showing that
  \begin{align*}
    \det(\Sigma_{N \setminus l, N \setminus k})
    &= \sum_{\substack{p \text{ path in } H, \\ e(p) = kl }} \pm \sigma_p \cdot \sum_{\tau': N \setminus V(p) \overset{\cong}{\rightarrow} N \setminus V(p)} \sgn(\tau') \prod_{i \in N \setminus V(p)} \sigma_{i, \tau'(i)} \\
    &= \sum_{\substack{p \text{ path in } H, \\ e(p) = kl }} \pm \sigma_p \cdot \det(\Sigma_{N \setminus V(p), N \setminus V(p)}).\qedhere
  \end{align*}
\end{proof}

If all but one term in $\det(\Sigma_{N\setminus k, N\setminus l})$
vanish, the CI~ideal ought to be a monomial ideal.  Taking care of
details like saturation, it also follows that it agrees with the
vanishing ideal.

\begin{theorem}\label{theorem:CI_ideal}
  Let $G$ and $H$ be graphs on $N$ such that for every non-edge $kl$
  of $G$, there is at most one path $p$ in $H$ connecting $k$
  and~$l$. Then
  \[
    \SCI_{G,H} = \CI_{G,H} = \mathcal{I}(\M_1(G,H))
    =
    \ideal{\sigma_{ij}, \sigma_p: ij \not\in E_H,\; p \text{ a path in } H \text{ such that } e(p) \not\in E_G}.
  \]
\end{theorem}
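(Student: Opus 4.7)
Denote by $J$ the monomial ideal on the right-hand side of the statement. The strategy is to close the chain
\[
J \subseteq \SCI_{G,H} \subseteq \CI_{G,H} \subseteq \mathcal{I}(\M_1(G,H)) \subseteq J,
\]
collapsing all four ideals. The inclusion $\SCI_{G,H} \subseteq \CI_{G,H}$ is immediate from \cref{def_CI_ideal}, and $\CI_{G,H} \subseteq \mathcal{I}(\M_1(G,H))$ holds because each generator of the unsaturated ideal vanishes on $\M_1(G,H)$ by \cref{lemma:Markovian} while the principal minors at which one saturates are strictly positive on $\PD_{n,1}$. The remaining two inclusions, and in particular the last, contain the mathematical content.

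For $J \subseteq \SCI_{G,H}$, the generators $\sigma_{ij}$ with $ij \notin E_H$ are already present. For a path $p$ in $H$ with $e(p) = kl \notin E_G$, I would apply \cref{proposition:almostPrincipalMinorsViaPath} modulo $\{\sigma_{ij}: ij \notin E_H\}$; the at-most-one-path hypothesis collapses the right-hand sum to a single surviving term, yielding
\[
\det(\Sigma_{N \setminus l, N \setminus k}) \equiv \pm\, \sigma_p \cdot \det(\Sigma_{N \setminus V(p)}) \pmod{\ideal{\sigma_{ij} : ij \notin E_H}}.
\]
Hence $\sigma_p \cdot \det(\Sigma_{N \setminus V(p)})$ lies in the defining ideal of $\SCI_{G,H}$, and saturating at the principal minor $\det(\Sigma_{N \setminus V(p)})$ produces $\sigma_p \in \SCI_{G,H}$.

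The main obstacle is the reverse inclusion $\mathcal{I}(\M_1(G,H)) \subseteq J$. Since $J$ is a squarefree monomial ideal, it is radical and $\mathcal{I}(\mathcal{V}_\RR(J)) = J$, so it suffices to prove $\mathcal{V}_\RR(P) \subseteq \overline{\M_1(G,H)}$ for each minimal coordinate prime $P$ of $J$. Each such $\mathcal{V}_\RR(P)$ is an affine subspace of the space of symmetric $n \times n$ matrices with ones on the diagonal and contains $\mathbbm{1}_n$; consequently $\mathcal{V}_\RR(P) \cap \PD_{n,1}$ is a nonempty Euclidean-open neighborhood of $\mathbbm{1}_n$ inside $\mathcal{V}_\RR(P)$ and therefore Zariski-dense in the irreducible variety $\mathcal{V}_\RR(P)$. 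It then remains to check $\mathcal{V}_\RR(P) \cap \PD_{n,1} \subseteq \M_1(G,H)$: for $\Sigma$ in this intersection all $\sigma_{ij}$ with $ij \notin E_H$ vanish by construction, so only $(\Sigma^{-1})_{kl} = 0$ for $kl \notin E_G$ has to be verified. By the adjoint formula this amounts to $\det(\Sigma_{N \setminus l, N \setminus k}) = 0$, and invoking \cref{proposition:almostPrincipalMinorsViaPath} once more reduces the right-hand side under the uniqueness hypothesis to either $0$ (no path from $k$ to $l$ in $H$) or a single term $\pm\sigma_p \cdot \det(\Sigma_{N\setminus V(p)})$ where $e(p) = kl$; in the latter case $\sigma_p = 0$ because $\sigma_p$ is a generator of $J \subseteq P$.

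The hard part is thus the translation between the ideal-theoretic statement $\mathcal{I}(\M_1(G,H)) \subseteq J$ and the real semi-algebraic geometry of the coordinate subspaces $\mathcal{V}_\RR(P)$, which requires both the density argument around $\mathbbm{1}_n$ and a careful use of the uniqueness hypothesis in two directions: once to ensure that the path formula provides a single monomial relation forcing $\sigma_p \in \SCI_{G,H}$, and again to rule out unexpected components in $\mathcal{V}_\RR(J)$ that would destroy the monomial description of the vanishing ideal. Without uniqueness, products of several path variables would only need to vanish in specific linear combinations, and the resulting ideal would cease to be monomial.
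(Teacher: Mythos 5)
Your proposal is correct and uses essentially the same ingredients as the paper's proof: the path expansion of \cref{proposition:almostPrincipalMinorsViaPath} collapsed to one term by the uniqueness hypothesis, saturation at the principal minor $\det(\Sigma_{N\setminus V(p)})$ to extract $\sigma_p$, and the observation that the components of the monomial ideal are coordinate subspaces whose intersection with $\PD_{n,1}$ is Zariski-dense (the paper phrases this as each component meeting $\PD_{n,1}$ in a smooth real point). The only difference is organizational: the paper computes $\SCI_{G,H}$ equal to the monomial ideal directly and then matches it with the vanishing ideal, whereas you close a four-term chain of inclusions, which is a valid and slightly more explicit packaging of the same argument.
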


\begin{proof}
  Let $\Sigma$ be the generic symmetric matrix with ones on the
  diagonal and zeros corresponding to non-edges of~$H$.
  If $ij \notin E_H$, all terms of almost-principal minors of $\Sigma$ which contain
  $\sigma_{ij}$ can be neglected since
  $\sigma_{ij} \in \SCI_{G,H} \subseteq \CI_{G,H}$.  By
  Proposition~\ref{proposition:almostPrincipalMinorsViaPath}, if $p$
  is the unique path connecting $k$ and $l$ inside $H$, then
  $\det(\Sigma_{N \setminus k, N \setminus l}) = \pm \det(\Sigma_{N
    \setminus V(p), N \setminus V(p)}) \cdot \sigma_p$, so
  $\sigma_{p}$ lies in the saturated simplified conditional independence
  ideal.  If there exists no such path, this almost-principal minor
  vanishes. Since the square-free monomial ideal
  \begin{equation*}
    \ideal{\sigma_{ij}, \sigma_p: ij \not\in E_H, p \text{ a path in } H \text{ such that } e(p) \not\in E_G}
  \end{equation*}
  agrees with its saturation at the product of all principal minors of
  $\Sigma$, it equals~$\SCI_{G,H}$.

  To show that this (radical) ideal equals the vanishing ideal, it
  suffices to see that each of its components intersects~$\PD_{n,1}$
  in a smooth real point.  This is clear since all irreducible
  components of $\mathcal{V}_{\CC}(\SCI_{G,H})$ are coordinate
  subspaces.
\end{proof}

The hypothesis of Theorem~\ref{theorem:CI_ideal} may seem restrictive
but, for example, it includes the case that $H$ is a forest and $G$ is
arbitrary.  On the other hand, it is easy to find an example on four
vertices where $H$ is a cycle and $\SCI_{G,H}$ is not a monomial ideal.

Determining vanishing ideals of ordinary Gaussian graphical models can
already be complicated, see for example~\cite{misra2020gaussian}.
However, it seems plausible that a divide-and-conquer approach based
on toric fiber products as in \cite{TFP-II,sullivant07:_toric} is
applicable for some suitably decomposable graph pairs~$G,H$.

\subsection{Connectedness}\label{sec:connectedness}
In this subsection we study connectedness of $\M(G,H)$ in the real
topology.  If the vanishing ideal is known and simple enough, the
results are easy as in the next corollary.  \Cref{thm:connected}
contains a sufficient condition based on connectedness in $G$ and~$H$.

\begin{corollary}\label{corollary:coordinate_planes}
  Under the hypotheses of Theorem~\ref{theorem:CI_ideal}, the model
  $\M(G,H)$ is connected.   Moreover the following are equivalent.
  \begin{enumerate}
  \item $\M(G,H)$ is smooth.
  \item $\M(G,H)$ is irreducible.
  \item $\M(G,H)$ has the maximal dimension $|E_G \cap E_H| + n$.
  \end{enumerate}
  In this case, $\M(G,H) = \M(G \cap H)^{-1}$ is an inverse graphical model, hence a spectrahedron.
\end{corollary}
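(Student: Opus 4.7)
My plan is to exploit Theorem~\ref{theorem:CI_ideal} to reduce everything to a statement about a squarefree monomial ideal. The same argument, carried out with free diagonal variables, should show that the vanishing ideal of $\M(G,H) \subseteq \Sym^2(\RR^n)$ is generated by the monomials listed in Theorem~\ref{theorem:CI_ideal}. The irreducible components of $\overline{\M(G,H)}$ then correspond to the minimal primes $\mathfrak{p} = \ideal{\sigma_{ij} : ij \in P}$ of this ideal; each such component is a coordinate subspace passing through the identity matrix $\mathbbm{1}_n$. The key geometric input is that, because the almost-principal minors for non-edges of $G$ lie in the vanishing ideal, they vanish on every such component, so each coordinate subspace intersected with $\PD_n$ already satisfies all the conditions defining $\M(G,H)$.

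Connectedness then drops out: $\M(G,H)$ is a finite union of convex sets (linear subspaces intersected with the convex cone $\PD_n$), all of which contain $\mathbbm{1}_n$.

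For the equivalences I would close the cycle (3)~$\Rightarrow$~(1)~$\Rightarrow$~(2)~$\Rightarrow$~(3). The first follows by applying Theorem~\ref{theorem:dim_smooth} to $\M(G,H)$ itself, which by the connectedness just shown is its own unique connected component of maximal dimension. The second uses that smoothness at $\mathbbm{1}_n$ forces this point onto a single irreducible component, which is impossible if more than one coordinate-subspace component exists, since all of them meet at $\mathbbm{1}_n$. For the hardest implication, (2)~$\Rightarrow$~(3), I invoke the fact that a squarefree monomial ideal is prime if and only if it is generated by variables; irreducibility forces the vanishing ideal to be such a prime. Reading off the degree-one generators from Theorem~\ref{theorem:CI_ideal} then identifies this prime as $\ideal{\sigma_{ij} : ij \notin E_G \cap E_H}$, whose variety has the required dimension $|E_G \cap E_H|+n$.

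Finally, in the irreducible case $\M(G,H)$ equals the intersection of the coordinate subspace $\set{\Sigma : \sigma_{ij} = 0 \text{ for } ij \notin E_G \cap E_H}$ with $\PD_n$, which by the definition of inverse graphical models recalled in Section~\ref{CIIntro} is precisely $\M(G\cap H)^{-1}$; being a linear slice of the PD cone, it is a spectrahedron. I anticipate the main obstacle to be the step (2)~$\Rightarrow$~(3), as it requires bridging the geometric notion of irreducibility of $\M(G,H)$ with the combinatorial structure of minimal primes of the monomial ideal $I$ provided by Theorem~\ref{theorem:CI_ideal}.
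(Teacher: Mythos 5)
Your proposal is correct and follows essentially the same route as the paper: connectedness via the star-shaped union of coordinate subspaces through $\mathbbm{1}_n$, the implication from maximal dimension via \cref{theorem:dim_smooth}, and the implication from irreducibility via primality of the square-free monomial ideal of \cref{theorem:CI_ideal}, which forces it to be generated by the variables $\sigma_{ij}$ with $ij \notin E_G \cap E_H$ and hence identifies the model with $\M(G\cap H)^{-1}$. The only cosmetic difference is that you close the cycle through an explicit $(1)\Rightarrow(2)$ step (uniqueness of the component through $\mathbbm{1}_n$ at a smooth point), which the paper leaves implicit.
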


\begin{proof}
  As a union of coordinate subspaces intersected with $\PD_n$,
  $\M(G,H)$ is star-shaped with respect to the identity matrix and
  thus connected.  Therefore, having the maximal dimension implies
  smoothness by Theorem~\ref{theorem:dim_smooth}, and smoothness together with connectedness implies irreducibility because regular local rings are integral domains~\cite[Corollary~13.6]{kemper2011CommAlg}.
  By Theorem~\ref{theorem:CI_ideal}, irreducibility implies that the
  square-free monomial ideal $\SCI_{G,H}$ is prime.  This is
  equivalent to the condition that for a path $p$ inside $H$ with
  $e(p) = kl \in E_G^\comp$ there exists an edge
  $ij \in p \cap E_G^\comp$. Thus, if $G'$ is the graph on $N$ which
  is obtained from $G$ by adding all edges in
  $E_G^\comp \cap E_H^\comp$, then
  $\M(G,H) = \M(G',H) = \M(G \cap H)^{-1}$, in particular $\M(G,H)$
  has dimension $|E_G \cap E_H| + n$.
\end{proof}

\begin{theorem}\label{thm:connected}
  Let $G$ and $H$ be graphs on $N$ with the property that there exists
  $i \in N$ such that for all non-edges $kl \in E_G^\comp$, every path
  in $H$ connecting $k$ and $l$ contains~$i$. Then the model $\M(G,H)$
  is connected.
\end{theorem}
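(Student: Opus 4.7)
The plan is to construct, for any $\Sigma\in\M(G,H)$, a continuous path in $\M(G,H)$ from $\Sigma$ to the identity matrix~$\mathbbm{1}_n$. I~would do this in two stages: first deform $\Sigma$ to a block-diagonal matrix, then reach $\mathbbm{1}_n$ by convexity of a covariance graph model.

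First I would extract a structural consequence of the hypothesis: if $C$ is a connected component of $H\setminus\set{i}$ and $k,l\in C$ are distinct, then there is a $k$--$l$ path inside $C$ that avoids~$i$, so the hypothesis forces $kl\in E_G$; thus $G|_C$ is complete. Moreover, by the definition of components, no edge of $H$ connects distinct components $C_a$ and $C_b$. So every $\Sigma\in\M(G,H)$, after reordering $N$ as $\set{i}\cup C_1\cup\dots\cup C_r$, has diagonal entry $s=\sigma_{ii}$, blocks $v_a=\Sigma_{C_a,i}$ and $M_a=\Sigma_{C_a,C_a}$, and zeros everywhere else in its off-diagonal.

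The technical core is the path $\Sigma(t)$ for $t\in[0,1]$ obtained by scaling each $v_a$ to $(1-t)v_a$. Positive definiteness along the path follows from the Schur complement $S(t)=s-(1-t)^2\sum_a v_a^T M_a^{-1}v_a\ge S(0)>0$, and the covariance zero pattern is preserved by construction. For the concentration pattern, the block matrix inversion formula yields
\[
(\Sigma(t)^{-1})_{kl}=\frac{(1-t)^2}{S(t)}\,(M_a^{-1}v_a)_k\,(v_b^T M_b^{-1})_l\quad\text{for }k\in C_a,\,l\in C_b,\,a\ne b,
\]
together with an analogous expression carrying factor $(1-t)$ for the entries $(\Sigma(t)^{-1})_{il}$. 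Evaluating at $t=0$, an entry that must vanish in $\Sigma^{-1}$ forces one of the two scalar factors to be zero, hence the entry vanishes for every~$t$. By the structural observation, every non-edge of $G$ either lies between distinct blocks $C_a,C_b$ or involves $i$, which are exactly the two cases covered by the block-inverse formula; so $\Sigma(t)\in\M(G,H)$ throughout.

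The endpoint $\Sigma(1)=[s]\oplus M_1\oplus\cdots\oplus M_r$ is block diagonal, and each $M_a$ lies in the covariance graph model $\M(K_{C_a},H|_{C_a})$, which is the intersection of a linear subspace of $\Sym^2(\RR^{C_a})$ with the PD cone and therefore convex; similarly $s$ ranges over the convex set $\RR_{>0}$. Linearly interpolating $s$ to $1$ and each $M_a$ to $\mathbbm{1}_{C_a}$ assembles into a path from $\Sigma(1)$ to $\mathbbm{1}_n$ inside $\M(G,H)$. I~expect the main obstacle to be the concentration verification in the previous paragraph; it works precisely because the hypothesis forces $G|_{C_a}$ to be complete, so the only concentration constraints are the ``cross-block'' ones whose block-inverse expressions factor multiplicatively in the scaling parameter $(1-t)$.
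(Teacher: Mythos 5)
Your proof is correct, and your homotopy is in fact the same one the paper uses: scaling the off-diagonal entries of the $i$-th row and column of $\Sigma$ linearly to zero (the paper writes this as a Hadamard product with a rank-structured matrix of $\varepsilon$'s and $1$'s) and then exploiting convexity of the resulting linear slice of the PD cone, which in the paper is called $\M(H\setminus i)^{-1}$ and in your argument appears as the set of block-diagonal matrices $[s]\oplus M_1\oplus\cdots\oplus M_r$. What differs, and genuinely so, is the verification that the path stays inside $\M(G,H)$. The paper keeps the hypothesis in its raw form and invokes the path expansion of almost-principal minors (Proposition~\ref{proposition:almostPrincipalMinorsViaPath}): every path in $H$ between the endpoints of a $G$-non-edge passes through $i$, so each monomial in the expansion picks up a factor $\varepsilon^2$ (or $\varepsilon$) and the relevant principal minors are untouched; positive definiteness comes from the Schur product theorem. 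You instead first extract the structural consequence that $G$ restricted to each connected component of $H$ minus the vertex $i$ is complete and that $\Sigma$ has an arrow/block shape, and then verify everything by explicit $2\times 2$ block inversion and the Schur complement; the key point that the cross-block and $i$-adjacent entries of $\Sigma(t)^{-1}$ factor as a power of $(1-t)$ times a product of scalars, one of which must vanish at $t=0$, is a clean substitute for the path-expansion argument. Your route is more elementary (no determinantal expansion needed) and makes the combinatorial content of the hypothesis more transparent; the paper's route is shorter given that Proposition~\ref{proposition:almostPrincipalMinorsViaPath} is already available and is the same tool used for Theorem~\ref{theorem:CI_ideal}. Both are complete proofs.
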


\begin{proof}
  In this proof we denote by $H \setminus i$ the graph on $N$ obtained
  from $H$ by deleting all edges incident with~$i$ but keeping $i$ as a vertex. The model
  $\M(H \setminus i)^{-1}$ is connected as it is the intersection of a
  linear space with the convex set~$\PD_n$, and the intersection of
  convex sets is convex, hence connected.  Clearly,
  $\Sigma \in \M(H \setminus i)^{-1}$ if and only if
  $\Sigma \in \M(H)^{-1}$ and $\sigma_{ij} = 0$ for all $j \neq
  i$. The determinantal identity of
  Proposition~\ref{proposition:almostPrincipalMinorsViaPath}, and the
  assumptions on $G$ and $H$ imply
  $\M(H \setminus i)^{-1} \subseteq \M(G,H)$. Now, let
  $\Sigma \in \M(G,H)$ be arbitrary.  It suffices to find a path from
  $\Sigma$ to some matrix in~$\M(H \setminus i)^{-1}$.

  For $\varepsilon \in [0,1]$ consider
  \begin{equation*}
    \Sigma^\varepsilon \coloneqq \Sigma \odot
    \begin{pmatrix}
      1 & \dots & 1 & \varepsilon & 1 & \dots & 1 \\
      \vdots & & \vdots & \vdots &  \vdots & & \vdots\\
      1 & \dots & 1 & \varepsilon & 1 & \dots & 1 \\
      \varepsilon & \dots & \varepsilon & 1 & \varepsilon & \dots & \varepsilon \\
      1 & \dots & 1 & \varepsilon & 1 & \dots & 1 \\
      \vdots &  &  \vdots & \vdots &  \vdots & &  \vdots \\
      1 & \dots & 1 & \varepsilon & 1 & \dots & 1
    \end{pmatrix},
  \end{equation*}
  where the second factor has entries $\varepsilon$ in the $i$-th row
  and column, but $1$ in entry~$ii$.  The symbol $\odot$ denotes the
  Hadamard product which multiplies matrices entry-wise.  Then
  $\Sigma^\varepsilon$ is symmetric and positive definite for all
  $\varepsilon \in [0,1]$ as it is the Hadamard product of a positive
  definite matrix and a positive semi-definite matrix with strictly
  positive diagonal entries.  Moreover, $\Sigma^1 = \Sigma$ and
  $\Sigma^0 \in \M(H \setminus i)^{-1}$, so it suffices to show that
  $\Sigma^\varepsilon \in \M(G,H)$ for all $\varepsilon \in
  [0,1]$. This~follows from the assumptions on $G$ and $H$ as for all
  $kl \in E_G^\comp$ with $k \neq i \neq l$ we have
  \begin{align*}
    \det((\Sigma^\varepsilon)_{N \setminus k, N \setminus l}) &= \sum_{\substack{p \text{ path in } G \\ e(p) = kl}} \sgn(p) \cdot \sigma^\varepsilon_p \cdot \det((\Sigma^\varepsilon)_{N \setminus V(p), N \setminus V(p)}) \\
                                                              &= \varepsilon^2 \cdot \sum_{\substack{p \text{ path in } G \\ e(p) = kl}} \sgn(p) \cdot \sigma_p \cdot \det(\Sigma_{N \setminus V(p), N \setminus V(p)}) \\
                                                              &= \varepsilon^2 \cdot \det(\Sigma_{N \setminus k, N \setminus l}) = 0,
  \end{align*}
  using in the second step that, by assumption, $i \in V(p)$ for all
  occurring paths $p$, so that the occurring principal minors of
  $\Sigma^\varepsilon$ agree with the corresponding principal minors
  of~$\Sigma$.  Moreover, each monomial $\sigma^\varepsilon_p$
  contains exactly two variables that are scaled by~$\varepsilon$. If
  one of $k$ and $l$ agrees with~$i$, the same calculation works with
  $\varepsilon$ instead of~$\varepsilon^2$.
\end{proof}

\subsection{The decomposition theorem}
\label{sec:structure-theorem}

Section~\ref{sec:classifyByCommonE} contains a classification of
models with small $|E_{G}\cap E_{H}|$.  This is based on the following
decomposition theorem, whose proof also works for complex hermitian
positive definite matrices.

\begin{theorem}\label{theorem:Aida}
  Let $G$, $H$ be two graphs on the vertex set~$N$. Let
  $V_1, \ldots, V_r$ be a partition of~$N$ such that each $V_i$ is the
  vertex set of a connected component of $G \cap H$, considered as the
  graph on $N$ with edge set $E_G \cap E_H$. Then
  \begin{equation*}
    \M(G,H) = \bigoplus_{i=1}^r \M(G|_{V_i}, H|_{V_i}).
  \end{equation*}
  In words, every $\Sigma \in \M(G,H)$ is block-diagonal with $r$
  blocks whose rows and columns are indexed by the~$V_i$.
\end{theorem}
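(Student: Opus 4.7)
The plan is to prove both inclusions of $\M(G,H) = \bigoplus_{i=1}^{r} \M(G|_{V_i}, H|_{V_i})$. The direction $\supseteq$ should be a routine check: for a block-diagonal $\Sigma = \bigoplus_{i} \Sigma^{(i)}$ with $\Sigma^{(i)} \in \M(G|_{V_i}, H|_{V_i})$, the inverse $K = \bigoplus_{i} (\Sigma^{(i)})^{-1}$ is block-diagonal as well, so the $H$-vanishing pattern on $\Sigma$ and the $G$-vanishing pattern on $K$ are inherited from the restricted models within each block and are trivially satisfied (by zero entries) between blocks.

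For the harder inclusion, I would fix $\Sigma \in \M(G,H)$ and a component $V_s$, set $W := N \setminus V_s$, and decompose
\[
  \Sigma = \begin{pmatrix} A & B \\ B^T & C \end{pmatrix}, \qquad
  K = \Sigma^{-1} = \begin{pmatrix} D & E \\ E^T & L \end{pmatrix},
\]
with blocks indexed by $V_s$ and~$W$. The goal is to show $B = 0$: once this holds for every $s$, the matrix $\Sigma$ is block-diagonal in the finer partition $V_1, \ldots, V_r$, each block $\Sigma_{V_i V_i}$ is positive definite with inverse $K_{V_i V_i}$, and the within-block zero patterns immediately give $\Sigma_{V_i V_i} \in \M(G|_{V_i}, H|_{V_i})$.

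The crux is the observation that on $V_s \times W$ the supports of $B$ and $E$ are \emph{disjoint}: a nonzero $B_{ij}$ requires $ij \in E_H$, whereas a nonzero $E_{ij}$ requires $ij \in E_G$, and since $V_s$ is a connected component of $G \cap H$, no edge of $E_G \cap E_H$ joins $V_s$ to~$W$. Therefore the Frobenius pairing vanishes: $\tr(B^T E) = 0$. I would then substitute the Schur complement identity $E = -A^{-1} B S^{-1}$, where $S = C - B^T A^{-1} B$ is positive definite (the Schur complement of~$A$ in the positive definite matrix~$\Sigma$), to rewrite the trace equation as
\[
  0 = -\tr(B^T A^{-1} B S^{-1}) = -\bigl\lVert A^{-1/2} B S^{-1/2} \bigr\rVert_F^2,
\]
which forces $B = 0$ because $A^{-1/2}$ and $S^{-1/2}$ are invertible.

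The step I expect to be hardest is spotting the right formulation, namely coupling the disjoint-support property with the Schur complement identity, which collapses the whole argument to the positivity of a Frobenius norm. Once this setup is in place, no induction and no appeal to the path formula of \cref{proposition:almostPrincipalMinorsViaPath} are needed.
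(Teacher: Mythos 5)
Your proof is correct, and at the decisive step it takes a genuinely different route from the paper's. Both arguments hinge on the same key observation: since no edge of $G \cap H$ joins $V_s$ to $W$, the supports of $B = \Sigma_{V_s W}$ and $E = (\Sigma^{-1})_{V_s W}$ are disjoint, so $\tr(B^T E) = 0$. From there the paper conjugates $\Sigma$ by $\operatorname{diag}(A_V^{-1}, A_W^{-1})$ (symmetric square roots of the diagonal blocks) to normalize both diagonal blocks to identities, computes $\tr(\Sigma') = \tr(\Sigma'^{-1}) = n$, and applies the equality case of Cauchy--Schwarz to $T$ and $T^{-1}$ with $T^2 = \Sigma'$ to conclude that $\Sigma'$ is a scalar matrix; it also reduces to $r = 2$ by induction. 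You instead substitute the block-inversion identity $E = -A^{-1} B S^{-1}$ with $S = C - B^T A^{-1} B$ positive definite, so that the vanishing trace becomes $\bigl\lVert A^{-1/2} B S^{-1/2} \bigr\rVert_F^2 = 0$ and hence $B = 0$ directly. Your version is shorter and more local: it needs no normalization, no Cauchy--Schwarz, and no induction on the number of components, since killing $\Sigma_{V_s, N \setminus V_s}$ for each $s$ separately already yields the full block-diagonal structure. Like the paper's argument, yours uses positive definiteness only through the positive definiteness of $A$ and of the Schur complement $S$, and it transfers verbatim to Hermitian positive definite matrices. The trade-off is minor: the paper's normalization yields the slightly stronger intermediate statement that the rescaled matrix is a multiple of the identity, but for the theorem as stated your argument loses nothing.
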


\begin{proof}
  Inductively, it suffices to consider the case $r = 2$. We set
  $V \coloneqq V_1$ and $W \coloneqq V_2$. It is then enough to show
  that every matrix
	\begin{equation*}
          \Sigma =
          \left(\begin{array}{c|c}
                  \Sigma_{VV} & \Sigma_{VW} \\ \hline
                  \Sigma_{VW}^t & \Sigma_{WW}
	\end{array}\right) \in \M(G,H)
	\end{equation*}
	is block-diagonal, i.e., $\Sigma_{VW} = 0$.  We partition
        $\Sigma^{-1}$ in the same way as~$\Sigma$. By assumption,
        $G \cap H$ contains no edges between $V$ and $W$. This implies
        that the matrix $\Sigma_{VW} (\Sigma^{-1})_{VW}^t$ has only
        zeros on the diagonal as for all $v \in V$,
        \[
          (\Sigma_{VW} (\Sigma^{-1})_{VW}^t)_{vv} = \sum_{w \in W}
          \Sigma_{vw} (\Sigma^{-1})_{vw} = \sum_{w \in W} 0 = 0.
        \]
        In particular, $\tr(\Sigma_{VW} (\Sigma^{-1})_{VW}^t) = 0$. As
        $\Sigma_{VV}$ and $\Sigma_{WW}$ are positive definite, there
        exist symmetric square roots $A_V$ and $A_W$ such that
        $A_V^2 = \Sigma_{VV}$ and $A_W^2 = \Sigma_{WW}$. We now define
	\begin{equation*}
	\Sigma' \coloneqq
	\left(\begin{array}{c|c}
	A_V^{-1} & 0 \\ \hline
	0 & A_W^{-1}
	\end{array}\right) \cdot
	\Sigma \cdot
	\left(\begin{array}{c|c}
	A_V^{-1} & 0 \\ \hline
	0 & A_W^{-1}
	\end{array}\right) =
	\left(\begin{array}{c|c}
	\mathbbm{1}_V & A_V^{-1} \Sigma_{VW} A_W^{-1} \\ \hline
	A_W^{-1} \Sigma_{VW}^t A_V^{-1} & \mathbbm{1}_W
	\end{array}\right).
	\end{equation*}
	Clearly, $\tr(\Sigma') = |V| + |W| = n$. For the inverse matrix we have
	\begin{equation*}
	\Sigma'^{-1} =
	\left(\begin{array}{c|c}
	A_V & 0 \\ \hline
	0 & A_W
	\end{array}\right) \cdot
	\Sigma^{-1} \cdot
	\left(\begin{array}{c|c}
	A_V & 0 \\ \hline
	0 & A_W
	\end{array}\right) =
	\left(\begin{array}{c|c}
	(\Sigma'^{-1})_{VV} & A_V (\Sigma^{-1})_{VW} A_W \\ \hline
	A_W (\Sigma^{-1})_{VW}^t A_V & (\Sigma'^{-1})_{WW}
	\end{array}\right).
	\end{equation*}
	Now observe that
        $\Sigma'_{VW} (\Sigma'^{-1})_{VW}^t = A_V^{-1} \Sigma_{VW}
        (\Sigma^{-1})_{VW}^t A_V$ as the product
        $A_W^{-1} A_W = \mathbbm{1}_W$ in the middle cancels out.
        Since the trace is cyclic, we have
	\begin{equation*}
	\tr(\Sigma'_{VW} (\Sigma'^{-1})_{VW}^t) = \tr(A_V^{-1} \Sigma_{VW} (\Sigma^{-1})_{VW}^t A_V) = \tr(\Sigma_{VW} (\Sigma^{-1})_{VW}^t A_V A_V^{-1}) = 0.
	\end{equation*}
	Moreover,
	\begin{equation*}
	\mathbbm{1}_V = (\Sigma' \Sigma'^{-1})_{VV} = (\Sigma'^{-1})_{VV} + \Sigma'_{VW} (\Sigma'^{-1})_{VW}^t,
	\end{equation*}
	implying $\tr((\Sigma'^{-1})_{VV}) = \tr(\mathbbm{1}_V) =
        |V|$. Similarly, $\tr((\Sigma'^{-1})_{WW}) = |W|$, so
        $\tr(\Sigma'^{-1}) = |V| + |W| = n$. As $\Sigma'$ is real
        symmetric positive definite there exists a symmetric
        square root~$T$ with $T^2 = \Sigma'$ and
        thus also $T^{-2} = \Sigma'^{-1}$.  Using the inner
        product
        $\langle X, Y \rangle = \tr(XY) = \sum_{i,j = 1}^n x_{ij}
        y_{ij}$ on the space of real symmetric matrices,
	\begin{gather*}
	\langle T, T \rangle = \tr(T^2) = \tr(\Sigma') = n, \\
	\langle T^{-1}, T^{-1} \rangle = \tr(T^{-2}) = \tr(\Sigma'^{-1}) = n, \\
	\langle T, T^{-1} \rangle = \tr(\mathbbm{1}_n) = n.
	\end{gather*}
	Therefore, we have equality in the Cauchy--Schwarz inequality
	\begin{equation*}
	n^2 = \langle T, T^{-1} \rangle^2 \leq \langle T, T \rangle \cdot \langle T^{-1}, T^{-1} \rangle = n^2,
	\end{equation*}
	implying that $T$ and $T^{-1}$ are linearly dependent as
        matrices, i.e., $T = \lambda T^{-1}$ for some
        $\lambda \in \RR$. This implies
        $\Sigma' = T^2 = \lambda \mathbbm{1}_n$. In particular,
        $0 = \Sigma'_{VW} = A_V^{-1} \Sigma_{VW} A_W^{-1}$ as
        matrices. But this is equivalent to $\Sigma_{VW} = 0$, as
        desired.
\end{proof}

If all $V_i$ are single vertices we get the following.

\begin{corollary}\label{c:trivialModel}
	We have $\M_1(G,H) = \set{\mathbbm{1}_n}$ if and only if
	$E_G \cap E_H = \emptyset$.
\end{corollary}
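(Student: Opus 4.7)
The plan is to handle the two implications separately, with the backward direction being essentially an immediate specialization of Theorem~\ref{theorem:Aida} and the forward direction following from an explicit construction already implicit in the discussion preceding the corollary.

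For the backward direction, suppose $E_G \cap E_H = \emptyset$. Then the graph with edge set $E_G \cap E_H$ has no edges at all, so its connected components are precisely the singletons $\{i\}$ for $i \in N$. Applying Theorem~\ref{theorem:Aida} with $V_i = \{i\}$ yields
\begin{equation*}
  \M(G,H) = \bigoplus_{i=1}^n \M\bigl(G|_{\{i\}}, H|_{\{i\}}\bigr),
\end{equation*}
and each summand is just $\PD_1 = \RR_{>0}$. Thus every $\Sigma \in \M(G,H)$ is diagonal with positive entries. Intersecting with $\PD_{n,1}$ forces all diagonal entries to be $1$, so $\M_1(G,H) = \{\mathbbm{1}_n\}$.

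For the forward direction, I argue by contrapositive: if $E_G \cap E_H \neq \emptyset$, I exhibit a one-parameter family in $\M_1(G,H)$. Pick any edge $ij \in E_G \cap E_H$ and, for $t \in (-1,1)$, let $\Sigma(t)$ be the correlation matrix with $\sigma_{ij}(t) = \sigma_{ji}(t) = t$, all other off-diagonal entries zero, and ones on the diagonal (this is the matrix displayed right before the corollary, after a permutation placing $ij$ in the top-left block). Positive definiteness for $t \in (-1,1)$ follows because $\Sigma(t)$ is block-diagonal with a single $2\times 2$ block $\begin{psmallmatrix} 1 & t \\ t & 1 \end{psmallmatrix}$ of determinant $1-t^2 > 0$ and $1\times 1$ blocks equal to $1$. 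Since $\sigma_{kl}(t) = 0$ whenever $kl \neq ij$, the non-edge conditions from $H$ are satisfied. The inverse $\Sigma(t)^{-1}$ is block-diagonal with exactly the same sparsity pattern, so $(\Sigma(t)^{-1})_{kl} = 0$ whenever $kl \neq ij$ (and $kl$ off-diagonal); since $ij \in E_G$, this verifies the non-edge conditions from $G$. Hence $\Sigma(t) \in \M_1(G,H)$ for all $t \in (-1,1)$, and for $t \neq 0$ we obtain a matrix distinct from $\mathbbm{1}_n$.

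There is no substantial obstacle: the backward implication is a direct corollary of Theorem~\ref{theorem:Aida} once one observes that the components of the empty edge graph are singletons, and the forward implication is witnessed by the explicit family constructed above. The only care needed is to check that the candidate $\Sigma(t)$ actually lies in the model, which reduces to the trivial observation that the sparsity pattern of a block-diagonal matrix is preserved under inversion.
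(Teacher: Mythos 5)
Your proof is correct and follows essentially the same route as the paper: the backward direction is Theorem~\ref{theorem:Aida} applied with singleton components, and the forward direction is the explicit one-parameter family with a single nonzero correlation $\sigma_{ij} \in (-1,1)$ on an edge $ij \in E_G \cap E_H$, which is exactly the matrix the paper displays just before the corollary. No gaps.
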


In other words, if $\Sigma$ is a symmetric positive definite
$(n \times n)$-matrix with the property that every off-diagonal
entry vanishes either in $\Sigma$ or in $\Sigma^{-1}$ (or
both), then $\Sigma$ is a diagonal matrix.  We have not found this
result in the literature.

\begin{remark}
  A natural question is whether the assumption of positive
  definiteness in \cref{theorem:Aida} is necessary.
  \Cref{ex:ZeroDimlModel} shows that \cref{c:trivialModel} does not
  hold for positive \emph{semi}-definite matrices, and
  \cref{ex:CounterexamplePrinReg} below shows that \cref{theorem:Aida}
  does not hold for principally regular matrices, that is, matrices
  whose principal minors do not vanish.  We do not know if
  \cref{c:trivialModel} holds in this case.
\end{remark}

\begin{remark}
  A simpler variant of \cref{theorem:Aida} can be proven by recursive
  direct sum decomposition and duality: To every pair of graphs
  $(G,H)$ on $N$ there exists a partition $V_1,\dotsc,V_r$ of $N$ such
  that
  \begin{enumerate}
  \item $G_i = G|_{V_i}$ and $H_i = H|_{V_i}$ are connected.
  \item $\M(G,H)$ is smooth if and only if all $\M(G_i,H_i)$ are smooth.
  \item $\M(G,H)$ is connected if and only if all $\M(G_i,H_i)$ are connected.
  \end{enumerate}
  The merit of this simpler assertion is that it does not require
  positive definiteness.  It also holds for principally regular models
  of~$\CIS{G,H}$ over $\CC$ because the proof uses only elementary
  operations on CI~relations introduced in \Cref{sec:MinorsEtc}.
\end{remark}
\begin{figure}[tpb]
  \centering
		\begin{tikzpicture}[scale=1.4]
		\tikzset{every node/.style={draw, circle, inner sep=2pt, thick}}
		\tikzset{every edge/.style={draw, ultra thick}}
		\tikzset{G/.style={green!70!black}}
		\tikzset{H/.style={blue!70!black}}
		
		\node (n1) at (1, 1) {$1$};
		\node (n2) at (1, 0) {$2$};
		\node (n3) at (0, 0) {$3$};
		\node (n4) at (3, 1) {$4$};
		\node (n5) at (3, 0) {$5$};
		\node (n6) at (4, 0) {$6$};
		
		\path (n1) edge (n2);
		\path (n2) edge (n3);
		\path (n3) edge (n1);
		
		\path (n4) edge (n5);
		\path (n5) edge (n6);
		\path (n6) edge (n4);
		
		\path (n1) edge [H] (n4);
		\path (n1) edge [H] (n5);
		\path (n3) edge [H, bend left=70] (n4);
		\path (n3) edge [H, bend right=40] (n6);
		
		\path (n1) edge [G, bend left=70] (n6);
		\path (n2) edge [G] (n4);
		\path (n2) edge [G] (n5);
		\path (n2) edge [G, bend right] (n6);
		\path (n3) edge [G, bend right] (n5);
		\end{tikzpicture}
                \caption{The graphs for
                  \cref{ex:CounterexamplePrinReg}.  The edges of
                  $G \cap H$ are drawn black, $G \setminus H$ in green
                  and $H \setminus G$ in blue. Then
                  $G \cap H = K_{123} \oplus K_{456}$ and
                  $G \cup H = K_{1\cdots 6}$.}
  \label{fig:greenblue}
\end{figure}
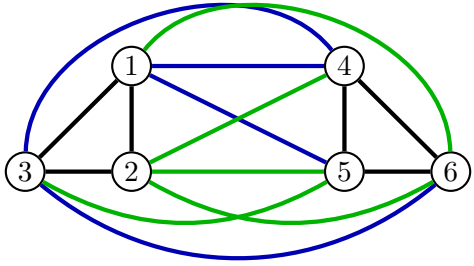

\begin{example}\label{ex:CounterexamplePrinReg}
  Consider the graph in \cref{fig:greenblue}.  We study the variety of
  $\CIS{G,H}$ in \texttt{Macaulay2}:
  \enlargethispage{\baselineskip}
  \begin{minted}{macaulay2}
	R = QQ[x11,x12,x13,x14,x15,x16,  x22,x23,x24,x25,x26,
	               x33,x34,x35,x36,          x44,x45,x46,
	                       x55,x56,                  x66]
	X = genericSymmetricMatrix(R,x11,6)

	-- Impose the relations from $H$ directly on the matrix
	X = sub(X, { x16=>0, x24=>0, x25=>0, x26=>0, x35=>0 })

	-- Pick an affine slice of the model which is likely to contain
	-- positive definite matrices by diagonal dominance
	X = sub(X, {
	  x11=>10, x22=>10, x33=>10, x44=>10, x55=>10, x66=>10,
	  x12=>1,  x13=>1,  x23=>1,  x45=>1,  x46=>1,  x56=>1
	})
	\end{minted}

	\[
	\begin{pmatrix}
	10     & 1  & 1      & x_{14} & x_{15} & 0      \\
	1      & 10 & 1      & 0      & 0      & 0      \\
	1      & 1  & 10     & x_{34} & 0      & x_{36} \\
	x_{14} & 0  & x_{34} & 10     & 1      & 1      \\
	x_{15} & 0  & 0      & 1      & 10     & 1      \\
	0      & 0  & x_{36} & 1      & 1      & 10
	\end{pmatrix}
	\]
	Some of the variables are specified to ensure quick
        termination of the following computations.
        If \cref{theorem:Aida} held for principally regular matrices,
        $x_{14}$, $x_{15}$, $x_{34}$ and $x_{36}$ would vanish on
        every principally regular matrix satisfying the equations
        of~$\CIS{G}$.

    \vskip 0.5em
	\begin{minted}{macaulay2}
	-- The relations imposed by $G$
	I = radical ideal(
	  det submatrix'(X, {0}, {3}),  -- $14$
	  det submatrix'(X, {0}, {4}),  -- $15$
	  det submatrix'(X, {2}, {3}),  -- $34$
	  det submatrix'(X, {2}, {5})   -- $36$
	)
	-- Saturation at each of the principal minors
	J = fold((I,f) -> I : f, I, subsets(numRows(X)) / (K -> det X_K^K))
	decompose J
	\end{minted}
	\begin{align*}
	& \ideal{x_{14},x_{15},x_{34},x_{36}} \\
	\cap\; & \ideal{1210 x_{14}^2-999,-11 x_{14}+x_{15},-x_{14}+x_{34},-11 x_{14}+x_{36}} \\
	\cap\; & \ideal{1210 x_{14}^2-981,-11 x_{14}+x_{15},x_{14}+x_{34},11 x_{14}+x_{36}}
	\end{align*}
	
	The first component has the desired block structure of
        $K_{123} \oplus K_{456}$, but the other components contain
        real points as well. Consider the last component. It consists
        of two real points:
	\begin{equation*}
          x_{14} = \pm \sqrt\frac{981}{1210},\quad
	x_{15} = 11 x_{14},\quad
	x_{34} = - x_{14},\quad
	x_{36} = -11 x_{14}.
	\end{equation*}
	This yields a real matrix satisfying the equations of
        $\CIS{G,H}$ and whose principal minors are non-zero. However,
        the determinant of the entire matrix equals
        $-\frac{4374}{55}$, which is not positive.
	
	This shows that $\CIS{G,H}$ has real, principally regular
        solutions without block-diagonal structure. The positive
        definite matrices in the affine slice $J$ of the model all
        fall into the first component and do have the block
        structure. A purely algebraic computation, without taking
        positive definiteness into account, would not be able to prove
        \cref{theorem:Aida}.
\end{example}
\subsection{Classification of the models
  \texorpdfstring{$\M(G,H)$}{M(G,H)} with
  \texorpdfstring{$|E_G \cap E_H| \leq 3$}{EG cap EH}}
\label{sec:classifyByCommonE}
\Cref{dim_model} bounds the model dimension in terms of
$|E_{G}\cap E_{H}|$.  We finish our analysis of the geometry of
$\M(G,H)$ with a classification of models with small intersections of the edge sets. In view of \cref{theorem:Aida} we can restrict to the cases where $E_G \cap E_H$ defines a connected graph on the subset of vertices of $N$ incident to some edge in $E_G \cap E_H$. For any $N' \subseteq N$, we also write $\PD_{N'}$ for the set of positive definite matrices with rows and columns indexed by $N'$. For disjoint subsets $N'$ and $N''$, a direct sum $\PD_{N'} \oplus \PD_{N''}$ indicates the set of block-diagonal positive definite matrices inside $\PD_{N' \cup N''}$ with the rows and columns of the two blocks indexed, respectively, by $N'$ and~$N''$, and similarly for $\PD_{N',1} \oplus \PD_{N'',1}$ if we restrict to ones on the diagonal.

\begin{proposition}\label{proposition:single_edge}
  Let $E_G \cap E_H = \set{ ij }$ consist of a single edge. Then
  \begin{equation*}
    \M(G,H) = \PD_{ij}.
  \end{equation*}
  In particular, $\M_1(G,H)$ is connected and smooth of the maximal dimension~$|E_G \cap E_H| = 1$.
\end{proposition}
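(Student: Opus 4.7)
The plan is to derive this proposition as a direct consequence of the decomposition theorem \cref{theorem:Aida}, followed by an explicit description of the $2 \times 2$ block corresponding to the edge $ij$.

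First, I would unpack the hypothesis: since $E_G \cap E_H = \set{ij}$, the graph $G \cap H$ on $N$ has exactly one non-trivial connected component, namely $\set{i,j}$, and all remaining vertices form singleton components. Applying \cref{theorem:Aida} to this partition gives
\[
\M(G,H) = \M(G|_{\set{i,j}}, H|_{\set{i,j}}) \oplus \bigoplus_{k \in N \setminus \set{i,j}} \M(G|_{\set{k}}, H|_{\set{k}}).
\]
Each singleton summand $\M(G|_{\set{k}}, H|_{\set{k}})$ is just $\PD_1 = \RR_{>0}$, and for the two-element summand, both $G|_{\set{i,j}}$ and $H|_{\set{i,j}}$ equal the complete graph $K_{\set{i,j}}$ because $ij$ is an edge of both. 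Hence $\M(G|_{\set{i,j}}, H|_{\set{i,j}}) = \PD_{\set{i,j}}$ is the full cone of $2 \times 2$ positive definite matrices indexed by $i,j$; no conditional independence constraint is imposed.

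Combining these, every $\Sigma \in \M(G,H)$ is block-diagonal with a single $2\times 2$ positive definite block on rows and columns $\set{i,j}$ and positive diagonal entries elsewhere, i.e., exactly the set denoted $\PD_{ij}$. For the properties of $\M_1(G,H)$: the block-diagonal shape after normalizing diagonals to one reduces $\PD_{ij}$ to a single $2 \times 2$ correlation block, parametrized by $\sigma_{ij} \in (-1,1)$. This is a smooth connected one-dimensional manifold, and $1 = |E_G \cap E_H|$ matches the upper bound from \cref{corollary:dim_model}, establishing maximality, smoothness, and connectedness simultaneously.

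The argument is essentially a bookkeeping exercise once \cref{theorem:Aida} is available; there is no real obstacle. The only point that requires minor care is verifying that the two-vertex subproblem collapses to the unconstrained PD cone, which is immediate from the definition of $\M(G,H)$ since both subgraphs contain the edge $ij$ and therefore impose no zero constraints on the $2 \times 2$ covariance or its inverse.
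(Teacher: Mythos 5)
Your proposal is correct and follows exactly the route the paper takes: its proof of this proposition is simply ``Immediate from \cref{theorem:Aida} and the definitions,'' and your write-up is a faithful (and more detailed) unpacking of that one-line argument. Nothing is missing; the decomposition into the $\set{i,j}$ block plus singletons and the observation that the two-vertex subproblem is unconstrained is precisely what the authors intend.
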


\begin{proof}
	Immediate from \cref{theorem:Aida} and the definitions.
\end{proof}

\begin{proposition}\label{proposition:classify}
  Let $|E_G \cap E_H| = 2$ and so $E_G \cap E_H = \set{ ij, jk }$ with
  distinct $i,j,k$.
  \begin{enumerate}
  \item\label{it:eqone} If $ik \in E_G \setminus E_H$, then
    $\M(G,H) = \M(G\cap H)^{-1}$ is an inverse graphical model.
  \item\label{it:eqtwo} In case $ik \in E_H \setminus E_G$, symmetrically $\M(G,H) = \M(G \cap H)$ is a graphical model.
  \item\label{it:eqthree} If $ik \not\in E_G \cup E_H$, then $\M_1(G,H)$ decomposes as
    \begin{equation*}
      \M_1(G,H) = \{ \mathbbm{1}_n + t E^{ij}: t \in (-1, 1) \} \cup \{ \mathbbm{1}_n + t E^{jk}: t \in (-1, 1) \}.
    \end{equation*}
    The Zariski closure of $\M_1(G,H)$ is a pair of lines intersecting
    in $\mathbbm{1}_n$.  Thus $\M_1(G,H)$ is connected of dimension
    one, with reducible Zariski closure.
  \end{enumerate}
\end{proposition}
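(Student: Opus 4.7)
The plan is to apply \cref{theorem:Aida} first and reduce every assertion to the three-vertex subproblem on $\{i,j,k\}$. Since $G\cap H$ consists of the path $i$--$j$--$k$ together with $n-3$ isolated vertices, the decomposition theorem gives
\[
  \M(G,H) \;=\; \M(G|_{ijk}, H|_{ijk}) \;\oplus\; \bigoplus_{v \in N \setminus \{i,j,k\}} \PD_1,
\]
so the extra summands only contribute diagonal entries (collapsing to $1$ after normalization) and do not affect any of the claimed structural properties. I would henceforth abuse notation and write $G,H$ for the restrictions $G|_{ijk}, H|_{ijk}$.

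For case~(1), the hypothesis $ik\in E_G$ means that the restricted $G$ is the complete graph $K_3$ and hence imposes no constraints on the concentration matrix. Therefore $\M(G,H) = \M(H)^{-1}$, and since $G\cap H$ restricted to $\{i,j,k\}$ equals $H$ in this case, one recovers $\M(G,H) = \M(G\cap H)^{-1}$. Case~(2) follows by applying case~(1) to the dual pair $(H,G)$ and transporting the result through the inversion diffeomorphism of \cref{l:SmoothnessInverse}, using the identity $\CIS{G,H}^{\dual} = \CIS{H,G}$ from \cref{l:DM-minorclosed}: since $\M(G,H)^{-1} = \M(H,G) = \M(G\cap H)^{-1}$, one concludes $\M(G,H) = \M(G\cap H)$.

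For case~(3) the decisive observation is that the restricted $H$ is the path $i$--$j$--$k$, hence a forest, so \cref{theorem:CI_ideal} applies. The only non-edge of $H$ is $ik$; the unique $H$-path connecting $i$ and $k$ is the whole path, and its endpoints $ik$ lie outside $E_G$ by hypothesis. The theorem then identifies the vanishing ideal as the squarefree monomial ideal
\[
  \ideal{\sigma_{ik},\,\sigma_{ij}\sigma_{jk}} \;=\; \ideal{\sigma_{ik},\sigma_{ij}} \,\cap\, \ideal{\sigma_{ik},\sigma_{jk}},
\]
whose two prime components correspond exactly to the coordinate planes yielding the two advertised open segments after intersecting with $\PD_{3,1}$. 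Connectedness is immediate because both segments pass through $\mathbbm{1}_n$, and the Zariski closure is the reducible union of two lines meeting at $\mathbbm{1}_n$, of dimension one. No step of this plan presents a genuine obstacle; the only subtleties are to check the forest hypothesis on the \emph{restricted} graph (which is obvious) and to keep track of the inert diagonal summands from \cref{theorem:Aida} when translating between $\M(G,H)$ and $\M_1(G,H)$.
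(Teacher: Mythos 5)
Your proof is correct and follows the same skeleton as the paper's: both arguments first invoke \cref{theorem:Aida} to reduce to the three-vertex block on $\set{i,j,k}$ and then settle the three cases by analyzing that $3\times 3$ problem. The only difference is in execution of the case analysis: the paper writes down $\adj(\Sigma)$ for the $3\times 3$ matrix explicitly, from which case (1) is immediate, case (2) is declared symmetric, and case (3) follows from reading off $\adj(\Sigma)_{ik}=\sigma_{ij}\sigma_{jk}$, whereas you obtain case (2) via the duality $\M(G,H)^{-1}=\M(H,G)$ together with \cref{l:SmoothnessInverse}, and case (3) by citing \cref{theorem:CI_ideal}, whose underlying path expansion of the almost-principal minor reduces to exactly the same $\sigma_{ij}\sigma_{jk}$ computation. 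Both routes are valid; yours trades a two-line adjugate computation for references to somewhat heavier general machinery, which is a matter of taste rather than substance.
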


\begin{proof}
  After a suitable permutation of $N$, we
  assume $ij = 12$ and $jk = 23$.  Any $\Sigma \in \M_1(G,H)$ has the
  block-diagonal form consisting of an upper left $(3 \times 3)$-block
  and an identity matrix. Therefore we can assume $N = 123$.
  Then, in the first case, we have
  \begin{equation*}
    \Sigma =
    \begin{pmatrix}
      1 & \sigma_{12} & 0 \\
      \sigma_{12} & 1 & \sigma_{23} \\
      0 & \sigma_{23} & 1
    \end{pmatrix},
    \hspace{5pt}
    \adj(\Sigma) =
    \begin{pmatrix}
      1 - \sigma_{23}^2 & -\sigma_{12} & \sigma_{12} \sigma_{23} \\
      -\sigma_{12} & 1 & -\sigma_{23} \\
      \sigma_{12} \sigma_{23} & -\sigma_{23} & 1 - \sigma_{12}^2
    \end{pmatrix}.
  \end{equation*}
  As $13 \in E_G$, we have that $G = K_3$ is complete, so there are no further
  restrictions and we obtain
  $\M(G,H) = \M(G \cap H)^{-1}$. In the second case, the same is true if we replace $\Sigma$ by $\Sigma^{-1}$ everywhere, so $\M(G,H) = \M(G \cap H)$. Finally, in the third case, $13 \not\in E_G \cup E_H$, so we additionally get $\sigma_{12} = 0$ or
  $\sigma_{23} = 0$, obtaining the union of two line segments, as desired.
\end{proof}

\begin{remark}
  Proposition~\ref{proposition:classify} shows that in some
  non-obvious cases double Markovian models are graphical or inverse
  graphical models.  This theme has occurred in the literature.  For
  example, in \cite[Proposition~12]{drton08a} it is shown that the
  only way that a covariance graph model $\M(K_{N},H)$ is a graphical
  model $\M(G,K_{N})$ is if covariance and concentration matrices have
  aligned block structures and the model is a product of $\PD$ cones
  (in particular, $G=H$ is a disjoint union of cliques).
\end{remark}

The same ideas also prove the following via direct computations.\footnote{\Cref{proposition:edgeIntersection3} differs slightly from the published version \href{https://doi.org/10.1111/sjos.12604}{\ttfamily doi:10.1111/sjos.12604} because the latter did not include the case where $E_G \cap E_H$ forms a star.}

\begin{proposition}\label{proposition:edgeIntersection3}
	Let $|E_G \cap E_H| = 3$. If $E_G \cap E_H = \set{ij,ik,jk}$ forms a
	$3$-clique, then $\M(G,H) = \PD_{ijk}$. Otherwise
	$E_G \cap E_H = \set{ij,jk,kl}$ with distinct $i,j,k,l$ forms a
	path or $E_G \cap E_H = \set{ij,ik,il}$ is a star.
	Up to swapping $G$ and $H$, we can restrict to the case where
	$H|_{ijkl}$ has equally many or more non-edges than~$G|_{ijkl}$ (i.e. at least as many prescribed zeros in the covariance matrix as in the concentration matrix).
	We can restrict moreover to $n = 4$ and assume $(i,j,k,l) = (1,2,3,4)$. If $E_G \cap E_H = \{12,23,34\}$ is a path, we have the following cases up to symmetry and inversion:
	\begin{enumerate}
		\item $E_H = E_G \cap E_H$ and $E_G = K_{1234}$. Here, $\M(G,H) = \M(G \cap H)^{-1}$.
		\item  $E_H = E_G \cap E_H$ and $E_G = K_{1234} \setminus \set{13}$. Here, $\M(G,H) = (\PD_{12} \oplus \PD_{34}) \cup \M(\set{23,34})^{-1}$.
		\item $E_H = E_G \cap E_H$ and $E_G = K_{1234} \setminus \set{14}$. Here, $\M(G,H) = (\PD_{12} \oplus \PD_{34}) \cup \M(\set{12,23})^{-1} \cup \M(\set{23,34})^{-1}$.
		\item $E_H = E_G \cap E_H$ and $E_G = K_{1234} \setminus \set{13,14}$. Here, $\M(G,H)$ is as in the previous case.
		\item $E_H = E_G \cap E_H$ and $E_G = K_{1234} \setminus \set{13,24}$. Here, $\M(G,H) = (\PD_{12} \oplus \PD_{34}) \cup \PD_{23}$.
		\item $E_G = E_H = E_G \cap E_H$. Here, $\M(G,H)$ is as in the previous case.
		\item $E_H = (E_G \cap E_H) \cup \set{13}$ and $E_G = K_{1234} \setminus \set{13}$. Here,
		\begin{equation*}
			\M_1(G,H) = \set{
				\begin{pmatrix}
					1 & \sigma_{12} & \sigma_{12}\sigma_{23} & 0 \\
					\sigma_{12} & 1 & \sigma_{23} & 0 \\
					\sigma_{12}\sigma_{23} & \sigma_{23} & 1 & \sigma_{34} \\
					0 & 0 & \sigma_{34} & 1
				\end{pmatrix}\colon
				\sigma_{12} \in (-1,1), \sigma_{23}^2 + \sigma_{34}^2 < 1
			}.
		\end{equation*}
		\item\label{item:smoothModelUnionNotComplete} $E_H = (E_G \cap E_H) \cup \set{13}$ and $E_G = K_{1234} \setminus \set{13,14}$. Here, $\M_1(G,H)$ is as in the previous case.
		\item $E_H = (E_G \cap E_H) \cup \set{13}$ and $E_G = K_{1234} \setminus \set{13,24}$. Here,
		\begin{align*}
			\M_1(G,H) &= (\PD_{\set{12},1} \oplus \PD_{\set{34},1}) \\
			&\cup \set{
				\begin{pmatrix}
					1 & \sigma_{12} & \sigma_{12}\sigma_{23} & 0 \\
					\sigma_{12} & 1 & \sigma_{23} & 0 \\
					\sigma_{12}\sigma_{23} & \sigma_{23} & 1 & 0 \\
					0 & 0 & 0 & 1
				\end{pmatrix}\colon
				\sigma_{12}, \sigma_{23} \in (-1,1)
			}.
		\end{align*}
		\item $E_H = (E_G \cap E_H) \cup \set{14}$ and $E_G = K_{1234} \setminus \set{14}$. Here,
		\begin{equation*}
			\M_1(G,H) = \set{
				\begin{pmatrix}
					1 & \sigma_{12} & 0 & \frac{-\sigma_{12} \sigma_{23} \sigma_{34}}{1-\sigma_{23}^2} \\
					\sigma_{12} & 1 & \sigma_{23} & 0 \\
					0 & \sigma_{23} & 1 & \sigma_{34} \\
					\frac{-\sigma_{12} \sigma_{23} \sigma_{34}}{1-\sigma_{23}^2} & 0 & \sigma_{34} & 1
				\end{pmatrix}\colon
				\sigma_{12}^2 + \sigma_{23}^2 < 1, \sigma_{23}^2 + \sigma_{34}^2 < 1
			}.
		\end{equation*}
		\item $E_H = (E_G \cap E_H) \cup \set{14}$ and $E_G = K_{1234} \setminus \set{13,14}$. Here,
		\begin{align*}
			\M_1(G,H) &= (\PD_{\set{12},1} \oplus \PD_{\set{34},1}) \cup \set{
				\begin{pmatrix}
					1 & 0 & 0 & 0 \\
					0 & 1 & \sigma_{23} & 0 \\
					0 & \sigma_{23} & 1 & \sigma_{34} \\
					0 & 0 & \sigma_{34} & 1
				\end{pmatrix}\colon
				\sigma_{23}^2 + \sigma_{34}^2 < 1
			}.
		\end{align*}
	\end{enumerate}
	If $E_G \cap E_H = \{12,13,14\}$ is a star, we have the following cases up to symmetry and inversion:
	\begin{enumerate}
		\item $E_H = E_G \cap E_H$ and $E_G = K_{1234}$. Here, $\M(G,H) = \M(G \cap H)^{-1}$.
		\item $E_H = E_G \cap E_H$ and $E_G = K_{1234} \setminus \{23\}$. Then
		\begin{align*}
			\M_1(G,H) &= \set{
				\begin{pmatrix}
					1 & 0 & \sigma_{13} & \sigma_{14} \\
					0 & 1 & 0 & 0 \\
					\sigma_{13} & 0 & 1 & 0 \\
					\sigma_{14} & 0 & 0 & 1
				\end{pmatrix}\colon
				\sigma_{13}^2 + \sigma_{14}^2 < 1} \\ &\cup \set{
				\begin{pmatrix}
					1 & \sigma_{12} & 0 & \sigma_{14} \\
					\sigma_{12} & 1 & 0 & 0 \\
					0 & 0 & 1 & 0 \\
					\sigma_{14} & 0 & 0 & 1
				\end{pmatrix}\colon
				\sigma_{12}^2 + \sigma_{14}^2 < 1},
		\end{align*}
		a union of two discs intersecting in a line segment.		
		\item $E_H = (E_G \cap E_H) \cup \{23\}$ and $E_G = K_{1234} \setminus \{23\}$. Then
		\begin{equation*}
			\M_1(G,H) = \set{
				\begin{pmatrix}
					1 & \sigma_{12} & \sigma_{13} & \sigma_{14} \\
					\sigma_{12} & 1 & \frac{\sigma_{12} \sigma_{13}}{1 - \sigma_{14}^2} & 0 \\
					\sigma_{13} & \frac{\sigma_{12} \sigma_{13}}{1 - \sigma_{14}^2}  & 1 & 0 \\
					\sigma_{14} & 0 & 0 & 1
				\end{pmatrix} \in \PD_4}.
		\end{equation*}
		\item $E_H = E_G \cap E_H$ and $E_G = K_{1234} \setminus \{23,24\}$. Then
		\begin{align*}
			\M_1(G,H) &= \set{
				\begin{pmatrix}
					1 & \sigma_{12} & 0 & 0 \\
					\sigma_{12} & 1 & 0 & 0 \\
					0 & 0 & 1 & 0 \\
					0 & 0 & 0 & 1
				\end{pmatrix}\colon
				\sigma_{12} \in (-1,1)} \\ &\cup \set{
				\begin{pmatrix}
					1 & 0 & \sigma_{13} & \sigma_{14} \\
					0 & 1 & 0 & 0 \\
					\sigma_{13} & 0 & 1 & 0 \\
					\sigma_{14} & 0 & 0 & 1
				\end{pmatrix}\colon
				\sigma_{13}^2 + \sigma_{14}^2 < 1}.
		\end{align*}
		\item $E_H = (E_G \cap E_H) \cup \{23\}$ and $E_G = K_{1234} \setminus \{23,24\}$. Then
		\begin{align*}
			\M_1(G,H) &= \set{
				\begin{pmatrix}
					1 & \sigma_{12} & \sigma_{13} & 0 \\
					\sigma_{12} & 1 & \sigma_{12} \sigma_{13} & 0 \\
					\sigma_{13} & \sigma_{12} \sigma_{13} & 1 & 0 \\
					0 & 0 & 0 & 1
				\end{pmatrix}\colon
				\sigma_{12}, \sigma_{13} \in (-1,1)} \\ &\cup \set{
				\begin{pmatrix}
					1 & 0 & \sigma_{13} & \sigma_{14} \\
					0 & 1 & 0 & 0 \\
					\sigma_{13} & 0 & 1 & 0 \\
					\sigma_{14} & 0 & 0 & 1
				\end{pmatrix}\colon
				\sigma_{13}^2 + \sigma_{14}^2 < 1}.
		\end{align*}
		\item $E_G = E_H = E_G \cap E_H$. Then $\M_1(G,H)$ is the union of the three coordinate line segments parametrized by $\sigma_{12}$, $\sigma_{13}$ and $\sigma_{14}$, all contained in $(-1,1)$.
	\end{enumerate}
\end{proposition}

In particular, for $|E_G \cap E_H| \leq 3$ the models $\M(G,H)$ are
connected in the Euclidean topology. Moreover, $\M(G,H)$ is smooth if
and only if its Zariski closure is irreducible if and only if
$\dim(\M_1(G,H)) = |E_G \cap E_H|$ is maximal. However,
\Cref{ex:pathConnSingular} below shows that there exist irreducible
but singular double Markovian models on four
vertices. \Cref{proposition:edgeIntersection3} also shows that double
Markovian models need not be equi-dimensional.

\begin{remark}
  The preceding classification for $|E_G \cap E_H| \leq 3$ and
  computer-assisted computations on $n \leq 4$ vertices show that in
  this range for every smooth model $\M(G,H)$ with $G \cap H$
  connected, there exist graphs $G',H'$ such that
  $\M(G,H) = \M(G',H')$ and $G' \cup H' = K_N$, thus the smoothness of
  the geometric model follows from \cref{theorem:Piotr}.  It is
  unknown whether there are smooth models without this property.
\end{remark}

\begin{remark}
  Any classification project would profit from a complete list of
  models of double Markovian CI structures or equivalently the set of
  completions of relations $\CIS{G,H}$.  There is no efficient,
  combinatorial algorithm to compute the completion of $\CIS{G,H}$,
  although computing the completion can be reduced to invocations of
  the Positivstellensatz and thus quantifier eliminiation
  \cite[Chapter~4]{bochnak2013real}.
  One such project would be to classify smoothness of double Markovian
  models on small vertex sets.  Exploiting \Cref{theorem:Aida}, this
  reduces to multiple smoothness queries for smaller models.  It seems
  like a worthwhile computational challenge to compile a table of the
  pairs of small connected graphs which have a smooth model.  We
  determined that for $n = 3,4,5$ there are $4 + 55 + 2644$ pairs of
  connected graphs which induce pairwise inequivalent CI~structures
  modulo isomorphy and duality.
  For more information on computations see
  \url{https://gaussoids.de/doublemarkov.html}.
\end{remark}

\section{Examples, counterexamples, and conjectures}
\label{sec:manyExamples}

\begin{example} \label{ex:CI_vs_corr_model} Consider the double
  Markovian CI structure arising from
  $G = H = \graph4{Eij,Eik,Ejl,Ekl}$, namely
  $\set{(14|), (14|23),(23|), (23|14)}$.  A computation in
  \texttt{Macaulay2} shows that (the Zariski closure of) the model
  $\M(G,H)$ has three irreducible components while it was determined
  in~\cite[Example~4.1]{drton2010smoothness} that the correlation
  model $\M_1(G,H) = \M(G,H) \cap \PD_{4,1}$ has four.  Thus there are
  algebraic differences between $\M_{1}(G,H)$ and~$\M(G,H)$.
\end{example}
\begin{example}\label{ex:pathConnSingular}
  Continuing \Cref{ex:SelfdualNonsmooth}, suppose that $G = H$ is the
  complete graph on $N = 12\cdots n$ minus the edge $12$. These double
  Markovian models are singular at the identity matrix but the models
  of all proper minors of $\CIS{G,G}$ are smooth.
  According to \cite[Proposition~4.2]{drton2010smoothness} with
  $C_1 = \emptyset$ and $C_2 = N \setminus 12$, the singular locus
  of this model is again a Gaussian CI~model and it is described as a
  submodel of $\M(G,G)$ by the CI~statements
  \begin{align*}
    &\text{$(12|)$ and $(12|C_2)$ from $\CIS{G,G}$}, \\
    &\text{$(1j|)$ and $(2j|)$ for all $j \in C_2$.} \tag{$*$} \label{eq:pathConnSingular:marginals}
  \end{align*}
  All but one of them are simple zero constraints on the covariance
  matrix.  By Schur complement, the remaining almost-principal minor
  equals
  \begin{align*}
    \det \Sigma_{12|C_2} = (\sigma_{12} -
      \Sigma_{1,C_2} \cdot \Sigma_{C_2}^{-1} \cdot \Sigma_{C_2,2}) \det \Sigma_{C_{2}}.
  \end{align*}
  By all the marginal independence statements in
  \eqref{eq:pathConnSingular:marginals}, the vectors $\Sigma_{1,C_2}$
  and $\Sigma_{C_2,2}$ as well as the entry $\sigma_{12}$ are zero, so
  the right-hand side of this equality vanishes, and $(12|C_2)$ is
  implied by the marginal statements.  Thus, the singular locus is in
  fact a \emph{linear subspace} of $\Sym^2(\RR^n)$ of codimension
  $2n-3$, intersected with~$\PD_n$.  This shows that double Markovian
  models can have singular loci of arbitrarily large dimension.

  It is instructive to compute the concrete case of $n=4$. The maximal
  possible dimension of the correlation model in this case is
  $|E_G| = 5$.  However, $\M_1(G,G)$ is of dimension $4 < 5$. Indeed,
  the only conditions on a positive definite matrix
  $\Sigma \in \M_1(G,G)$ are $\sigma_{12} = 0$ and
  $(\Sigma^{\rm adj})_{12} = 0$, which, using $\sigma_{12} = 0$,
  writes as
  \begin{equation*}
    f \defas \sigma_{13} (\sigma_{24} \sigma_{34} - \sigma_{23}) +  \sigma_{14} (\sigma_{23} \sigma_{34} - \sigma_{24}) = 0.
  \end{equation*}
  This is an irreducible polynomial, so the Zariski closure of
  $\M_1(G,G)$, which can be viewed as the vanishing set of the above
  polynomial inside the affine space where we forget the variable
  $\sigma_{12}$, is irreducible. In this case, the ideal $\SCI_{G,G}$
  is prime even without saturation, and it coincides with
  $\mathcal{I}(\M_1(G,G)) = \ideal{\sigma_{12}, f}$. Moreover, we can
  see again that $\M_1(G,G)$ is connected: for every positive definite
  matrix $\Sigma$ satisfying $\sigma_{12} = 0$ and $f = 0$, scale all
  variables \emph{except} for $\sigma_{34}$ by some $\varepsilon$
  tending to $0$.  This preserves the two equations and establishes a
  path inside $\M_1(G,G)$ connecting $\Sigma$ to a matrix with only
  the entry $\sigma_{34} \in (-1,1)$ possibly non-zero. The set of
  these matrices is clearly connected. As computed above, in the
  singular locus all variables but $\sigma_{34}$ are forced to zero,
  showing that it is a line inside $\PD_4$. In particular, failure of
  smoothness is not always due to reducibility for double Markovian
  models.
\end{example}

In \Cref{ex:pathConnSingular}, we have $G = H =$ an almost complete graph,
where only one edge is missing. This model is singular at the identity
matrix and therefore shows that the sufficient condition for smoothness
in \Cref{theorem:Piotr}, namely that $G \cup H = K_N$, cannot be weakened.
The singular locus in this example is a submodel described by the occurrence
of additional CI~statements. However, this is not always the case, as
\cite[Example~4.3]{drton2010smoothness} discovered.

\begin{example}\label{ex:MGcapHTooSmall}
  Let $G = \graph4{Eik, Eil, Ejk,Ejl,Ekl}$ and
  $H = \graph4{Eij,Eik, Eil, Ejk,Ejl}$.  The model $\M_1(G,H)$ agrees
  with its inverse model up to permutation of~$N$.  Moreover,
  $G \cup H = K_4$ is the complete graph, so this model is smooth of
  the expected dimension $|E_G \cap E_H| = 4$.  However, neither
  $\M_1(G,H)$ nor its inverse lie in the graphical model
  $\M(G \cap H)$.  Indeed, for every $\Sigma \in \M_1(G,H)$ the
  condition $(\Sigma^{-1})_{12} = 0$ translates into
  $\sigma_{12} = \sigma_{13} \sigma_{23} + \sigma_{14} \sigma_{24}$,
  in particular
  $\sigma_{12}, \sigma_{13}, \sigma_{14}, \sigma_{23}, \sigma_{24}$
  can all be non-zero at the same time.  The same is true for the
  inverse model, as it arises from the permutation exchanging $1$ with
  $4$ and $2$ with~$3$.
\end{example}

\begin{example}\label{ex:ZeroDimlModel}
  Consider the disjoint graphs $G = \graph4{Eik,Ejl}$ and
  $H = \graph4{Eij,Ejk,Ekl}$. The semi-definite model with ones on the
  diagonal consists of the five points
  \begin{equation*}
    \begin{pmatrix}
      1 & 0 & 0 & 0 \\
      0 & 1 & 0 & 0 \\
      0 & 0 & 1 & 0 \\
      0 & 0 & 0 & 1
    \end{pmatrix}, \ \
    \begin{pmatrix}
      1 & \pm 1 & 0 & 0 \\
      \pm 1 & 1 & 0 & 0 \\
      0 & 0 & 1 & \mp 1 \\
      0 & 0 & \mp 1 & 1
    \end{pmatrix},
  \end{equation*}
  where the signs of the first two rows and last two rows agree,
  respectively, but are independent of each other.  Thus semi-definite
  models can be disconnected even in dimension zero.
\end{example}

We have no analogue of \Cref{ex:ZeroDimlModel} with positive definite
matrices, and semi-definite models with no restriction on the diagonal
are connected as they are star-shaped with respect to the zero matrix.
The question if double Markovian models are connected in general is
open.
\begin{conjecture}\label{conj:M1GHconnected}
  For any $G,H$, the models $\M(G,H)$ and $\M_1(G,H)$ are connected.
\end{conjecture}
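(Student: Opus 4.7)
The plan is to attempt a Hadamard-product deformation connecting every $\Sigma \in \M_1(G,H)$ to the identity matrix $\mathbbm{1}_n$. First, using \cref{theorem:Aida}, I would reduce to the case where $G \cap H$ is connected: when the components of $G \cap H$ are $V_1,\ldots,V_r$, every $\Sigma \in \M_1(G,H)$ is block-diagonal with blocks in $\M_1(G|_{V_i},H|_{V_i})$, so $\M_1(G,H)$ is homeomorphic to the Cartesian product of these smaller correlation models and connectedness of each factor forces connectedness of the whole. It then suffices to exhibit, for each $\Sigma \in \M_1(G,H)$, a continuous path in $\M_1(G,H)$ from $\Sigma$ to $\mathbbm{1}_n$.

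The natural candidate is $\Sigma^t \defas \Sigma \odot M_t$, where $M_t$ is a one-parameter family of positive semi-definite matrices with ones on the diagonal interpolating from $M_1$ equal to the all-ones matrix down to $M_0 = \mathbbm{1}_n$. By the Schur product theorem, $\Sigma^t$ is positive definite for every $t \in [0,1]$ and retains ones on the diagonal, and the zero constraints imposed on $\Sigma$ by the non-edges of $H$ are automatically preserved. The delicate issue is that the zeros of the concentration matrix $(\Sigma^t)^{-1}$ prescribed by non-edges of $G$ must remain zero along the path. By \cref{proposition:almostPrincipalMinorsViaPath}, each such constraint expresses the vanishing of a signed sum of path weights $\sigma_p \cdot \det \Sigma_{N \setminus V(p), N \setminus V(p)}$ over paths $p$ in $H$ joining fixed endpoints. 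Provided $M_t$ is chosen so that every path weight is rescaled by a common $t$-dependent factor, the whole sum remains zero; this is the mechanism driving the proof of \cref{thm:connected}, which exploits a cut vertex forcing a uniform power of $\varepsilon$ to appear in every path.

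The main obstacle is that, in general, two paths in $H$ with the same endpoints may traverse different edges, so no single entry-wise Hadamard scaling can equate their weights unless one exploits special structure of $H$. To push beyond this, one could try a multi-parameter deformation $\Sigma \odot M_{\mathbf{t}}$ and select a curve in a higher-dimensional parameter space along which every vanishing path sum continues to vanish. An alternative is induction on the cyclomatic number of $H$, with the forest case covered by \cref{corollary:coordinate_planes} and the inductive step having to absorb the creation of a new independent cycle. A third possibility is to study the Zariski closure of $\M_1(G,H)$ component by component, aiming to show that every irreducible component meets $\mathbbm{1}_n$; the singular examples \cref{ex:Nonsmooth} and \cref{ex:pathConnSingular} fit this pattern and suggest that the identity matrix is the correct basepoint for any star-shaped-type argument. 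In every approach the crux is reconciling the linear constraints imposed by $H$ with the nonlinear determinantal constraints imposed by $G$ along a positivity-preserving homotopy, and I expect this to be genuinely hard in the presence of multiple cycles in $H$ whose edges are not constrained to zero by $G$.
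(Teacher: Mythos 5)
This statement is an open conjecture in the paper, not a theorem: the authors explicitly write that ``the question if double Markovian models are connected in general is open,'' and they offer no proof --- only the partial results \cref{corollary:coordinate_planes}, \cref{thm:connected}, and \crefrange{proposition:single_edge}{proposition:edgeIntersection3}. Your proposal does not prove it either. What you have written is an accurate reconstruction of the strategy behind those partial results --- the reduction to connected $G \cap H$ via \cref{theorem:Aida}, the Hadamard deformation $\Sigma \odot M_t$ whose positivity is guaranteed by the Schur product theorem, and the observation via \cref{proposition:almostPrincipalMinorsViaPath} that the $G$-constraints survive the deformation precisely when every path in $H$ between the relevant endpoints picks up a uniform power of the scaling parameter (the cut-vertex hypothesis of \cref{thm:connected}) --- together with a correct identification of exactly where this breaks down. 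But identifying the obstruction is not the same as overcoming it.

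The gap is concrete: none of your three proposed continuations is carried out, and each one stalls at the same point. The multi-parameter Hadamard deformation $\Sigma \odot M_{\mathbf t}$ requires exhibiting a curve in parameter space along which several algebraically independent path sums all continue to vanish while positive definiteness is preserved; you give no construction and no reason such a curve exists. The induction on the cyclomatic number of $H$ has no stated inductive step --- adding an independent cycle to $H$ changes the almost-principal minors non-locally, and it is not clear what connectedness statement about the smaller model would imply connectedness of the larger one. The component-by-component approach would need to show that every Zariski-irreducible component of $\overline{\M_1(G,H)}$ meets $\mathbbm{1}_n$ \emph{and} that the positive definite points of each component form a connected set; neither is established, and \cref{ex:pathConnSingular} shows components can be singular in ways that defeat naive star-shapedness arguments. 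Since you yourself flag the multi-cycle case as ``genuinely hard,'' the honest assessment is that your text is a research plan matching the state of the art in the paper, not a proof of the conjecture.
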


The two statements in \cref{conj:M1GHconnected} are equivalent.
Moreover, $\M_1(G,H) \setminus \set{ \mathbbm{1}_n }$ can be
disconnected as the $1$-dimensional smooth case shows.  Connectedness
of $\M_1(G,H)$ implies:
\begin{conjecture}\label{conj:M1GHsmoothIfMaxDim}
  The model $\M_1(G,H)$ is smooth if it has the maximal dimension
  $|E_G \cap E_H|$. Moreover, if $\M_1(G,H)$ is smooth, its Zariski
  closure is irreducible.
\end{conjecture}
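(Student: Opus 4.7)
The plan is to deduce this statement from \cref{conj:M1GHconnected} (connectedness of $\M_1(G,H)$), which the paragraph immediately preceding the conjecture already singles out as sufficient. I would therefore adopt connectedness as a working hypothesis and derive each of the two assertions separately.

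For the first assertion, I would transport the maximal-dimension hypothesis from $\M_1(G,H)$ to $\M(G,H)$ using the diffeomorphism $\M(G,H) \cong \RR_{>0}^n \times \M_1(G,H)$ from the proof of \cref{l:SmoothnessCorrelation}. Since $\RR_{>0}^n$ is connected, so is $\M(G,H)$ under the hypothesis on $\M_1(G,H)$, and the dimension formula $\dim \M(G,H) = n + \dim \M_1(G,H)$ turns the maximality assumption into $\dim \M(G,H) = |E_G \cap E_H| + n$. Thus \cref{theorem:dim_smooth} applies to the unique connected component $\M(G,H)$ and yields its smoothness, which by \cref{l:SmoothnessCorrelation} is equivalent to the smoothness of $\M_1(G,H)$.

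For the second assertion, recall from \cref{def:smooth} that smoothness of a semi-algebraic set at a point $p$ requires $p$ to lie in a unique irreducible component of the Zariski closure, at which $p$ is additionally a smooth point. If $\M_1(G,H)$ is smooth, this uniqueness partitions it according to the irreducible component each point belongs to. The pieces of this partition are closed in $\M_1(G,H)$ in the Euclidean topology, and each one is non-empty since every irreducible component of the Zariski closure of a semi-algebraic set meets the set itself. Connectedness of $\M_1(G,H)$ therefore forces the partition to consist of a single piece, which is exactly irreducibility of $\overline{\M_1(G,H)}$.

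The real work in this strategy is of course proving \cref{conj:M1GHconnected}. The sufficient conditions gathered in \cref{corollary:coordinate_planes,thm:connected,proposition:single_edge,proposition:classify,proposition:edgeIntersection3} rely either on a monomial description of $\mathcal{I}(\M_1(G,H))$ (via \cref{theorem:CI_ideal}) or on explicit Hadamard-product homotopies shrinking the entries incident to a specially chosen vertex. Neither tool is likely to generalize wholesale: \cref{ex:Incomplete} shows that essentially new CI~consequences can emerge from multiple $H$-paths joining a non-edge of $G$, so one expects to need a more structural argument---perhaps a reduction via \cref{theorem:Aida} to the connected-intersection case, followed by an induction on the edges of $G \cup H$ that reintroduces them one at a time while simultaneously deforming a witnessing path in $\M_1(G,H)$. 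Controlling positive definiteness of the deformed matrices while enforcing the increasing list of almost-principal-minor equations will be the main technical obstacle.
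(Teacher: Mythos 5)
The statement you are asked to prove is left as a conjecture in the paper; the paper's entire ``proof'' is the one-line remark immediately preceding it that connectedness of $\M_1(G,H)$ (\cref{conj:M1GHconnected}) implies it. Your conditional derivation --- transporting maximal dimension to $\M(G,H)$ and invoking \cref{theorem:dim_smooth} and \cref{l:SmoothnessCorrelation} for smoothness, then partitioning the smooth connected model into the closed, pairwise disjoint, non-empty traces of the irreducible components of its Zariski closure for irreducibility --- is correct and spells out exactly the implication the paper has in mind, so, like the paper, you have reduced the statement to the open connectedness conjecture rather than resolved it.
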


\subsection*{Acknowledgements}
We thank Piotr Zwiernik for suggesting the problem and for helpful
discussions.  We also thank the organizers of the summer research
project ``Linear Spaces of Symmetric Matrices'' at MPI MiS Leipzig,
where this problem was first posed.  Aida Maraj suggested
\cref{theorem:Aida} to us as a generalization of a special case which
appeared in the first arXiv posting of this paper.  The first three
authors were supported by the Deutsche Forschungsgemeinschaft (DFG,
German Research Foundation) -- 314838170, GRK~2297 MathCoRe.  Frank
R\"ottger was supported by the Swiss National Science Foundation
(SNSF) -- grant 186858.

\bibliographystyle{amsplain}
\bibliography{question48}

\providecommand{\bysame}{\leavevmode\hbox to3em{\hrulefill}\thinspace}
\providecommand{\MR}{\relax\ifhmode\unskip\space\fi MR }
\providecommand{\MRhref}[2]{%
  \href{http://www.ams.org/mathscinet-getitem?mr=#1}{#2}
}
\providecommand{\href}[2]{#2}
\begin{thebibliography}{10}

\bibitem{BR2003}
Moulinath Banerjee and Thomas~S. Richardson, \emph{On a dualization of
  graphical {G}aussian models: A correction note}, Scandinavian Journal of
  Statistics \textbf{30} (2003), no.~4, 817--820.

\bibitem{bochnak2013real}
Jacek Bochnak, Michel Coste, and Marie-Fran{\c{c}}oise Roy, \emph{Real
  algebraic geometry}, vol.~36, Springer Science \& Business Media, 2013.

\bibitem{Gaussant}
Tobias Boege, \emph{{Gaussoids are two-antecedental approximations of
  {Gaussian} conditional independence structures}}, Ann. Math. Artif. Intell.
  (2021).

\bibitem{Geometry}
Tobias Boege, Alessio {D'Al{\`i}}, Thomas Kahle, and Bernd Sturmfels,
  \emph{{The Geometry of Gaussoids}}, Found.\ Comput.\ Math. \textbf{19}
  (2019), no.~4, 775--812.

\bibitem{conca1994divisor}
Aldo Conca, \emph{Divisor class group and canonical class of determinantal
  rings defined by ideals of minors of a symmetric matrix}, Archiv der
  Mathematik \textbf{63} (1994), no.~3, 216--224.

\bibitem{conca1994symmetric}
\bysame, \emph{{G}r\"{o}bner bases of ideals of minors of a symmetric matrix},
  Journal of Algebra \textbf{166} (1994), 406--421.

\bibitem{conca1994symmetricLadder}
\bysame, \emph{Symmetric ladders}, Nagoya Mathematical Journal \textbf{136}
  (1994), 35--56.

\bibitem{conca2019}
Aldo Conca and Volkmar Welker, \emph{{Lov{\'a}sz--Saks--Schrijver} ideals and
  coordinate sections of determinantal varieties}, Algebra \& Number Theory
  \textbf{13} (2019), no.~2, 455--484.

\bibitem{CW93}
David~R. Cox and Nanny Wermuth, \emph{Linear dependencies represented by chain
  graphs}, Statistical Science \textbf{8} (1993), no.~3, 204 -- 218.

\bibitem{csiszar2011information}
Imre Csisz{\'a}r and J{\'a}nos K{\"o}rner, \emph{Information theory: coding
  theorems for discrete memoryless systems}, Cambridge University Press, 2011.

\bibitem{drton2009likelihood}
Mathias Drton, \emph{Likelihood ratio tests and singularities}, The Annals of
  Statistics \textbf{37} (2009), no.~2, 979--1012.

\bibitem{DR2002}
Mathias Drton and Thomas~S. Richardson, \emph{A new algorithm for maximum
  likelihood estimation in {G}aussian graphical models for marginal
  independence}, Proceedings of the Nineteenth Conference on Uncertainty in
  Artificial Intelligence (San Francisco, CA, USA), UAI'03, Morgan Kaufmann
  Publishers Inc., 2002, p.~184–191.

\bibitem{drton08a}
\bysame, \emph{Graphical methods for efficient likelihood inference in
  {G}aussian covariance models}, Journal of Machine Learning Research
  \textbf{9} (2008), no.~29, 893--914.

\bibitem{drton2010smoothness}
Mathias Drton and Han Xiao, \emph{Smoothness of {G}aussian conditional
  independence models}, Contemp. Math \textbf{516} (2010), 155--177.

\bibitem{TFP-II}
Alexander Engstr{\"o}m, Thomas Kahle, and Seth Sullivant, \emph{Multigraded
  commutative algebra of graph decompositions}, Journal of Algebraic
  Combinatorics \textbf{39} (2014), no.~2, 335--372.

\bibitem{EvansSmoothness}
Robin~J. Evans, \emph{{Smoothness of marginal log-linear parameterizations}},
  {Electron. J. Stat.} \textbf{9} (2015), no.~1, 475--491.

\bibitem{ForcinaSmoothness}
Antonio Forcina, \emph{Smoothness of conditional independence models for
  discrete data}, {J. Multivariate Anal.} \textbf{106} (2012), 49--56.

\bibitem{geyer1994asymptotics}
Charles~J. Geyer, \emph{On the asymptotics of constrained $ m $-estimation},
  The Annals of Statistics \textbf{22} (1994), no.~4, 1993--2010.

\bibitem{M2}
Daniel~R. Grayson and Michael~E. Stillman, \emph{{M}acaulay2, a software system
  for research in algebraic geometry}, Available at
  \url{http://www.math.uiuc.edu/Macaulay2/}.

\bibitem{guillemin2010differential}
Victor Guillemin and Alan Pollack, \emph{{Differential topology}}, reprint of
  the 1974 original ed., AMS Chelsea Publishing, 2010.

\bibitem{jones2005covariance}
Beatrix Jones and Mike West, \emph{Covariance decomposition in undirected
  {G}aussian graphical models}, Biometrika \textbf{92} (2005), no.~4, 779--786.

\bibitem{kaced2013conditional}
Tarik Kaced and Andrei Romashchenko, \emph{Conditional information inequalities
  for entropic and almost entropic points}, IEEE transactions on information
  theory \textbf{59} (2013), no.~11, 7149--7167.

\bibitem{kahle2014positive}
Thomas Kahle, Johannes Rauh, and Seth Sullivant, \emph{Positive margins and
  primary decomposition}, Journal of Commutative Algebra \textbf{6} (2014),
  no.~2, 173--208.

\bibitem{kauermann96}
Göran Kauermann, \emph{On a dualization of graphical {G}aussian models},
  Scandinavian Journal of Statistics \textbf{23} (1996), no.~1, 105--116.

\bibitem{kemper2011CommAlg}
Gregor Kemper, \emph{A course in commutative algebra}, Graduate Texts in
  Mathematics, vol. 256, Springer, Heidelberg, 2011. \MR{2766370}

\bibitem{lauritzen1996graphical}
Steffen Lauritzen, \emph{Graphical models}, vol.~17, Oxford University Press,
  1996.

\bibitem{lauritzen2020locally}
Steffen Lauritzen and Piotr Zwiernik, \emph{Locally associated graphical
  models}, 2020,
  {\ttfamily\href{https://arxiv.org/abs/2008.04688}{arXiv:2008.04688}}.

\bibitem{LM07}
Radim Ln{\v{e}}ni{\v{c}}ka and Franti{\v{s}}ek Mat{\'u}{\v{s}}, \emph{On
  {G}aussian conditional independence structures}, Kybernetika \textbf{43}
  (2007), no.~3, 327--342.

\bibitem{matus94}
Franti{\v{s}}ek Mat{\'u}{\v{s}}, \emph{Probabilistic conditional independence
  structures and matroid theory: {B}ackground}, Int. J. Gen. Syst. \textbf{22}
  (1994), no.~2, 185--196.

\bibitem{MatusMinors}
\bysame, \emph{Conditional independence structures examined via minors}, Ann.\
  Math.\ Artif.\ Intell. \textbf{21} (1997), no.~1, 99--30.

\bibitem{MatusGaussian}
\bysame, \emph{Conditional independences in {G}aussian vectors and rings of
  polynomials}, Conditionals, Information, and Inference (Gabriele
  Kern-Isberner, Wilhelm R{\"o}dder, and Friedhelm Kulmann, eds.), Springer,
  2005, pp.~152--161.

\bibitem{MatusLogConvex}
\bysame, \emph{{On conditional independence and log-convexity}}, {Ann. Inst.
  Henri Poincar\'e, Probab. Stat.} \textbf{48} (2012), no.~4, 1137--1147.

\bibitem{misra2020gaussian}
Pratik Misra and Seth Sullivant, \emph{{G}aussian graphical models with toric
  vanishing ideals}, Annals of the Institute of Statistical Mathematics
  \textbf{73} (2021), 757--785.

\bibitem{pearl1994can}
Judea Pearl and Nanny Wermuth, \emph{When can association graphs admit a causal
  interpretation?}, Selecting Models from Data, Springer, 1994, pp.~205--214.

\bibitem{SK86}
Terence~P. Speed and Harri~T. Kiiveri, \emph{{G}aussian {M}arkov distributions
  over finite graphs}, The Annals of Statistics \textbf{14} (1986), no.~1, 138
  -- 150.

\bibitem{studeny92:_condit}
Milan Studen{\'y}, \emph{Conditional independence relations have no finite
  complete characterization}, Information Theory, Statistical Decision
  Functions and Random Processes. Transactions of the 11th Prague Conference
  (S.~Kubik and J.A. Visek, eds.), vol.~B, Kluwer, Dordrecht, 1992,
  pp.~377--396.

\bibitem{sullivant07:_toric}
Seth Sullivant, \emph{Toric fiber products}, J. Algebra \textbf{316} (2007),
  no.~2, 560--577.

\bibitem{sullivant2009gaussian}
\bysame, \emph{{G}aussian conditional independence relations have no finite
  complete characterization}, Journal of Pure and Applied Algebra \textbf{213}
  (2009), no.~8, 1502--1506.

\bibitem{SullivantBook}
\bysame, \emph{Algebraic statistics}, Graduate Studies in Mathematics, vol.
  194, American Mathematical Society, 2018.

\bibitem{sundberg2019}
Rolf Sundberg, \emph{Statistical modelling by exponential families}, Institute
  of Mathematical Statistics Textbooks, Cambridge University Press, 2019.

\bibitem{uhlerSurvey}
Caroline Uhler, \emph{{G}aussian graphical models: An algebraic and geometric
  perspective}, Handbook of Graphical Models (Mathias Drton, Steffen Lauritzen,
  Marloes Maathuis, and Martin Wainwright, eds.), Chapman Hall, 2019.

\end{thebibliography}

\bigskip \medskip

\noindent
\footnotesize {\bf Authors' addresses:}

\smallskip

\noindent Tobias Boege, MPI-MiS Leipzig, Germany,
{\tt post@taboege.de}

\noindent Thomas Kahle, OvGU Magdeburg, Germany,
{\tt thomas.kahle@ovgu.de}

\noindent Andreas Kretschmer, OvGU Magdeburg, Germany,
{\tt andreas.kretschmer@ovgu.de}

\noindent Frank Röttger, Université de Genève, Switzerland,
{\tt frank.roettger@unige.ch}

\end{document}